\DeclareFontFamily{U}{rsfs}{} 
\DeclareFontShape{U}{rsfs}{n}{it}{<->
rsfs10}{} \DeclareSymbolFont{mscr}{U}{rsfs}{n}{it}
\DeclareSymbolFontAlphabet{\scr}{mscr}
\def\mathscr{\scr}
\begin{document}
\def\e#1\e{\begin{equation}#1\end{equation}}
\def\ea#1\ea{\begin{align}#1\end{align}}
\def\eq#1{{\rm(\ref{#1})}}
\theoremstyle{plain}
\newtheorem{thm}{Theorem}[section]
\newtheorem{lem}[thm]{Lemma}
\newtheorem{prop}[thm]{Proposition}
\newtheorem{cor}[thm]{Corollary}
\theoremstyle{definition}
\newtheorem{dfn}[thm]{Definition}
\newtheorem{ex}[thm]{Example}
\newtheorem{rem}[thm]{Remark}
\numberwithin{figure}{section}
\numberwithin{equation}{section}
\def\dim{\mathop{\rm dim}\nolimits}
\def\codim{\mathop{\rm codim}\nolimits}
\def\vdim{\mathop{\rm vdim}\nolimits}
\def\Re{\mathop{\rm Re}\nolimits}
\def\Im{\mathop{\rm Im}\nolimits}
\def\det{\mathop{\rm det}\nolimits}
\def\Ker{\mathop{\rm Ker}}
\def\Coker{\mathop{\rm Coker}}
\def\HSta{\mathop{\bf HSta}\nolimits}
\def\DSta{\mathop{\bf DSta}\nolimits}
\def\Spec{\mathop{\rm Spec}}
\def\Perf{\mathop{\rm Perf}\nolimits}
\def\Aut{\mathop{\rm Aut}}
\def\End{\mathop{\rm End}}
\def\Map{{\mathop{\rm Map}\nolimits}}
\def\Topho{{\mathop{\bf Top^{ho}}}}
\def\Ho{\mathop{\rm Ho}}
\def\Pd{\mathop{\rm Pd}}
\def\Gr{\mathop{\rm Gr}}
\def\PGL{\mathop{\rm PGL}}
\def\GL{\mathop{\rm GL}}
\def\SL{\mathop{\rm SL}}
\def\SO{\mathop{\rm SO}}
\def\SU{\mathop{\rm SU}}
\def\Sp{\mathop{\rm Sp}}
\def\Spin{\mathop{\rm Spin}}
\def\Spinc{\mathop{\rm Spin^c}}
\def\Tr{\mathop{\rm Tr}}
\def\U{{\mathbin{\rm U}}}
\def\vol{\mathop{\rm vol}}
\def\inc{\mathop{\rm inc}}
\def\ran{{\rm an}}
\def\cs{{\rm cs}}
\def\ind{\mathop{\rm ind}\nolimits}
\def\tind{{\text{\rm t-ind}}}
\def\Pic{\mathop{\rm Pic}}
\def\Stab{\mathop{\rm Stab}\nolimits}
\def\Hol{\mathop{\rm Hol}}
\def\diag{\mathop{\rm diag}}
\def\Or{\mathop{\rm Or}}
\def\rank{\mathop{\rm rank}\nolimits}
\def\IndSch{\mathop{\bf IndSch}\nolimits}
\def\Sta{\mathop{\bf Sta}\nolimits}
\def\Hom{\mathop{\rm Hom}\nolimits}
\def\Ext{\mathop{\rm Ext}\nolimits}
\def\cExt{\mathop{{\mathcal E}\mathit{xt}}\nolimits}
\def\id{{\mathop{\rm id}\nolimits}}
\def\Id{{\mathop{\rm Id}\nolimits}}
\def\Sch{\mathop{\bf Sch}\nolimits}
\def\TopSta{{\mathop{\bf TopSta}\nolimits}}
\def\Ad{\mathop{\rm Ad}}
\def\top{{\rm top}}
\def\cla{{\rm cla}}
\def\irr{{\rm irr}}
\def\red{{\rm red}}
\def\virt{{\rm virt}}
\def\coh{{\rm coh}}
\def\Top{{\mathop{\bf Top}\nolimits}}
\def\ul{\underline}
\def\bs{\boldsymbol}
\def\db{\bar\partial}
\def\ge{\geqslant}
\def\le{\leqslant\nobreak}
\def\boo{{\mathbin{\mathbbm 1}}}
\def\O{{\mathcal O}}
\def\bA{{\mathbin{\mathbb A}}}
\def\bG{{\mathbin{\mathbb G}}}
\def\bL{{\mathbin{\mathbb L}}}
\def\bT{{\mathbin{\mathbb T}}}
\def\H{{\mathbin{\mathbb H}}}
\def\R{{\mathbin{\mathbb R}}}
\def\Z{{\mathbin{\mathbb Z}}}
\def\Q{{\mathbin{\mathbb Q}}}
\def\N{{\mathbin{\mathbb N}}}
\def\C{{\mathbin{\mathbb C}}}
\def\CP{{\mathbin{\mathbb{CP}}}}
\def\HP{{\mathbin{\mathbb{HP}}}}
\def\RP{{\mathbin{\mathbb{RP}}}}
\def\A{{\mathbin{\cal A}}}
\def\G{{\mathbin{\cal G}}}
\def\M{{\mathbin{\cal M}}}
\def\B{{\mathbin{\cal B}}}
\def\ovB{{\mathbin{\smash{\,\overline{\!\mathcal B}}}}}
\def\cC{{\mathbin{\cal C}}}
\def\cD{{\mathbin{\cal D}}}
\def\cE{{\mathbin{\cal E}}}
\def\cF{{\mathbin{\cal F}}}
\def\cG{{\mathbin{\cal G}}}
\def\cH{{\mathbin{\cal H}}}
\def\E{{\mathbin{\cal E}}}
\def\F{{\mathbin{\cal F}}}
\def\cG{{\mathbin{\cal G}}}
\def\cH{{\mathbin{\cal H}}}
\def\cI{{\mathbin{\cal I}}}
\def\cJ{{\mathbin{\cal J}}}
\def\cK{{\mathbin{\cal K}}}
\def\cL{{\mathbin{\cal L}}}
\def\cN{{\mathbin{\cal N}\kern .04em}}
\def\cP{{\mathbin{\cal P}}}
\def\cQ{{\mathbin{\cal Q}}}
\def\cR{{\mathbin{\cal R}}}
\def\cS{{\mathbin{\cal S}}}
\def\T{{{\cal T}\kern .04em}}
\def\cU{{\mathbin{\cal U}}}
\def\cV{{\cal V}}
\def\cW{{\mathbin{\cal W}}}
\def\cX{{\cal X}}
\def\cY{{\cal Y}}
\def\cZ{{\cal Z}}
\def\bcM{{\mathbin{\bs{\cal M}}}}
\def\g{{\mathfrak g}}
\def\bS{{\bs S}}
\def\ovY{\kern .04em\overline{\kern -.04em Y}\kern -.2em{}}
\def\al{\alpha}
\def\be{\beta}
\def\ga{\gamma}
\def\de{\delta}
\def\io{\iota}
\def\ep{\epsilon}
\def\la{\lambda}
\def\ka{\kappa}
\def\th{\theta}
\def\ze{\zeta}
\def\up{\upsilon}
\def\vp{\varphi}
\def\si{\sigma}
\def\om{\omega}
\def\De{\Delta}
\def\Ka{{\rm K}}
\def\La{\Lambda}
\def\Om{\Omega}
\def\Ga{\Gamma}
\def\Si{\Sigma}
\def\Th{\Theta}
\def\Up{\Upsilon}
\def\Chi{{\rm X}}
\def\Tau{{\rm T}}
\def\Nu{{\rm N}}
\def\pd{\partial}
\def\ts{\textstyle}
\def\st{\scriptstyle}
\def\sst{\scriptscriptstyle}
\def\w{\wedge}
\def\sm{\setminus}
\def\lt{\ltimes}
\def\bu{\bullet}
\def\sh{\sharp}
\def\di{\diamond}
\def\he{\heartsuit}
\def\od{\odot}
\def\op{\oplus}
\def\ot{\otimes}
\def\bt{\boxtimes}
\def\ov{\overline}
\def\bigop{\bigoplus}
\def\bigot{\bigotimes}
\def\iy{\infty}
\def\es{\emptyset}
\def\ra{\rightarrow}
\def\rra{\rightrightarrows}
\def\Ra{\Rightarrow}
\def\Longra{\Longrightarrow}
\def\ab{\allowbreak}
\def\longra{\longrightarrow}
\def\hookra{\hookrightarrow}
\def\dashra{\dashrightarrow}
\def\lb{\llbracket}
\def\rb{\rrbracket}
\def\ha{{\ts\frac{1}{2}}}
\def\t{\times}
\def\ci{\circ}
\def\ti{\tilde}
\def\d{{\rm d}}
\def\md#1{\vert #1 \vert}
\def\ms#1{\vert #1 \vert^2}
\def\bmd#1{\big\vert #1 \big\vert}
\def\an#1{\langle #1 \rangle}
\pagenumbering{roman}
\title{Erratum to `Orientability of moduli spaces of $\Spin(7)$-instantons and coherent sheaves on Calabi--Yau 4-folds'}
\author{Yalong Cao, Jacob Gross and Dominic Joyce}
\date{arXiv:1811.09658. \\ 
Published as Advances in Mathematics 368 (2020), no. 107134}
\maketitle

\phantomsection
\label{erratum}

The authors regret that there is a mistake in the proof of Theorem \ref{ss1thm1} of this paper, and the theorem itself is false. The mistake in the proof of Theorem \ref{ss1thm1} comes in \S\ref{ss24}, where we claim that the natural map $\pi_5(\SU(4))\cong\Z\ra H_5(\SU(4),\Z)\cong\Z$ is an isomorphism. However, one can prove using Bott \cite[\S 8, final corollary]{Bott} that this map is $24\cdot-:\Z\ra\Z$. Corollaries \ref{ss1cor1} and \ref{ss1cor2} of this paper are deduced from Theorem \ref{ss1thm1}, so they may also be false, though we do not have counterexamples. The authors would like to apologize for all this.

The third author and Markus Upmeier \cite{JoUp2} have developed a new theory for studying orientability and canonical orientations of moduli spaces using `bordism categories'. The mistake in Theorem \ref{ss1thm1} of this paper became clear because it contradicted some of the results of \cite{JoUp2}. In \cite[Th.~12.6(b)]{JoUp2}, they prove the following corrected version of Theorem \ref{ss1thm1} of this paper:
\smallskip

\noindent{\bf Theorem 1.11$'$ (Joyce--Upmeier \cite{JoUp2})} {\it Let\/ $X$ be a compact, oriented, spin Riemannian $8$-manifold, satisfying the condition
\begin{itemize}
\setlength{\itemsep}{0pt}
\setlength{\parsep}{0pt}
\item[$(*)$] There does not exist a class $\al\in H^3(X,\Z)$ such that\/ $\int_X\bar\al\cup\mathop{\rm Sq}^2(\bar\al)=\ul{1}$ in $\Z_2,$ where $\bar\al\in H^3(X,\Z_2)$ is the mod\/ $2$ reduction of\/ $\al,$ and\/ $\mathop{\rm Sq}^2(\bar\al)$ in $H^5(X,\Z_2)$ is its Steenrod square.
\end{itemize}
Let\/ $E_\bu$ be the positive Dirac operator $\slashed{D}_+:\Ga^\iy(S_+)\ra\Ga^\iy(S_-)$ on $X$ in Definition\/ {\rm\ref{ss1def2}}. Suppose\/ $P\ra X$ is a principal\/ $G$-bundle for $G=\U(m)$ or $\SU(m)$. Then $\B_P$ is orientable, that is, $O_P^{E_\bu}\ra\B_P$ is a trivializable principal\/ $\Z_2$-bundle.}

\smallskip

Here condition $(*)$ does not appear in Theorem \ref{ss1thm1} of this paper.

In \cite[Ex.~12.8]{JoUp2}, Joyce--Upmeier show that if $X$ is the compact, oriented, spin 8-manifold $\SU(3)$, which does not satisfy $(*)$, and $P=X\t\SU(3)\ra X$ is the trivial principal $\SU(3)$-bundle, then $\B_P$ is not orientable. This is a counterexample to the original statement of Theorem \ref{ss1thm1}.

Condition $(*)$ is a sufficient condition on $X$ for orientability of $\B_P$ in Theorem 1.11$'$, but it may not be be necessary. The theory of \cite{JoUp2} enables us to give complicated necessary and sufficient conditions for $\B_P$ to be orientable, in terms of spin bordism groups of certain classifying spaces, see \cite{JoUp2} for details.

Using Theorem 1.11$'$ instead of Theorem \ref{ss1thm1}, we obtain the following corrected versions of 
Corollaries \ref{ss1cor1} and \ref{ss1cor2}. (The last parts of Corollary \ref{ss1cor2}, which we have not reproduced, also hold.)
\smallskip

\noindent{\bf Corollary 1.12$'$} {\it Let\/ $(X,\Om,g)$ be a compact\/ $\Spin(7)$-manifold satisfying condition $(*)$ above. Then for any principal\/ $G$-bundle\/ $P\ra X$ for\/ $G=\U(m)$ or\/ $\SU(m),$ the moduli space\/ $\M_P^{\Spin(7)}$ of\/ $\Spin(7)$-instantons on $P$ is orientable, as a manifold or derived manifold.}
\medskip

\noindent{\bf Corollary 1.17$'$} {\it Let\/ $(X,\th)$ be an algebraic Calabi--Yau\/ $4$-fold satisfying condition $(*)$ above. Then the Borisov--Joyce orientation bundle $O^\om\ra\M$ is a trivializable algebraic principal\/ $\Z_2$-bundle, i.e.\ $\M$ is orientable.}
\smallskip

We will not correct the proof of Theorem \ref{ss1thm1} here, as this would take many pages, so we refer interested readers to Joyce--Upmeier \cite{JoUp2}. But we do not wish to withdraw the paper, as it is already published in Advances in Mathematics, and because Theorem \ref{ss1thm2} of this paper (which is unaffected by the mistake in Theorem \ref{ss1thm1}), is very useful, particularly in combination with Theorem 1.11$'$.

Instead, in the version of the paper which follows this Erratum, the text of the paper (including mistakes) is unchanged, \color{red}but the problematic parts of the paper are highlighted in red, like this, \color{black}and the problems are explained in new footnotes in red.\footnote{\color{red}Like this footnote.\color{black}}

\renewcommand{\refname}{References for the Erratum}

\renewcommand{\refname}{References}
\newpage

\pagenumbering{arabic}
\setcounter{page}{1}
\title{Orientability of moduli spaces of $\Spin(7)$-instantons and \\ coherent sheaves on Calabi--Yau 4-folds}
\author{Yalong Cao, Jacob Gross and Dominic Joyce}
\date{}
\maketitle

\begin{abstract}  Suppose $(X,\Om,g)$ is a compact $\Spin(7)$-manifold, e.g.\ a Riemannian 8-manifold with holonomy $\Spin(7)$, or a Calabi--Yau 4-fold. Let $G$ be $\U(m)$ or $\SU(m)$, and $P\ra X$ be a principal $G$-bundle. We show that the infinite-dimensional moduli space $\B_P$ of all connections on $P$ modulo gauge is orientable, in a certain sense. We deduce that the moduli space $\M_P^{\Spin(7)}\subset\B_P$ of irreducible $\Spin(7)$-instanton connections on $P$ modulo gauge, as a manifold or derived manifold, is orientable. This improves theorems of Cao and Leung \cite{CaLe2} and Mu\~noz and Shahbazi~\cite{MuSh}.

If $X$ is a Calabi--Yau 4-fold, the derived moduli stack $\bs\M$ of (complexes of) coherent sheaves on $X$ is a $-2$-shifted symplectic derived stack $(\bs\M,\om)$ by Pantev--To\"en--Vaqui\'e--Vezzosi \cite{PTVV}, and so has a notion of orientation by Borisov--Joyce \cite{BoJo}. We prove that $(\bs\M,\om)$ is orientable, by relating algebro-geometric orientations on $(\bs\M,\om)$ to differential-geometric orientations on $\B_P$ for $\U(m)$-bundles $P\ra X$, and using orientability of~$\B_P$.

This has applications to defining Donaldson--Thomas type invariants counting semistable coherent sheaves on a Calabi--Yau 4-fold, as in Donaldson and Thomas \cite{DoTh}, Cao and Leung \cite{CaLe1}, and Borisov and Joyce~\cite{BoJo}.
\end{abstract}

\setcounter{tocdepth}{2}
\tableofcontents

\section{Introduction}
\label{ss1}

Suppose $(X,g)$ is a compact, connected, oriented, spin Riemannian 8-manifold, with positive Dirac operator $\slashed{D}_+:\Ga^\iy(S_+)\ra\Ga^\iy(S_-)$, an elliptic operator on $X$ that we write as $E_\bu$ for short. Let $G=\U(m)$ or $\SU(m)$ and $P\ra X$ be a principal $G$-bundle. Write $\A_P$ for the infinite-dimensional affine space of connections $\nabla_P$ on $P$. The gauge group $\G_P=\Aut(P)$ acts on $\A_P$, and the centre $Z(G)\subset\cG$ acts trivially. We call $\nabla_P$ {\it irreducible\/} if $\Stab_{\G_P}(\nabla_P)=Z(G)$, and write $\A_P^\irr\subset\A_P$ for the subset of irreducible connections.

We write $\B_P=[\A_P/\G_P]$ for the moduli space of gauge equivalence classes of connections on $P$, considered as a {\it topological stack\/} in the sense of \cite{Metz,Nooh1,Nooh2}. Write $\B_P^\irr=[\A_P^\irr/\G_P]$ for the substack $\B_P^\irr\subseteq\B_P$ of irreducible connections. We also define variations $\ovB_P=[\A_P/(\G_P/Z(G))]$, $\ovB_P^\irr=[\A_P^\irr/(\G_P/Z(G))]$. As $\G_P/Z(G)$ acts freely on $\A_P^\irr$, we may consider $\ovB_P^\irr$ as a topological space (which is an example of a topological stack). There are natural morphisms $\Pi_P:\B_P\ra\ovB_P$, $\Pi_P^\irr:\B^\irr_P\ra\ovB^\irr_P$  of topological stacks, fibrations with fibre~$[*/Z(G)]$.

For each $[\nabla_P]\in\B_P$ we have a twisted operator $\slashed{D}_+^{\nabla_P}:\Ga^\iy(\Ad(P)\ot S_+)\ra\Ga^\iy(\Ad(P)\ot S_-)$ on $X$, a continuous family of elliptic operators over the base $\B_P$. Thus as in Atiyah and Singer \cite{AtSi} we have a family index in $KO^0(\B_P)$, which has an {\it orientation bundle\/} $O_P^{E_\bu}\ra\B_P$, a principal $\Z_2$-bundle parametrizing orientations on $\Ker(\slashed{D}_+^{\nabla_P})\op \Coker(\slashed{D}_+^{\nabla_P})$ at $[\nabla_P]$. An {\it orientation\/} on $\B_P$ is a trivialization $O_P^{E_\bu}\cong\B_P\t\Z_2$. Similarly we define $\bar O_P^{E_\bu}\ra\ovB_P$. There is a natural isomorphism $O_P^{E_\bu}\cong\Pi_P^*(\bar O_P^{E_\bu})$, and orientations on $\B_P$ and $\ovB_P$ are equivalent. 

Our first main result Theorem \ref{ss1thm1} is that $\B_P$ is orientable, for any such $(X,g)$ and $P\ra X$. This extends results by Cao and Leung \cite[Th.~2.1]{CaLe2} and Mu\~noz and Shahbazi \cite{MuSh}. As in \cite[\S 2.5]{JTU} and Remark \ref{ss1rem4}(b) below, once we know the $\B_P$ are orientable, there is a method to choose particular orientations on all $\B_P$, depending only on a finite arbitrary choice. We then apply this to show that two interesting classes of moduli spaces in gauge theory and (derived) algebraic geometry are orientable, and we can choose particular orientations for them in a systematic way.

Suppose $X$ is a compact 8-manifold with a $\Spin(7)$-structure $(\Om,g)$, and $G=\U(m)$ or $\SU(m)$, and $P\ra X$ is a principal $G$-bundle. As in Donaldson and Thomas \cite{DoTh}, a $\Spin(7)$-{\it instanton\/} on $P$ is a connection $\nabla_P$ on $P$ with $\pi^2_7(F^{\nabla_P})=0$, where $\pi^2_7(F^{\nabla_P})$ is a certain component of the curvature $F^{\nabla_P}$ of $\nabla_P$. The deformation theory of $\Spin(7)$-instantons is elliptic, and therefore moduli spaces $\M_P^{\Spin(7)}$ of irreducible $\Spin(7)$-instantons on $P$ modulo gauge are derived manifolds \cite{Joyc2,Joyc4,Joyc5,Joyc6}, and smooth manifolds if $\Om$ is generic. 

We have an inclusion $\M_P^{\Spin(7)}\hookra\ovB^\irr_P\subset\ovB_P$, and orientations on $\ovB_P$ (equivalently, on $\B_P$) restrict to orientations on $\M_P^{\Spin(7)}$ in the usual sense. Thus our theorem implies that all such moduli spaces $\M_P^{\Spin(7)}$ are orientable. This will be important in any future programme to define enumerative invariants of $\Spin(7)$-manifolds by `counting' moduli spaces of $\Spin(7)$-instantons,~\cite{DoTh}.

The analogous problem of orienting anti-self-dual instanton moduli spaces $\M_P^{\rm asd}$ on 4-manifolds $X$ was solved by Donaldson \cite{Dona1,Dona2,DoKr}, and our proof is based on his techniques. However, the 8-dimensional case is considerably more difficult. This is because orientability for $\B_P$ depends on phenomena happening on submanifolds $Z\subset X$ of codimension 3 in $X$. When $X$ is a 4-manifold, such $Z$ are just circles, which are simple. But when $X$ is an 8-manifold, $Z$ is a 5-manifold, and so is much more complicated. Our proof uses the classification of compact, simply-connected 5-manifolds in Crowley~\cite{Crow}.

Next suppose $X$ is a Calabi--Yau $4m$-fold, a projective complex $4m$-manifold with trivial canonical bundle $K_X\cong\O_X$. Write $\bs\M$ for the derived moduli stack of coherent sheaves (or complexes of coherent sheaves) on $X$, as a derived stack in the sense of To\"en and Vezzosi \cite{Toen1,Toen2,ToVe1,ToVe2}, and $\M=t_0(\bs\M)$ for its classical truncation. By Pantev--To\"en--Vaqui\'e--Vezzosi \cite{PTVV}, $\bs\M$ has a $(2-4m)$-shifted symplectic structure $\om$. Borisov and Joyce \cite[\S 2.4]{BoJo} define a notion of orientation on $k$-shifted symplectic derived stacks $(\bs\M,\om)$ for even $k$, which form a principal $\Z_2$-bundle~$O^\om\ra\M$.

Write $\cC=\Map_{C^0}(X,B\U\t\Z)$, where $B\U\t\Z$ is the classifying space for complex K-theory with $B\U=\varinjlim_{n\ra\iy}B\U(n)$, and take $E_\bu$ to be the positive Dirac operator $\slashed{D}_+:\Ga^\iy(X_+)\ra\Ga^\iy(S_-)$ on $X$. Then as in \cite[\S 2.4]{JTU} we can use the principal $\Z_2$-bundles $O^{E_\bu}_P\ra\B_P$ for all principal $\U(n)$-bundles $P\ra X$ to build a natural principal $\Z_2$-bundle $O^{E_\bu}\ra\cC$. If $X$ is a Calabi--Yau 4-fold then the $O^{E_\bu}_P\ra\B_P$ are trivial by Theorem \ref{ss1thm1}, so $O^{E_\bu}\ra\cC$ is trivial.

Our second main result, Theorem \ref{ss1thm2}, says (in part) that for $X$ a Calabi--Yau $4m$-fold we can build a continuous map $\Ga:\M^\top\ra\cC$, where $\M^\top$ is the `topological realization' of $\M$ as in \cite{Blan,Simp1}, and an isomorphism of principal $\Z_2$-bundles $\ga:(O^\om)^\top\ra\Ga^*(O^{E_\bu})$ on $\M^\top$. If $X$ is a Calabi--Yau 4-fold then $O^{E_\bu}$ is trivial from above, so $(O^\om)^\top\ra\M^\top$ is a trivial topological principal $\Z_2$-bundle, and therefore $O^\om\ra\M$ is a trivial algebraic principal $\Z_2$-bundle.

This has applications to the programme of defining Donaldson--Thomas type `DT4 invariants' counting semistable coherent sheaves on Calabi--Yau 4-folds proposed by Donaldson and Thomas \cite{DoTh}, Cao and Leung \cite{CaLe1}, and Borisov and Joyce \cite{BoJo}. The main result of \cite{BoJo} is that given a proper $-2$-shifted symplectic derived scheme $(\bS,\om)$ with an orientation, one can define a virtual cycle $[\bS]_\virt$ in the homology $H_*(\bS,\Z)$. Applying this to semistable derived moduli schemes $(\bs\M_\al^{\rm ss},\om)$ (at least in the case when semistable=stable), using the orientations this paper provides, and pairing the virtual cycle $[\bs\M_\al^{\rm ss}]_\virt$ with certain cohomology classes on $\bs\M_\al^{\rm ss}$, should yield the proposed DT4 invariants.

Sections \ref{ss11}--\ref{ss14} summarize background material on the general theory of orientations in gauge theory from Joyce, Tanaka and Upmeier \cite{JTU}, and on $\Spin(7)$-manifolds and $\Spin(7)$-instantons, and on Calabi--Yau $m$-folds and moduli spaces of (complexes of) coherent sheaves. The main results are stated in \S\ref{ss15}, and the proofs of Theorems \ref{ss1thm1} and \ref{ss1thm2} are given in \S\ref{ss2} and~\S\ref{ss3}.
\medskip

\noindent{\it Acknowledgements.} The first author was supported by a Royal Society Newton International Fellowship. This research was partly funded by a Simons Collaboration Grant on `Special Holonomy in Geometry, Analysis and Physics'. The authors would like to thank Aleksander Doan, Simon Donaldson, Andriy Haydys, Yuuji Tanaka, Markus Upmeier, and Thomas Walpuski for helpful conversations. 

\subsection{\texorpdfstring{Connection moduli spaces $\B_P$ and orientations}{Connection moduli spaces ℬᵨ and orientations}}
\label{ss11}

The following definitions are taken from Joyce, Tanaka and Upmeier~\cite[\S 1--\S 2]{JTU}.

\begin{dfn} Suppose we are given the following data:
\begin{itemize}
\setlength{\itemsep}{0pt}
\setlength{\parsep}{0pt}
\item[(a)] A compact, connected manifold $X$, of dimension $n>0$.
\item[(b)] A Lie group $G$, with $\dim G>0$, and centre $Z(G)\subseteq G$, and Lie algebra $\g$.
\item[(c)] A principal $G$-bundle $\pi:P\ra X$. We write $\Ad(P)\ra X$ for the vector bundle with fibre $\g$ defined by $\Ad(P)=(P\t\g)/G$, where $G$ acts on $P$ by the principal bundle action, and on $\g$ by the adjoint action.
\end{itemize}

Write $\A_P$ for the set of connections $\nabla_P$ on the principal bundle $P\ra X$. This is a real affine space modelled on the infinite-dimensional vector space $\Ga^\iy(\Ad(P)\ot T^*X)$, and we make $\A_P$ into a topological space using the $C^\iy$ topology on $\Ga^\iy(\Ad(P)\ot T^*X)$. Here if $E\ra X$ is a vector bundle then $\Ga^\iy(E)$ denotes the vector space of smooth sections of $E$. Note that $\A_P$ is contractible.
 
Write $\G_P=\Aut(P)$ for the infinite-dimensional Lie group of $G$-equivariant diffeomorphisms $\ga:P\ra P$ with $\pi\ci\ga=\pi$. Then $\G_P$ acts on $\A_P$ by gauge transformations, and the action is continuous for the topology on~$\A_P$. 

There is an inclusion $Z(G)\hookra\G_P$ mapping $z\in Z(G)$ to the principal bundle action of $z$ on $P$. This maps $Z(G)$ into the centre $Z(\G_P)$ of $\G_P$, so we may take the quotient group $\G_P/Z(G)$. The action of $Z(G)\subset\G_P$ on $\A_P$ is trivial, so the $\G_P$-action on $\A_P$ descends to a $\G_P/Z(G)$-action. 

Each $\nabla_P\in\A_P$ has a (finite-dimensional) {\it stabilizer group\/} $\Stab_{\G_P}(\nabla_P)\subset\G_P$ under the $\G_P$-action on $\A_P$, with $Z(G)\subseteq\Stab_{\G_P}(\nabla_P)$. As $X$ is connected, $\Stab_{\G_P}(\nabla_P)$ is isomorphic to a closed Lie subgroup $H$ of $G$ with $Z(G)\subseteq H$. As in \cite[p.~133]{DoKr} we call $\nabla_P$ {\it irreducible\/} if $\Stab_{\G_P}(\nabla_P)=Z(G)$, and {\it reducible\/} otherwise. Write $\A_P^\irr,\A_P^\red$ for the subsets of irreducible and reducible connections in $\A_P$. Then $\A_P^\irr$ is open and dense in $\A_P$, and $\A_P^\red$ is closed and of infinite codimension in the infinite-dimensional affine space $\A_P$.

We write $\B_P=[\A_P/\G_P]$ for the moduli space of gauge equivalence classes of connections on $P$, considered as a {\it topological stack\/} in the sense of Metzler \cite{Metz} and Noohi \cite{Nooh1,Nooh2}. Write $\B_P^\irr=[\A_P^\irr/\G_P]$ for the substack $\B_P^\irr\subseteq\B_P$ of irreducible connections. 

Define variations $\ovB_P=[\A_P/(\G_P/Z(G))]$, $\ovB_P^\irr=[\A_P^\irr/(\G_P/Z(G))]$ of $\B_P,\ab\B_P^\irr$. Then $\ovB_P$ is a topological stack, but as $\G_P/Z(G)$ acts freely on $\A_P^\irr$, we may consider $\ovB_P^\irr$ as a topological space (which is an example of a topological stack). There are natural morphisms $\Pi_P:\B_P\ra\ovB_P$, $\Pi_P^\irr:\B^\irr_P\ra\ovB^\irr_P$.

The inclusions $\B^\irr_P\hookra\B_P$, $\ovB^\irr_P\hookra\ovB_P$ are weak homotopy equivalences of topological stacks in the sense of \cite{Nooh2}. Also $\Pi_P:\B_P\ra\ovB_P$ is a fibration with fibre $[*/Z(G)]$, which is connected and simply-connected, so $\B_P,\ovB_P$ are interchangeable for questions about orientations. Therefore, for the algebraic topological questions that concern us, working on $\ovB^\irr_P$ and on $\B_P$ are essentially equivalent, so we could just consider the topological space $\ovB^\irr_P$, and not worry about topological stacks at all, following most other authors in the area.

The main reason we do not do this in \cite{JTU} is that to relate orientations on different moduli spaces we consider direct sums of connections, which give a morphism $\Phi:\B_P\t\B_Q\ra\B_{P\op Q}$, but this and similar morphisms do not make sense for the spaces $\B_P^\irr,\ovB_P,\ovB_P^\irr$, so we prefer to work with the~$\B_P$.
\label{ss1def1}	
\end{dfn}

We define orientation bundles $O^{E_\bu}_P,\bar O^{E_\bu}_P$ on the moduli spaces~$\B_P,\ovB_P$:

\begin{dfn} 
 Work in the situation of Definition \ref{ss1def1}, with the same notation. Suppose we are given real vector bundles $E_0,E_1\ra X$, of the same rank $r$, and a linear elliptic partial differential operator $D:\Ga^\iy(E_0)\ra\Ga^\iy(E_1)$, of degree $d$. As a shorthand we write $E_\bu=(E_0,E_1,D)$. With respect to connections $\nabla_{E_0}$ on $E_0\ot\bigot^iT^*X$ for $0\le i<d$, when $e\in\Ga^\iy(E_0)$ we may write
\e
D(e)=\ts\sum_{i=0}^d a_i\cdot \nabla_{E_0}^ie,
\label{ss1eq1}
\e
where $a_i\in \Ga^\iy(E_0^*\ot E_1\ot S^iTX)$ for $i=0,\ldots,d$. The condition that $D$ is {\it elliptic\/} is that $a_d\vert_x\cdot\ot^d\xi:E_0\vert_x\ra E_1\vert_x$ is an isomorphism for all $x\in X$ and $0\ne\xi\in T_x^*X$, and the {\it symbol\/} $\si(D)$ of $D$ is defined using~$a_d$.

Let $\nabla_P\in\A_P$. Then $\nabla_P$ induces a connection $\nabla_{\Ad(P)}$ on the vector bundle $\Ad(P)\ra X$. Thus we may form the twisted elliptic operator
\e
\begin{split}
D^{\nabla_{\Ad(P)}}&:\Ga^\iy(\Ad(P)\ot E_0)\longra\Ga^\iy(\Ad(P)\ot E_1),\\
D^{\nabla_{\Ad(P)}}&:e\longmapsto \ts\sum_{i=0}^d (\id_{\Ad(P)}\ot a_i)\cdot \nabla_{\Ad(P)\ot E_0}^ie,
\end{split}
\label{ss1eq2}
\e
where $\nabla_{\Ad(P)\ot E_0}$ are the connections on $\Ad(P)\ot E_0\ot\bigot^iT^*X$ for $0\le i<d$ induced by $\nabla_{\Ad(P)}$ and~$\nabla_{E_0}$.

Since $D^{\nabla_{\Ad(P)}}$ is a linear elliptic operator on a compact manifold $X$, it has finite-dimensional kernel $\Ker(D^{\nabla_{\Ad(P)}})$ and cokernel $\Coker(D^{\nabla_{\Ad(P)}})$. The {\it determinant\/} $\det(D^{\nabla_{\Ad(P)}})$ is the 1-dimensional real vector space
\begin{equation*}
\det(D^{\nabla_{\Ad(P)}})=\det\Ker(D^{\nabla_{\Ad(P)}})\ot\bigl(\det\Coker(D^{\nabla_{\Ad(P)}})\bigr)^*,
\end{equation*}
where if $V$ is a finite-dimensional real vector space then $\det V=\La^{\dim V}V$.

These operators $D^{\nabla_{\Ad(P)}}$ vary continuously with $\nabla_P\in\A_P$, so they form a family of elliptic operators over the base topological space $\A_P$. Thus as in Atiyah and Singer \cite{AtSi}, there is a natural real line bundle $\hat L{}^{E_\bu}_P\ra\A_P$ with fibre $\hat L{}^{E_\bu}_P\vert_{\nabla_P}=\det(D^{\nabla_{\Ad(P)}})$ at each $\nabla_P\in\A_P$. It is equivariant under the actions of $\G_P$ and $\G_P/Z(G)$ on $\A_P$, and so pushes down to real line bundles $L^{E_\bu}_P\ra\B_P$, $\bar L^{E_\bu}_P\ra\ovB_P$ on the topological stacks $\B_P,\ovB_P$, with $L^{E_\bu}_P\cong\Pi_P^*(\bar L_P^{E_\bu})$. We call $L^{E_\bu}_P,\bar L^{E_\bu}_P$ the {\it determinant line bundles\/} of $\B_P,\ovB_P$. The restriction $\bar L^{E_\bu}_P\vert_{\ovB_P^\irr}$ is a topological real line bundle in the usual sense on the topological space~$\ovB_P^\irr$.

Define the {\it orientation bundle\/} $O^{E_\bu}_P$ of $\B_P$ by $O^{E_\bu}_P=(L^{E_\bu}_P\sm 0(\B_P))/(0,\iy)$. That is, we take the complement $L^{E_\bu}_P\sm 0(\B_P)$ of the zero section $0(\B_P)$ in $L^{E_\bu}_P$, and quotient by $(0,\iy)$ acting on the fibres of $L^{E_\bu}_P\sm 0(\B_P)\ra\B_P$ by multiplication. Then $L^{E_\bu}_P\ra\B_P$ descends to $\pi:O^{E_\bu}_P\ra\B_P$, which is a bundle with fibre $(\R\sm\{0\})/(0,\iy)\cong\{1,-1\}=\Z_2$, since $L^{E_\bu}_P\ra\B_P$ is a fibration with fibre $\R$. That is, $\pi:O^{E_\bu}_P\ra\B_P$ is a {\it principal\/ $\Z_2$-bundle}, in the sense of topological stacks.

Similarly we define a principal $\Z_2$-bundle $\bar\pi:\bar O^{E_\bu}_P\ra\ovB_P$ from $\bar L^{E_\bu}_P$, and as $L^{E_\bu}_P\cong\Pi_P^*(\bar L_P^{E_\bu})$ we have a canonical isomorphism $O^{E_\bu}_P\cong\Pi_P^*(\bar O_P^{E_\bu})$. The fibres of $O^{E_\bu}_P\ra\B_P$, $\bar O^{E_\bu}_P\ra\ovB_P$ are orientations on the real line fibres of $L^{E_\bu}_P\ra\B_P$, $\bar L^{E_\bu}_P\ra\ovB_P$. The restriction $\bar O^{E_\bu}_P\vert_{\ovB^\irr_P}$ is a principal $\Z_2$-bundle on the topological space $\ovB^\irr_P$, in the usual sense.

We say that $\B_P$ is {\it orientable\/} if $O^{E_\bu}_P$ is isomorphic to the trivial principal $\Z_2$-bundle $\B_P\t\Z_2\ra\B_P$. An {\it orientation\/} $\om$ on $\B_P$ is an isomorphism $\om:O^{E_\bu}_P\,{\buildrel\cong\over\longra}\,\B_P\t\Z_2$ of principal $\Z_2$-bundles. We make the same definitions for $\ovB_P$ and $\bar O^{E_\bu}_P$. Since $\Pi_P:\B_P\ra\ovB_P$ is a fibration with fibre $[*/Z(G)]$, which is connected and simply-connected, and $O^{E_\bu}_P\cong\Pi_P^*(\bar O_P^{E_\bu})$, we see that $\B_P$ is orientable if and only if $\ovB_P$ is, and orientations of $\B_P$ and $\ovB_P$ correspond. As $\B_P$ is connected, if $\B_P$ is orientable it has exactly two orientations.

We also define the {\it normalized orientation bundle\/} $\check O_P^{E_\bu}\ra\B_P$ by
\begin{equation*}
\check O_P^{E_\bu}=O_P^{E_\bu}\ot_{\Z_2}O_{X\t G}^{E_\bu}\vert_{[\nabla^0]}.
\end{equation*}
That is, we tensor the orientation bundle with the orientation torsor $O_{X\t G}^{E_\bu}\vert_{[\nabla^0]}$ of the trivial principal $G$-bundle $X\t G\ra X$ at the trivial connection $\nabla^0$. Then $\check O_{X\t G}^{E_\bu}\vert_{[\nabla^0]}=\Z_2$ is canonically trivial. Since we have natural isomorphisms
\begin{equation*}
\Ker(D^{\nabla^0_{\Ad(P)}})\cong\g\ot\Ker D,\qquad 
\Coker(D^{\nabla^0_{\Ad(P)}})\cong\g\ot\Coker D,
\end{equation*}
we see that (using an orientation convention) there is a natural isomorphism
\begin{equation*}
O_{X\t G}^{E_\bu}\vert_{[\nabla^0]}\cong \Or(\det D)^{\ot^{\dim\g}}\ot_{\Z_2}\Or(\g)^{\ot^{\ind D}},
\end{equation*}
where $\Or(\det D),\Or(\g)$ are the $\Z_2$-torsors of orientations on $\det D$ and $\g$. Thus, choosing orientations for $\det D$ and $\g$ gives an isomorphism $\check O_P^{E_\bu}\cong O_P^{E_\bu}$.

Normalized orientation bundles are convenient because they behave nicely under the Excision Theorem, Theorem \ref{ss2thm1} below. Note that $O^{E_\bu}_P$ is trivializable if and only if $\check O_P^{E_\bu}$ is, so for questions of orientability there is no difference.
\label{ss1def2}	
\end{dfn}

\begin{rem}{\bf(i)} Up to continuous isotopy, and hence up to isomorphism, $L^{E_\bu}_P,O^{E_\bu}_P$ in Definition \ref{ss1def2} depend on the elliptic operator $D:\Ga^\iy(E_0)\ra\Ga^\iy(E_1)$ up to continuous deformation amongst elliptic operators, and thus only on the {\it symbol\/} $\si(D)$ of $D$ (essentially, the highest order coefficients $a_d$ in \eq{ss1eq1}), up to deformation.
\smallskip

\noindent{\bf(ii)} For orienting moduli spaces of `instantons' in gauge theory, as in \S\ref{ss12}, we usually start not with an elliptic operator on $X$, but with an {\it elliptic complex\/}
\e
\smash{\xymatrix@C=28pt{ 0 \ar[r] & \Ga^\iy(E_0) \ar[r]^{D_0} & \Ga^\iy(E_1) \ar[r]^(0.55){D_1} & \cdots \ar[r]^(0.4){D_{k-1}} & \Ga^\iy(E_k) \ar[r] & 0. }}
\label{ss1eq3}
\e
If $k>1$ and $\nabla_P$ is an arbitrary connection on a principal $G$-bundle $P\ra X$ then twisting \eq{ss1eq3} by $(\Ad(P),\nabla_{\Ad(P)})$ as in \eq{ss1eq2} may not yield a complex (that is, we may have $D^{\nabla_{\Ad(P)}}_{i+1}\ci D^{\nabla_{\Ad(P)}}_i\ne 0$), so the definition of $\det(D_\bu^{\nabla_{\Ad(P)}})$ does not work, though it does work if $\nabla_P$ satisfies the appropriate instanton-type curvature condition. To get round this, we choose metrics on $X$ and the $E_i$, so that we can take adjoints $D_i^*$, and replace \eq{ss1eq3} by the elliptic operator
\e
\smash{\xymatrix@C=90pt{ \Ga^\iy\bigl(\bigop_{0\le i\le k/2}E_{2i}\bigr) \ar[r]^(0.48){\sum_i(D_{2i}+D_{2i-1}^*)} & \Ga^\iy\bigl(\bigop_{0\le i< k/2}E_{2i+1}\bigr), }}
\label{ss1eq4}
\e
and then Definition \ref{ss1def2} works with \eq{ss1eq4} in place of~$E_\bu$.
\label{ss1rem1}	
\end{rem}

\subsection{Orienting moduli spaces in gauge theory}
\label{ss12}

In gauge theory one studies moduli spaces $\M_P^{\rm ga}$ of (irreducible) connections $\nabla_P$ on a principal bundle $P\ra X$ (perhaps plus some extra data, such as a Higgs field) satisfying a curvature condition. Under suitable genericity conditions, these moduli spaces $\M_P^{\rm ga}$ will be smooth manifolds, and the ideas of \cite{JTU} can often be used to prove $\M_P^{\rm ga}$ is orientable, and construct a canonical orientation on $\M_P^{\rm ga}$. These orientations are important in defining enumerative invariants such as Casson invariants, Donaldson invariants, and Seiberg--Witten invariants. 
We illustrate this with the example of instantons on 4-manifolds,~\cite{DoKr}:

\begin{ex}
\label{ss1ex1}	
Let $(X,g)$ be a compact, oriented Riemannian 4-manifold, and $G$ a Lie group (e.g.\ $G=\SU(2)$), and $P\ra X$ a principal $G$-bundle. For each connection $\nabla_P$ on $P$, the curvature $F^{\nabla_P}$ is a section of $\Ad(P)\ot\La^2T^*X$. We have $\La^2T^*X=\La^2_+T^*X\op\La^2_-T^*X$, where $\La^2_\pm T^*X$ are the subbundles of 2-forms $\al$ on $X$ with $*\al=\pm\al$. Thus $F^{\nabla_P}=F^{\nabla_P}_+\op F^{\nabla_P}_-$, with $F^{\nabla_P}_\pm$ the component in $\Ad(P)\ot\La^2_\pm T^*X$. We call $(P,\nabla_P)$ an ({\it anti-self-dual\/}) {\it instanton\/} if $F^{\nabla_P}_+=0$.

Write $\M_P^{\rm asd}$ for the moduli space of gauge isomorphism classes $[\nabla_P]$ of irreducible instanton connections $\nabla_P$ on $P$, modulo $\G_P/Z(G)$. The deformation theory of $[\nabla_P]$ in $\M_P^{\rm asd}$ is governed by the Atiyah--Hitchin--Singer complex \cite{AHS}: 
\e
\begin{gathered} 
\xymatrix@C=8pt@R=4pt{ 0 \ar[rr] && 
\Ga^{\iy} ( \Ad(P) \ot \La^0T^*X ) \ar[rrr]^{\d^{\nabla_P}} &&& 
\Ga^{\iy} ( \Ad(P) \ot \La^1T^*X  ) \\
&& {\qquad\qquad\qquad} \ar[rrr]^{\d^{\nabla_P}_+} &&&
\Ga^{\iy} ( \Ad(P) \ot \La^2_+T^*X  ) \ar[rr] && 0, } 
\end{gathered}
\label{ss1eq5}
\e
where $\d^{\nabla_P}_+\ci\d^{\nabla_P}=0$ as $F^{\nabla_P}_+=0$. Write $\cH^0,\cH^1,\cH^2_+$ for the cohomology groups of \eq{ss1eq5}. Then $\cH^0$ is the Lie algebra of $\Aut(\nabla_P)$, so $\cH^0=Z(\g)$, the Lie algebra of the centre $Z(G)$ of $G$, as $\nabla_P$ is irreducible. Also $\cH^1$ is the Zariski tangent space of $\M_P^{\rm asd}$ at $[\nabla_P]$, and $\cH^2_+$ is the obstruction space. If $g$ is generic then as in \cite[\S 4.3]{DoKr}, for non-flat connections $\cH^2_+=0$ for all $[\nabla_P]\in\M_P^{\rm asd}$, and $\M_P^{\rm asd}$ is a smooth manifold, with tangent space $T_{[\nabla_P]}\M_P^{\rm asd}=\cH^1$. Note that $\M_P^{\rm asd}\subset\ovB_P$ is a subspace of the topological stack $\ovB_P$ from Definition~\ref{ss1def1}.

Take $E_\bu$ to be the elliptic operator on $X$
\begin{equation*}
D=\d+\d_+^*:\Ga^\iy(\La^0T^*X\op\La^2_+T^*X)\longra\Ga^\iy(\La^1T^*X).	
\end{equation*}
Turning the complex \eq{ss1eq5} into a single elliptic operator as in Remark \ref{ss1rem1}(ii) yields the twisted operator $D^{\nabla_{\Ad(P)}}$ from \eq{ss1eq2}. Hence the line bundle $\bar L^{E_\bu}_P\ra\ovB_P$ in Definition \ref{ss1def2} has fibre at $[\nabla_P]$ the determinant line of \eq{ss1eq5}, which (after choosing an isomorphism $\det Z(\g)\cong\R$) is $\det(\cH^1)^*=\det T^*_{[\nabla_P]}\M_P^{\rm asd}$. It follows that $\bar O_P^{E_\bu}\vert_{\M_P^{\rm asd}}$ is the orientation bundle of the manifold $\M_P^{\rm asd}$, and an orientation on $\ovB_P$ in Definition \ref{ss1def2} restricts to an orientation on the manifold $\M_P^{\rm asd}$ in the usual sense of differential geometry. This is a very useful way of defining orientations on $\M_P^{\rm asd}$, first used by Donaldson~\cite{Dona1,Dona2,DoKr}.
\end{ex}

There are several other important classes of gauge-theoretic moduli spaces $\M_P^{\rm ga}$ which have elliptic deformation theory, and so are generically smooth manifolds, for which orientations can be defined by pullback from $\ovB_P$. These include $\Spin(7)$-instantons, as in~\S\ref{ss13}.

\begin{rem} If we omit the genericness/transversality conditions, gauge theory moduli spaces $\M_P^{\rm ga}$ are generally not smooth manifolds. However, as long as their deformation theory is given by an elliptic complex similar to \eq{ss1eq5} whose cohomology is constant except at the second and third terms, $\M_P^{\rm ga}$ will still be a {\it derived smooth manifold\/} ({\it d-manifold}, or {\it m-Kuranishi space\/}) in the sense of Joyce \cite{Joyc2,Joyc4,Joyc5,Joyc6}. Orientations for derived manifolds are defined and well behaved, and we can define orientations on $\M_P^{\rm ga}$ by pullback of orientations on $\ovB_P$ exactly as in the case when $\M_P^{\rm ga}$ is a manifold.
\label{ss1rem2}
\end{rem}

\subsection{\texorpdfstring{$\Spin(7)$-manifolds and $\Spin(7)$-instantons}{Spin(7)-manifolds and Spin(7)-instantons}}
\label{ss13}

Next we discuss the exceptional holonomy group $\Spin(7)$ in 8 dimensions, and $\Spin(7)$-instantons on compact 8-manifolds with holonomy in $\Spin(7)$. See Joyce \cite[\S 10]{Joyc1} for background on the exceptional holonomy group~$\Spin(7)$.

\begin{dfn} Let $\R^8$ have coordinates $(x_1,\dots,x_8)$. Write $\d{\bf x}_{ijkl}$ for the 4-form $\d x_i\w\d x_j\w\d x_k\w\d x_l$ on $\R^8$. Define a 4-form $\Om_0$ on $\R^8$ by
\begin{align*}
\Om_0=\phantom{-}&\d{\bf x}_{1234}+\d{\bf x}_{1256} +\d{\bf
x}_{1278}+\d{\bf
x}_{1357}-\d{\bf x}_{1368}-\d{\bf x}_{1458}-\d{\bf x}_{1467}\\
-\,&\d{\bf x}_{2358}-\d{\bf x}_{2367}-\d{\bf x}_{2457}+\d{\bf
x}_{2468}+\d{\bf x}_{3456}+\d{\bf x}_{3478}+\d{\bf x}_{5678}.
\end{align*}
The subgroup of $\GL(8,\R)$ preserving $\Om_0$ is the holonomy
group $\Spin(7)$. It is a compact, connected, simply-connected,
semisimple, 21-dimensional Lie group, which is isomorphic to the double cover of $\SO(7)$. This group also preserves the orientation on $\R^8$ and the Euclidean metric $g_0=\d x_1^2+\cdots+\d x_8^2$ on~$\R^8$. 

Let $X$ be an 8-manifold. A $\Spin(7)$-{\it structure\/} $(\Om,g)$ on $X$ is a 4-form $\Om$ and Riemannian metric $g$ on $X$, such that for all $x\in X$ there exist isomorphisms $T_xX\cong\R^8$ identifying $\Om\vert_x\cong\Om_0$ and $g\vert_x\cong g_0$. We call $(\Om,g)$ {\it torsion-free\/} if $\d\Om=0$. This implies that $\Hol(g)\subseteq\Spin(7)$. A $\Spin(7)$-structure $(\Om,g)$ induces a splitting $\La^2T^*X=\La^2_7T^*X \op\La^2_{21}T^*X$ into vector subbundles of ranks $7,21$, the eigenspaces of~$\al\mapsto *(\al\w\Om)$.

A $\Spin(7)$-{\it manifold\/} $(X,\Om,g)$ is an 8-manifold $X$ with a torsion-free $\Spin(7)$-structure $(\Om,g)$. Examples of compact $\Spin(7)$-manifolds with holonomy $\Spin(7)$ were constructed by Joyce~\cite[\S 13--\S 15]{Joyc1}. Calabi--Yau 4-folds, and hyperk\"ahler 8-manifolds, are also $\Spin(7)$-manifolds.
\label{ss1def3}	
\end{dfn}

\begin{dfn} Let $(X,\Om,g)$ be a compact $\Spin(7)$-manifold, $G$ a Lie group, and $P\ra X$ a principal $G$-bundle. A $\Spin(7)$-{\it instanton\/} on $P$ is a connection $\nabla_P$ on $P$, whose curvature satisfies $\pi^2_7(F^{\nabla_P})=0$ in $\Ga^\iy(\Ad(P)\ot\La^2_7T^*X)$. Write $\M_P^{\Spin(7)}$ for the moduli space of irreducible $\Spin(7)$-instantons on $P$, modulo gauge transformations of $P$. Then $\M_P^{\Spin(7)}$ is a derived manifold, which is a manifold for non-flat connections if $\Om$ is generic. (We do not need $\d\Om=0$ here.)

Donaldson and Thomas \cite{DoTh} discussed $\Spin(7)$-instantons, proposing research directions, and examples of $\Spin(7)$-instantons on compact $\Spin(7)$-manifolds with holonomy $\Spin(7)$ were given by Lewis \cite{Lewi}, Tanaka \cite{Tana}, and Walpuski~\cite{Walp}. 

\label{ss1def4}	
\end{dfn}

To apply \S\ref{ss12} to $\Spin(7)$-instantons, we replace \eq{ss1eq5} by the complex:
\begin{equation*} 
\xymatrix@C=8pt@R=3pt{ 0 \ar[rr] && 
\Ga^{\iy} ( \Ad(P) \ot \La^0T^*X ) \ar[rrr]^{\d^{\nabla_P}} &&& 
\Ga^{\iy} ( \Ad(P) \ot \La^1T^*X  ) \\
&& {\qquad\qquad\qquad} \ar[rrr]^{\d^{\nabla_P}_7} &&&
\Ga^{\iy} ( \Ad(P) \ot \La^2_7T^*X  ) \ar[rr] && 0. } 
\end{equation*}
The orientation bundle of $\M_P^{\Spin(7)}$ is the pullback of $\bar O_P^{E_\bu}\ra\ovB_P$ in Definition \ref{ss1def2} under the inclusion $\M_P^{\Spin(7)}\hookra\ovB_P$, where $E_\bu$ is the elliptic operator
\begin{equation*}
D=\d+\d^*_7:\Ga^\iy(\La^0T^*X\op\La^2_7T^*X)\longra\Ga^\iy(\La^1T^*X).	
\end{equation*}
The symbol of $E_\bu$ is that of the positive Dirac operator $\slashed{D}_+:\Ga^\iy(S_+)\ra\Ga^\iy(S_-)$ on $X$, which makes sense on general oriented, spin Riemannian 8-manifolds.

\subsection{\texorpdfstring{Calabi--Yau $m$-folds and coherent sheaves}{Calabi--Yau m-folds and coherent sheaves}}
\label{ss14}

\begin{dfn} An {\it algebraic Calabi--Yau\/ $m$-fold\/} $(X,\th)$ is a connected smooth projective $\C$-scheme of complex dimension $m$ with a section $\th\in H^0(K_X)$ of the canonical bundle $K_X\ra X$ inducing an isomorphism~$\th:\O_X\ra K_X$.

A {\it differential-geometric Calabi--Yau\/ $m$-fold\/} $(X^\ran,J,g,\th)$ is a connected projective complex manifold $(X^\ran,J)$ of complex dimension $m$, equipped with a K\"ahler metric $g$ and a holomorphic $(m,0)$-form $\th\in\Ga(K_{X^\ran})$ satisfying
\e
\om^m=(-1)^{m(m-1)/2}i^m2^{-m}m!\cdot\th\w\bar\th,
\label{ss1eq6}
\e
where $\om$ is the K\"ahler form of $g$. This implies that $g$ is Ricci-flat with holonomy $\Hol(g)\subseteq\SU(m)$, and~$K_{X^\ran}\cong\O_{X^\ran}$.
\label{ss1def5}	
\end{dfn}

\begin{rem}{\bf(a)} Algebraic and differential-geometric Calabi--Yau $m$-folds are more-or-less equivalent. If $(X,\th)$ is an algebraic Calabi--Yau $m$-fold then the underlying complex analytic topological space $X^\ran$ is a smooth manifold with complex structure $J$, so that $(X^\ran,J)$ is a projective complex manifold, and $\th$ is equivalent to a non-vanishing holomorphic $(m,0)$-form $\th\in\Ga(K_{X^\ran})$. As $(X^\ran,J)$ is projective it admits K\"ahler metrics $g$. Using the Calabi Conjecture \cite[\S 5]{Joyc1} we can choose $g$ to be Ricci-flat, and rescaling $g$ we can arrange that \eq{ss1eq6} holds, so $(X^\ran,J,g,\th)$ is a differential-geometric Calabi--Yau $m$-fold.

Conversely, if $(X^\ran,J,g,\th)$ is a differential-geometric Calabi--Yau $m$-fold then as $(X^\ran,J)$ is a projective complex manifold it comes from a smooth projective $\C$-scheme $X$, and $(X,\th)$ is an algebraic Calabi--Yau $m$-fold.
\smallskip

\noindent{\bf(b)} Our definitions of Calabi--Yau $m$-fold require only that $\Hol(g)\subseteq\SU(m)$, not that $\Hol(g)=\SU(m)$, so they include examples such as compact hyperk\"ahler manifolds with $\Hol(g)=\Sp(k)$, and flat tori $T^{2m}$ with $\Hol(g)=\{1\}$. We allow this as Theorem \ref{ss1thm2} and Corollary \ref{ss1cor2} below hold in this generality.
\label{ss1rem3}	
\end{rem}

One can study Calabi--Yau 4-folds using Differential Geometry, or Algebraic Geometry (including Derived Algebraic Geometry), or a combination of both, as we do in this paper. The (Derived) Algebraic Geometry we will use involves a large amount of difficult background material, such as derived categories $D^b\coh(X)$, stacks, higher stacks, and derived stacks, which we do not have space to explain. So we give a list of references here, and later assume that those reading Theorem \ref{ss1thm2} and its proof have the necessary background.
\begin{itemize}
\setlength{\itemsep}{0pt}
\setlength{\parsep}{0pt}
\item[(i)] For Calabi--Yau manifolds from a mostly differential-geometric point of view, see Joyce~\cite[\S 6]{Joyc1}.
\item[(ii)] For foundations of Algebraic Geometry, and schemes, see Hartshorne~\cite{Hart}.
\item[(iii)] Let $X$ be a smooth projective $\C$-scheme, for instance, an algebraic Calabi--Yau $m$-fold $(X,\th)$. Then one can consider coherent sheaves on $X$, including (algebraic) vector bundles (i.e.\ locally free coherent sheaves). Write $\coh(X)$ for the abelian category of coherent sheaves on $X$. See Hartshorne \cite[\S II.5]{Hart} and Huybrechts and Lehn~\cite{HuLe}.

We can also consider the bounded derived category $D^b\coh(X)$ of complexes of coherent sheaves, as a triangulated category. For triangulated categories and derived categories see Gelfand and Manin \cite{GeMa}, and for properties of $D^b\coh(X)$ see Huybrechts~\cite{Huyb}.
\item[(iv)] Suppose we wish to define a moduli space $\M$ of objects in $\coh(X)$ or $D^b\coh(X)$. There are four different classes of space we use could do this:
\begin{itemize}
\setlength{\itemsep}{0pt}
\setlength{\parsep}{0pt}
\item[(a)] We could take $\M$ to be a scheme. This usually requires restricting to moduli of stable or semistable coherent sheaves. See Huybrechts and Lehn~\cite{HuLe}.
\item[(b)] The moduli space $\M$ of all objects in $\coh(X)$ is an Artin stack. See G\'omez \cite{Gome}, Laumon and Moret-Bailly \cite{LaMo} and Olsson~\cite{Olss}.
\item[(c)] The moduli space $\M$ of all objects in $D^b\coh(X)$ is a higher stack.
\item[(d)] We can also form a moduli space $\bs\M$ of all objects in $\coh(X)$ or $D^b\coh(X)$ in the sense of Derived Algebraic Geometry, as a derived stack. It has a classical truncation $\M=t_0(\bs\M)$ which is an Artin stack or higher stack, as in (b),(c). For Derived Algebraic Geometry and derived and higher stacks, see To\"en and Vezzosi~\cite{Toen1,Toen2,ToVe1,ToVe2}.
\end{itemize}
\item[(v)] As in Simpson \cite{Simp1} and Blanc \cite[\S 3.1]{Blan}, any Artin $\C$-stack or higher $\C$-stack $\M$ has a {\it topological realization\/} $\M^\top$, which is a topological space (in fact, a CW-complex) natural up to homotopy equivalence. Topological realization gives a functor $(-)^\top:\Ho(\HSta_\C)\ra\Top^{\bf ho}$ from the homotopy category $\Ho(\HSta_\C)$ of the $\iy$-category $\HSta_\C$ of higher $\C$-stacks to the category $\Top^{\bf ho}$ of topological spaces with morphisms homotopy classes of continuous maps. Algebraic principal $\Z_2$-bundles $P\ra\M$ lift to topological principal $\Z_2$-bundles $P^\top\ra\M^\top$, in such a way that algebraic trivializations $P\cong\M\t\Z_2$ correspond naturally to topological trivializations~$P^\top\cong\M^\top\t\Z_2$.
\item[(vi)] Pantev, To\"en, Vaqui\'e and Vezzosi \cite{PTVV} introduced a theory of shifted symplectic Derived Algebraic Geometry, defining $k$-{\it shifted symplectic structures\/} $\om$ on a derived stack $\bs\cS$ for $k\in\Z$. If $(X,\th)$ is an algebraic Calabi--Yau $m$-fold and $\bs\M$ is a derived moduli stack of objects in $\coh(X)$ or $D^b\coh(X)$ then $\bs\M$ has a $(2-m)$-shifted symplectic structure,~\cite[Cor.~2.13]{PTVV}.
\end{itemize}

Borisov and Joyce \cite[\S 2.4]{BoJo} define orientations in shifted symplectic geometry.

\begin{dfn}
\label{ss1def6}
Let $(\bS,\om)$ be a $k$-shifted symplectic derived $\C$-stack for $k$ even, e.g. $k=-2$, and $S=t_0(\bS)$ the classical truncation. All derived stacks $\bS$ in this paper are assumed to be locally finitely presented, as in To\"en and Vezzosi \cite[Def.~1.3.6.4]{ToVe2}. This implies that $\bS$ has a cotangent complex $\bL_\bS$ which is perfect \cite[Prop.~2.2.2.4]{ToVe2}, and has a dual tangent complex $\bT_\bS=(\bL_\bS)^\vee$. As $\bL_\bS,\bT_\bS$ are perfect the restrictions $\bL_\bS\vert_S,\bT_\bS\vert_S$ have determinant line bundles $\det(\bL_\bS\vert_S),\det(\bT_\bS\vert_S)$ over $S$, with~$\det(\bT_\bS\vert_S)\cong\det(\bL_\bS\vert_S)^*$. 

The symplectic structure $\om$ induces an isomorphism $\om\cdot{}:\bT_\bS[-1]\ra\bL_\bS[k-1]$, which yields an isomorphism $\det(\bT_\bS\vert_S)^{-1}\ra(\det\bL_\bS\vert_S)^{(-1)^{k-1}}=\det\bL_\bS\vert_S^{-1}$ on determinant line bundles as $k$ is even. Combining this with $\det(\bT_\bS\vert_S)\cong\det(\bL_\bS\vert_S)^{-1}$ gives a natural isomorphism~$\io^\om:\det(\bL_\bS\vert_S)\ra \det(\bL_\bS\vert_S)^{-1}$. 

Following Borisov and Joyce \cite[Def.~2.12]{BoJo}, an {\it orientation\/} for $(\bS,\om)$ is a choice of isomorphism $o^\om:\det(\bL_\bS\vert_S)\ra\O_S$ such that $(o^\om)^*\ci o^\om=\io^\om$. There is a principal $\Z_2$-bundle $\pi:O^\om\ra S$ called the {\it orientation bundle\/} of $(\bS,\om)$, which parametrizes \'etale local choices of $o^\om$, such that orientations for $(\bS,\om)$ correspond to global trivializations~$O^\om\cong S\t\Z_2$.
\end{dfn}

\subsection{The main results}
\label{ss15}

Here is our first main result. It will be proved in \S\ref{ss2} using a wide range of ideas and techniques, including much of the general theory of orientations in \cite{JTU,JoUp,Upme}, some surgery theory, some special geometry of $\SU(4)$, and the classification of compact simply-connected 5-manifolds in Crowley~\cite{Crow}.

\color{red}
\begin{thm}\footnote{\color{red}Unfortunately, Theorem \ref{ss1thm1} is false without extra assumptions on $X$. There is a mistake in the proof, in \S\ref{ss24}, which is highlighted there. See the \hyperref[erratum]{Erratum} for a corrected version of Theorem \ref{ss1thm1}, and for more details.\color{black}} Let\/ $X$ be a compact, oriented, spin Riemannian $8$-manifold, and\/ $E_\bu$ be the positive Dirac operator $\slashed{D}_+:\Ga^\iy(S_+)\ra\Ga^\iy(S_-)$ on $X$ in Definition\/ {\rm\ref{ss1def2}}. Suppose\/ $P\ra X$ is a principal\/ $G$-bundle for $G=\U(m)$ or $\SU(m)$. Then $\B_P$ is orientable, that is, $O_P^{E_\bu}\ra\B_P$ is a trivializable principal\/ $\Z_2$-bundle.

The orientation bundle $O^{E_\bu}\ra\cC$ over the mapping space $\cC=\Map_{C^0}(X,\ab B\U\t\Z)$ defined in\/ {\rm\cite[\S 2.4.2]{JTU}} and\/ {\rm\S\ref{ss32}} below is also trivializable.
\label{ss1thm1}
\end{thm}
\color{black}

The first part was previously proved by Cao and Leung \cite[Th.~2.1]{CaLe2} in the special case that $G=\U(m)$ and $H_{\rm odd}(X,\Z)=0$, and by Mu\~noz and Shahbazi \cite{MuSh} in the special case that $G=\SU(m)$ and $\Hom(H^3(X,\Z),\Z_2)=0$.

As in \S\ref{ss11}, orientations on $\B_P$ are equivalent to orientations on $\ovB_P$, and as in \S\ref{ss13}, if $(X,\Om,g)$ is a $\Spin(7)$-manifold and $P\ra X$ a principal $G$-bundle then orientations on $\ovB_P$ restrict to orientations on $\M_P^{\Spin(7)}$, giving:

\color{red}\begin{cor}\footnote{\color{red} Corollary \ref{ss1cor1} depends on Theorem \ref{ss1thm1}, which is false. So Corollary \ref{ss1cor1} may also be false, although we do not have a counterexample. See the \hyperref[erratum]{Erratum} for a corrected version of Corollary \ref{ss1cor1}.\color{black}} Let\/ $(X,\Om,g)$ be a compact\/ $\Spin(7)$-manifold. Then for any principal\/ $G$-bundle\/ $P\ra X$ for\/ $G=\U(m)$ or\/ $\SU(m),$ the moduli space\/ $\M_P^{\Spin(7)}$ of\/ $\Spin(7)$-instantons on $P$ is orientable, as a manifold or derived manifold.
\label{ss1cor1}
\end{cor}
\color{black}

Corollary \ref{ss1cor1} will be an important ingredient in any future programme to define Donaldson-invariant style enumerative invariants of $\Spin(7)$-manifolds $(X,\Om,g)$ by `counting' suitably compactified moduli spaces $\M_P^{\Spin(7)}$, as in~\cite{DoTh}.

\begin{rem}{\bf(a)} In a companion paper, Joyce and Upmeier \cite{JoUp} prove that if $(X,g)$ is a compact, oriented, spin Riemannian 7-manifold, and $E_\bu$ is the Dirac operator on $X$, and we choose an orientation on $\det D$ and a `flag structure' on $X$ (an algebro-topological structure on odd-dimensional manifolds defined in Joyce \cite[\S 3.1]{Joyc3}), then we can construct canonical orientations on $\B_P$ for all principal $\U(m)$- or $\SU(m)$-bundles $P\ra X$. Thus if $(X,\vp,g)$ is a compact torsion-free $G_2$-manifold, we can construct canonical orientations on moduli spaces $\M_P^{G_2}$ of $G_2$-instantons on~$P$.

The authors know how to define an analogue of flag structures for compact, spin 8-manifolds $X$, such that if we choose one of these structures on $X$ and an orientation of $\det \slashed{D}_+$, then we can improve Theorem \ref{ss1thm1} and Corollary \ref{ss1cor1} to construct canonical orientations on $\B_P$ and $\M_P^{\Spin(7)}$. However, these analogues of flag structures are more complicated and less attractive than flag structures, and we have decided not to write them up for the present.
\smallskip

\noindent{\bf(b)} In Definition \ref{ss1def2}, if we assume that $\B_P$ is orientable for all principal $\U(m)$-bundles $P\ra X$ (as in Theorem \ref{ss1thm1} for $\slashed{D}_+$ on spin 8-manifolds), then Joyce, Tanaka and Upmeier \cite[\S 2.5]{JTU} give a method to define canonical orientations on $\B_P$ for all principal $\U(m)$- and $\SU(m)$-bundles $P\ra X$, depending on a finite arbitrary choice. Thus, even without the flag-type structures discussed in {\bf(a)}, we can easily upgrade the orientability in Theorem \ref{ss1thm1} and Corollary \ref{ss1cor1} to particular choices of orientation on all such moduli spaces~$\B_P,\M_P^{\Spin(7)}$.
\label{ss1rem4}	
\end{rem}

It is natural to want to extend Theorem \ref{ss1thm1} and Corollary \ref{ss1cor1} to moduli spaces of connections on principal $G$-bundles $P\ra X$ for Lie groups $G$ other than $\U(m)$ and $\SU(m)$, but this is not always possible.

\begin{ex} Joyce and Upmeier \cite[\S 2.4]{JoUp} give an example of a compact, oriented, spin Riemannian 7-manifold $(Y,h)$ with $Y=\Sp(2)\t_{\Sp(1)\t\Sp(1)}\Sp(1)$, such that taking $F_\bu=D:\Ga^\iy(S_Y)\ra\Ga^\iy(S_Y)$ to be the Dirac operator on $(Y,h)$ and $Q=Y\t\Sp(m)\ra Y$ the trivial $\Sp(m)$-bundle for $m\ge 2$, then $O_Q^{F_\bu}\ra\B_Q$ is not orientable.

Let $\pi:X\ra Y$ be any principal $\U(1)$-bundle and $g$ be a $\U(1)$-invariant Riemannian metric on $X$ with $\pi_*(g)=h$. For instance, we could take $X=Y\t\cS^1$ and $g=h+\d\th^2$. Then $(X,g)$ is a compact, oriented, spin Riemannian 8-manifold with a free $\U(1)$-action. Write $E_\bu=\slashed{D}_+:\Ga^\iy(S_+)\ra\Ga^\iy(S_-)$ for the positive Dirac operator on $(X,g)$. Then $E_\bu$ is $\U(1)$-equivariant, with $S_+\cong S_-\cong \pi^*(S_Y)$, and $F_\bu$ is the pushdown of $E_\bu$ to $Y\cong X/\U(1)$ by restricting $\slashed{D}_+$ to $\U(1)$-invariant sections, as in~\cite[\S 2.2.6]{JTU}.

Let $P=\pi^*(Q)$ be the trivial $\Sp(m)$-bundle $P=X\t\Sp(m)\ra X$. If $\nabla_Q$ is a connection on $Q$ then $\nabla_P=\pi^*(\nabla_Q)$ is a connection on $P$. This defines an injective map $\pi^*:\B_Q\ra\B_P$ of topological stacks. Joyce, Tanaka and Upmeier \cite[Prop.~2.8]{JTU} construct an isomorphism $O_Q^{F_\bu}\cong(\pi^*)^*(O_P^{E_\bu})$ of principal $\Z_2$-bundles on $\B_Q$. Since $O_Q^{F_\bu}\ra\B_Q$ is not orientable \cite[\S 2.4]{JoUp}, this implies that $O_P^{E_\bu}\ra\B_P$ is not orientable. Hence the analogue of Theorem \ref{ss1thm1} for the Lie groups $G=\Sp(m)$, $m\ge 2$ is false.
\label{ss1ex2}
\end{ex}

Here is our second, rather long, main result, which will be proved in \S\ref{ss3}. It provides a bridge between differential geometry of connections on manifolds, and (derived) algebraic geometry of coherent sheaves and complexes on projective $\C$-schemes. Part (a) is essentially immediate from the definitions and references, and is mainly there to establish notation. But we state it separately as it works for general projective $\C$-schemes $X$, not just Calabi--Yau $4m$-folds.

\begin{thm}{\bf(a)} Further details of the following will be given in {\rm\S\ref{ss31}--\S\ref{ss33}}. Let\/ $X$ be a smooth projective $\C$-scheme, and\/ $X^\ran$ be the complex analytic space of\/ $X,$ considered as a compact smooth manifold. Write $\M$ for the moduli stack of objects in $D^b\coh(X)$ as a higher\/ $\C$-stack, as in\/ {\rm\cite{Toen1,ToVa,ToVe1}}. Then $\C$-points of\/ $\M$ correspond to isomorphism classes $[F^\bu]$ of\/~$F^\bu\in D^b\coh(X)$. 

As any complex\/ $F^\bu\in D^b\coh(X)$ has a class\/ $\lb F^\bu\rb\in K^0(X^\ran)$ which is locally constant in $F^\bu,$ there is a decomposition $\M=\coprod_{\al\in K^0(X^\ran)}\M_\al$ with\/ $\M_\al\subset\M$ the open and closed substack of objects $F^\bu$ with\/~$\lb F^\bu\rb=\al$.

There is a universal complex\/ $\cU^\bu\ra X\t\M$ with\/ $\cU^\bu\vert_{X\t[F^\bu]}\cong F^\bu$ for any $\C$-point\/ $[F^\bu]$ in $\M$. This corresponds to a morphism $u:X\t\M\ra\Perf_\C,$
where $\Perf_\C$ is a higher stack which classifies perfect complexes, as in To\"en and Vezzosi\/ {\rm\cite[Def.~1.3.7.5]{ToVe2},} which is just $\M$ for $X=\Spec\C$ the point. In fact\/ $u$ realizes $\M$ as the mapping stack\/~$\M=\Map_{\HSta_\C}(X,\Perf_\C)$.

Apply the topological realization functor $(-)^\top:\Ho(\HSta_\C)\ra\Top^{\bf ho},$ as in Simpson {\rm\cite{Simp1},} Blanc\/ {\rm\cite[\S 3.1]{Blan},} and\/ {\rm\S\ref{ss14}(v)}. As topological realizations only matter up to homotopy equivalence, we may take $X^\top=X^\ran,$ and by\/ {\rm\cite[Th.s 4.5 \& 4.21]{Blan}} we may take\/ $\Perf_\C^\top=B\U\t\Z$ to be the classifying space for complex K-theory, where\/ $B\U=\varinjlim_{n\ra\iy}B\U(n)$. Thus $(-)^\top$ gives a continuous map
\begin{equation*}
u^\top:X^\ran\t\M^\top\longra B\U\t\Z,
\end{equation*}
natural up to homotopy. Write $\Ga:\M^\top\ra\cC:=\Map_{C^0}(X^\ran,B\U\t\Z)$ for the induced map, which is natural up to homotopy.

Since $B\U\t\Z$ is the classifying space for complex K-theory there is a natural isomorphism $\pi_0(\cC)\cong K^0(X^\ran)$. Write $\cC_\al$ for the connected component of\/ $\cC$ corresponding to $\al\in K^0(X^\ran),$ so that\/ $\cC=\coprod_{\al\in K^0(X)}\cC_\al$. Then $\Ga$ maps $\M_\al^\top\ra\cC_\al$ for all\/ $\al\in K^0(X^\ran)$.

Direct sum in $D^b\coh(X)$ induces a natural morphism $\Phi:\M\t\M\ra\M$ acting on $\C$-points by $\Phi:([F^\bu],[G^\bu])\mapsto[F^\bu\op G^\bu]$. Then $\Phi$ is commutative and associative in $\Ho(\HSta_\C),$ with identity $[0]\in\M$. Hence $\Phi^\top:\M^\top\t\M^\top\ra\M^\top$ is commutative and associative up to homotopy, with a homotopy identity. That is, $\Phi^\top$ makes $\M^\top$ into a \begin{bfseries}commutative, associative H-space\end{bfseries}.

There is also a morphism $\Psi:\cC\t\cC\ra\cC$ on $\cC=\Map_{C^0}(X^\ran,B\U\t\Z),$ natural up to homotopy, induced by the usual multiplication on $B\U\t\Z,$ which makes $\cC$ into a commutative, associative H-space. Then there is a homotopy $\Ga\ci\Phi^\top\simeq\Psi\ci(\Ga\t\Ga)$ of maps $\M^\top\t\M^\top\ra\cC,$ and\/ $\Ga$ preserves homotopy identities. Hence $\Ga:\M^\top\ra\cC$ is an H-space morphism.
\smallskip

\noindent{\bf(b)} Now let\/ $(X,\th)$ be an algebraic Calabi--Yau $4m$-fold, and\/ $(X^\ran,J,g,\th)$ a corresponding differential-geometric Calabi--Yau\/ $4m$-fold as in Remark\/ {\rm\ref{ss1rem3}(a),} and use the notation of\/ {\bf(a)} for $X$.

Write $\bs\M$ for the derived moduli stack of objects in $D^b\coh(X),$ as in\/ {\rm\cite{Toen1,Toen2,ToVa,ToVe1,ToVe2},} so that\/ $\M=t_0(\bs\M)$ is its classical truncation.

By Pantev--To\"en--Vaqui\'e--Vezzosi\/ {\rm\cite[Cor.~2.13]{PTVV},} $\bs\M$ has a $(2-4m)$-shifted symplectic structure $\om,$ so Definition\/ {\rm\ref{ss1def6}} defines a notion of orientation on $(\bs\M,\om)$, which form an algebraic principal\/ $\Z_2$-bundle $\pi:O^\om\ra\M$. Write\/ $O^\om_\al=O^\om\vert_{\M_\al}$ for each\/ $\al\in K^0(X^\ran)$. Then $\pi^\top:O^{\om,\top}\ra\M^\top$ is homotopy equivalent to a topological principal\/ $\Z_2$-bundle over $\M^\top,$ so as topological realizations only matter up to homotopy equivalence, we may choose $O^{\om,\top}$ so that\/ $\pi^\top:O^{\om,\top}\ra\M^\top$ is a topological principal\/ $\Z_2$-bundle.

The differential-geometric Calabi--Yau structure induces a spin structure on $(X^\ran,g)$. Write $E_\bu$ for the positive Dirac operator $\slashed{D}_+:\Ga^\iy(S_+)\ra\Ga^\iy(S_-),$ and\/ $O^{E_\bu}\ra\cC$ for the topological principal\/ $\Z_2$-bundle of orientations on\/ $\cC$ defined in\/ {\rm\cite[\S 2.4.2]{JTU}} and\/ {\rm\S\ref{ss32}} below. Then there exists an isomorphism
\e
\ga:O^{\om,\top}\longra\Ga^*(O^{E_\bu})
\label{ss1eq7}
\e
of topological principal\/ $\Z_2$-bundles on $\M^\top$. Thus if\/ $O^{E_\bu}\ra\cC$ is trivializable, then $O^{\om,\top}\ra\M^\top$ is trivializable, and so $O^\om\ra\M$ is trivializable as an algebraic principal\/ $\Z_2$-bundle. 
\smallskip

\noindent{\bf(c)} Continue in the situation of\/ {\bf(b)\rm,} but now suppose $O^{E_\bu}\ra\cC$ is trivializable. Then there is a canonical choice of isomorphism $\ga$ in \eq{ss1eq7}. Thus an orientation $o^{E_\bu}_\al$ for\/ $\cC_\al$ induces by $\ga$ an orientation $o^\om_\al$ for $\M_\al,$ for each\/~$\al\in K^0(X^\ran)$.

In\/ {\rm\S\ref{ss32}--\S\ref{ss33}} we define isomorphisms of principal\/ $\Z_2$-bundles on $\M\t\M$ and\/ $\cC\t\cC,$ which compare orientations under direct sums: 
\e
\phi:O^\om\bt_{\Z_2} O^\om\longra\Phi^*(O^\om),\qquad \psi:O^{E_\bu}\bt_{\Z_2} O^{E_\bu}\longra\Psi^*(O^{E_\bu}).
\label{ss1eq8}
\e
Then in principal\/ $\Z_2$-bundles on $\M^\top\t\M^\top$ we have
\e
\begin{split}
&(\Phi^\top)^*(\ga)\ci\phi^\top\cong (\Ga\t\Ga)^*(\psi)\ci(\ga\bt\ga):\\
&O^{\om,\top}\bt_{\Z_2} O^{\om,\top}\longra
(\Ga\ci\Phi^\top)^*(O^{E_\bu})\cong(\Psi\ci(\Ga\t\Ga))^*(O^{E_\bu}).
\end{split}
\label{ss1eq9}
\e
Here we mean that any homotopy $h:\Ga\ci\Phi^\top{\buildrel\simeq\over\Longra}\,\Psi\ci(\Ga\t\Ga),$ which exists by {\bf(a)\rm,} induces an isomorphism $\io_h:(\Ga\ci\Phi^\top)^*(O^{E_\bu})\ra(\Psi\ci(\Ga\t\Ga))^*(O^{E_\bu})$ of principal\/ $\Z_2$-bundles on $\M^\top\t\M^\top$ by parallel translation along $h,$ and composition with $\io_h$ identifies $(\Phi^\top)^*(\ga)\ci\phi^\top$ and\/ $(\Ga\t\Ga)^*(\psi)\ci(\ga\bt\ga)$. As $O^{E_\bu}$ is trivializable, $\io_h$ is independent of the choice of homotopy~$h$.

Equation \eq{ss1eq9} implies that if we pull back orientations\/ $o^{E_\bu}_\al$ on\/ $\cC_\al$ to orientations $o^\om_\al$ on $\M_\al$ for all\/ $\al\in K^0(X^\ran)$ using $\ga$ as above, then for $\al,\be\in K^0(X^\ran)$ and\/ $\ep_{\al,\be}\in\{\pm 1\}$ we have
\e
\psi(o^{E_\bu}_\al\bt o^{E_\bu}_\be)=\ep_{\al,\be}\cdot o^{E_\bu}_{\al+\be}\quad\Longra\quad
\phi(o^\om_\al\bt o^\om_\be)=\ep_{\al,\be}\cdot o^\om_{\al+\be}.
\label{ss1eq10}
\e
That is, relations between orientations on $\cC_\al,\cC_\be,\cC_{\al+\be}$ under direct sum imply the analogous relations between orientations on\/ $\M_\al,\M_\be,\M_{\al+\be}$.
\label{ss1thm2}
\end{thm}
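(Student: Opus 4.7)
My plan: part (a) is essentially formal, following from cited results of To\"en--Vaqui\'e \cite{ToVa}, Simpson \cite{Simp1} and Blanc \cite{Blan}: the description $\M\cong\Map_{\HSta_\C}(X,\Perf_\C)$, the topological realization functor, the equivalence $\Perf_\C^\top\simeq B\U\t\Z$, and compatibility of direct sums with all of these are already on record. So my real task is parts (b) and~(c).

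For (b), the strategy is to realize both $O^{\om,\top}$ and $\Ga^*(O^{E_\bu})$ as principal $\Z_2$-bundles of square roots of the \emph{same} trivialization-up-to-square of a determinant line built from the universal perfect complex $\cU^\bu\ra X\t\M$. Concretely, on the algebraic side one has $\bL_{\bs\M}|_\M\cong (\pi_{\M*}\cR\cH om(\cU^\bu,\cU^\bu))^\vee[-1]$, so $\det\bL_{\bs\M}|_\M$ is, up to shift, the determinant of $\bigop_i\Ext^i(F^\bu,F^\bu)$ at $[F^\bu]\in\M$, and $\io^\om$ arises from Serre duality $\Ext^i(F^\bu,F^\bu)\cong\Ext^{4m-i}(F^\bu,F^\bu)^*$ via the PTVV construction. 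On the differential-geometric side, the Calabi--Yau spin structure gives canonical isomorphisms $S_+\op S_-\cong\La^{0,{\rm ev/odd}}T^*X^\ran$ under which $\slashed{D}_++\slashed{D}_+^*\cong\sqrt{2}(\db+\db^*)$; therefore, for a $\U(m)$-bundle $P$ with Chern connection coming from a holomorphic bundle $E$, the twisted Dirac index computes the Dolbeault cohomology $\bigop_i\Ext^i(E,E)$, and its determinant coincides with $\det\bL_{\bs\M}|_{[E]}$. The universal map $u^\top:X^\ran\t\M^\top\ra B\U\t\Z$ and the classifying map $\Ga:\M^\top\ra\cC$ extend this identification across all of $\M^\top$, not just loci of honest holomorphic bundles. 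Matching the two self-dualities --- namely $\io^\om$ (Serre/PTVV) and the formal self-adjointness of $\slashed{D}_+^{\nabla_{\Ad(P)}}$ in dimension $8m$ --- and taking square roots produces the desired~$\ga$.

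For part (c), once $\ga$ is built universally from $\cU^\bu$, compatibility \eqref{ss1eq9} reduces to a tautology on decompositions of endomorphism complexes. Both $\phi$ and $\psi$ arise from the canonical splitting $\End(F^\bu\op G^\bu)=\End(F^\bu)\op\cH om(F^\bu,G^\bu)\op\cH om(G^\bu,F^\bu)\op\End(G^\bu)$, where the Serre-dual cross pair $\cH om(F^\bu,G^\bu)\op\cH om(G^\bu,F^\bu)$ is a vector space perfectly paired with its dual and thus contributes a canonically oriented determinant line; this cancels consistently between the two sides, and \eqref{ss1eq10} is then read off directly. The canonical choice of $\ga$ under the assumption that $O^{E_\bu}$ is trivializable uses that self-equivalences of a trivial $\Z_2$-bundle over a connected component of $\M^\top$ form $\Z_2$, so signs can be pinned down component-by-component once a global trivialization of both bundles exists.

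The main obstacle I anticipate is the sign-matching in the last step of (b): ensuring that the $(-1)$-shifted symmetric structure $\io^\om$ produced by PTVV agrees exactly with the real/self-adjoint structure on $\det\slashed{D}_+^{\nabla_{\Ad(P)}}$ arising from spin index theory on an $8m$-manifold. Rather than unpacking the PTVV form in local coordinates, I would trace both pairings back to their common source, Serre duality on $\Ext^*(E,E)$, via the Dolbeault/spin identification, and then invoke Blanc's equivalence $\Perf_\C^\top\simeq B\U\t\Z$ together with the compatibility of the family $KO$-index with complex conjugation to force sign consistency across the whole moduli stack.
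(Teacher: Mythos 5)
The gap is in part (b), at the sentence ``The universal map $u^\top\colon X^\ran\t\M^\top\ra B\U\t\Z$ and the classifying map $\Ga\colon\M^\top\ra\cC$ extend this identification across all of $\M^\top$, not just loci of honest holomorphic bundles.'' The pointwise comparison you describe --- matching $\det(\bL_{\bs\M}|_\M)$ at $[E]$ with $\det\slashed{D}_+^{\nabla_{\Ad(P)}}$ via the Dolbeault/spin identification, and matching Serre duality with the real structure on $\db+\db^*$ --- only makes sense at points of $\M$ represented by vector bundles equipped with a Hermitian metric and compatible connection, since only there does one have both an algebraic Ext-complex and a twisted Dirac operator. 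Neither $u^\top$ nor $\Ga$ transports this identification to general points of $\M^\top$: $\Ga$ only records a topological K-theory class, and $O^{E_\bu}\ra\cC$ is itself constructed by descent from the bundles $O^{E_\bu}_P\ra\B_P$, so comparing it with $O^{\om,\top}$ requires an auxiliary space mapping compatibly to both sides. The paper supplies this via the Friedlander--Walker ind-scheme $\T=\Map_{\IndSch_\C}(X,\Gr(\C^\iy))$ of globally generated bundles with generating sequences, which maps to $\M$ by $\De$ and to $\coprod_{[P]}\B_P$ by $\La$ (a generating sequence canonically induces the metric and connection), and carries out your pointwise comparison there (Proposition \ref{ss3prop4}).

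The second, decisive, missing ingredient is how to descend the comparison from $\T^\ran$ to $\M^\top$. The map $\De^\top\colon\T^\ran\ra\M^\top$ is not a homotopy equivalence but only a homotopy-theoretic group completion (Proposition \ref{ss3prop3}, resting on Friedlander--Walker semi-topological K-theory, Blanc, and Antieu--Heller), and isomorphisms of principal $\Z_2$-bundles do not automatically descend along group completions. The paper proves a bespoke descent result, Proposition \ref{ss3prop1}, for weak/strong H-principal $\Z_2$-bundles, i.e.\ bundles compatible with the H-space multiplication, and must therefore verify that the pointwise comparison respects direct sums, equation \eq{ss3eq53}, before it can be descended. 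Your component-by-component sign-fixing in (c) runs into the same issue: $\pi_0(\M^\top)\cong K^0(X^\ran)$ is the group completion of the monoid of components of the vector-bundle locus, and consistency of the sign choices across all of $K^0(X^\ran)$, canonically and compatibly with $\phi,\psi$, is exactly what the strong H-principal bundle formalism delivers. Without a substitute for these two steps --- the restriction to a locus where both operators exist, and the H-space/group-completion descent --- the argument does not close. Your part (a) and the Serre-duality-versus-real-structure matching, and the treatment of the cross terms $\cH om(F^\bu,G^\bu)\op\cH om(G^\bu,F^\bu)$ in (c), do agree with the paper's approach.
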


\begin{rem}
\label{ss1rem5}
The following issue, discussed in \cite[\S 2.3.5]{JTU}, is why we separate parts (b),(c) in the theorem above. 

Suppose $Y,Z$ are topological spaces, $f_0,f_1:Y\ra Z$ are homotopic maps linked by a homotopy $(f_t)_{t\in[0,1]}$, and $P\ra Z$ is a principal $\Z_2$-bundle. Then $f_0^*(P),f_1^*(P)$ are principal $\Z_2$-bundles on $Y$ linked by a 1-parameter family $f_t^*(P)_{t\in[0,1]}$. Parallel translation along this family induces an isomorphism $f_0^*(P)\cong f_1^*(P)$ of principal $\Z_2$-bundles on $Y$.

If $P$ is trivializable this isomorphism $f_0^*(P)\cong f_1^*(P)$ is independent of the choice of homotopy $(f_t)_{t\in[0,1]}$, but if $P$ is nontrivial it may depend on the homotopy \cite[Ex.~2.18]{JTU}. Thus, as $\Ga:\M^\top\ra\cC$ is only natural up to homotopy, it is only reasonable for $\ga$ in \eq{ss1eq7} to be canonical if $O^{E_\bu}$ is trivializable.  

In fact, as in \cite[Rem.~2.17(b)]{JTU}, we could get round this issue by including extra structure. Rather than regarding $\M^\top,\cC$ as {\it H-spaces}, as in \S\ref{ss31}, we could make them into $\Ga$-{\it spaces}, as in Segal \cite[\S 1]{Sega}, or $E_\iy$-{\it spaces}, as in May \cite{May2}, and $\Ga:\M^\top\ra\cC$ into a morphism of $\Ga$- or $E_\iy$-spaces. Then whenever two maps $f_0,f_1$ in our theory are homotopic, the $\Ga$- or $E_\iy$-space structure would provide a homotopy $(f_t)_{t\in[0,1]}$ natural up to homotopies of homotopies, so the corresponding isomorphism of principal $\Z_2$-bundles would be canonical.
\end{rem}

Combining Theorems \ref{ss1thm1} and \ref{ss1thm2} yields:

\color{red}\begin{cor}\footnote{\color{red} Corollary \ref{ss1cor2} depends on Theorem \ref{ss1thm1}, which is false. So Corollary \ref{ss1cor2} may also be false, although we do not have a counterexample. See the \hyperref[erratum]{Erratum} for a corrected version of Corollary \ref{ss1cor2}.\color{black}} Let\/ $(X,\th)$ be an algebraic Calabi--Yau\/ $4$-fold. Then the orientation bundle $O^\om\ra\M$ from Theorem\/ {\rm\ref{ss1thm2}(b)} and Borisov and Joyce {\rm\cite[\S 2.4]{BoJo}} is a trivializable algebraic principal\/ $\Z_2$-bundle, i.e.\ $\M$ is orientable.

If we choose orientations $o_\al^{E_\bu}$ on\/ $\cC_\al$ for all\/ $\al\in K^0(X^\ran),$ which may satisfy compatibility conditions under direct sums as in\/ {\rm\cite[\S 2.5]{JTU},} then we obtain corresponding orientations\/ $o_\al^\om$ on\/ $\M_\al$ for all\/ $\al,$ which satisfy the analogous compatibility conditions under direct sums by\/~\eq{ss1eq10}. 

A systematic way of choosing such orientations $o_\al^{E_\bu}$ for all\/ $\al\in K^0(X^\ran),$ depending only on a finite arbitrary choice, is explained in\/~{\rm\cite[Th.~2.27]{JTU}}.
\label{ss1cor2}
\end{cor}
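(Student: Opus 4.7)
The plan is that this corollary should follow essentially as a direct consequence of combining Theorems \ref{ss1thm1} and \ref{ss1thm2}, together with the topological-to-algebraic comparison for principal $\Z_2$-bundles on higher stacks recalled in \S\ref{ss14}(v). There is essentially no new geometric content: all of the heavy lifting (the differential-geometric orientability in Theorem \ref{ss1thm1}, and the identification of algebraic and gauge-theoretic orientations via $\ga$ in Theorem \ref{ss1thm2}) has already been done.

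First, since $(X,\th)$ is an algebraic Calabi--Yau $4$-fold, the associated differential-geometric Calabi--Yau structure $(X^\ran,J,g,\th)$ gives $X^\ran$ the structure of a compact, oriented, spin Riemannian $8$-manifold, so Theorem \ref{ss1thm1} applies: the orientation bundle $O^{E_\bu}\ra\cC$ associated to the positive Dirac operator $\slashed{D}_+$ is trivializable. Next, Theorem \ref{ss1thm2}(b) provides the isomorphism $\ga:O^{\om,\top}\ra\Ga^*(O^{E_\bu})$ of topological principal $\Z_2$-bundles on $\M^\top$; pulling back a trivialization of $O^{E_\bu}\ra\cC$ along $\Ga$ and composing with $\ga$ yields a trivialization of $O^{\om,\top}\ra\M^\top$. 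Finally, as explained in \S\ref{ss14}(v), topological trivializations of $O^{\om,\top}$ correspond naturally to algebraic trivializations of $O^\om\ra\M$, so $O^\om$ is trivializable as an algebraic principal $\Z_2$-bundle, i.e.\ $\M$ is orientable.

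For the second statement, given orientations $o^{E_\bu}_\al$ on $\cC_\al$ for all $\al\in K^0(X^\ran)$, one transports them to orientations $o^\om_\al$ on $\M_\al$ by pullback under $\Ga$ using the canonical isomorphism $\ga$ provided by Theorem \ref{ss1thm2}(c) (which is available precisely because $O^{E_\bu}$ is trivializable by the first step, removing any ambiguity from the choice of homotopy in Remark \ref{ss1rem5}). The compatibility of these orientations under direct sums is then immediate from the implication \eq{ss1eq10} of Theorem \ref{ss1thm2}(c), which transfers any sign relations $\psi(o^{E_\bu}_\al\bt o^{E_\bu}_\be)=\ep_{\al,\be}\cdot o^{E_\bu}_{\al+\be}$ from $\cC$ to the analogous relations $\phi(o^\om_\al\bt o^\om_\be)=\ep_{\al,\be}\cdot o^\om_{\al+\be}$ on $\M$. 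The final claim about a systematic finite choice is a direct appeal to \cite[Th.~2.27]{JTU}, applied to the trivializable orientation bundle $O^{E_\bu}\ra\cC$ produced here.

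There is no real obstacle: the conceptual work is already packaged in Theorems \ref{ss1thm1} and \ref{ss1thm2}, and what remains is only to string the implications together and to verify that the topological-versus-algebraic distinction for trivializations causes no trouble, which is handled by the general functoriality of topological realization in \S\ref{ss14}(v). The only subtlety worth flagging is the one addressed in Remark \ref{ss1rem5}: the canonicity of $\ga$ (and hence the canonicity of the induced correspondence $o^{E_\bu}_\al\leftrightarrow o^\om_\al$) genuinely uses that $O^{E_\bu}$ is trivializable, and so it is essential to invoke Theorem \ref{ss1thm1} before invoking Theorem \ref{ss1thm2}(c).
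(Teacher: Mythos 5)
Your proposal is correct and follows exactly the route the paper intends: the corollary is stated immediately after the sentence ``Combining Theorems \ref{ss1thm1} and \ref{ss1thm2} yields,'' and the chain you describe (triviality of $O^{E_\bu}\ra\cC$ from Theorem \ref{ss1thm1}, then $\ga$ from Theorem \ref{ss1thm2}(b) to trivialize $O^{\om,\top}$, then the topological-to-algebraic correspondence of \S\ref{ss14}(v), then Theorem \ref{ss1thm2}(c) and \eq{ss1eq10} for the direct-sum compatibilities) is precisely the paper's argument, including the observation that canonicity of $\ga$ depends on first establishing trivializability of $O^{E_\bu}$. Nothing is missing.
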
\color{black}

\begin{rem} The higher $\C$-stack $\M$ in Theorem \ref{ss1thm2} and Corollary \ref{ss1cor2} contains as open Artin $\C$-substacks the moduli stacks $\M^{\rm coh},\M^{\rm coh,ss},\M^{\rm vect}$ of coherent sheaves, and semistable coherent sheaves, and algebraic vector bundles on $X$, respectively. The principal $\Z_2$-bundle $O^\om\ra\M$, and orientations on $\M$, may be restricted to $\M^{\rm coh},\ldots,\M^{\rm vect}$. Thus, Theorem \ref{ss1thm2} and Corollary \ref{ss1cor2} are still interesting if we only care about $\M^{\rm coh},\ldots,\M^{\rm vect}$ rather than~$\M$.
\label{ss1rem6}
\end{rem}

Corollary \ref{ss1cor2} has important applications in the programme of defining and studying `DT4 invariants' of Calabi--Yau 4-folds proposed by Borisov and Joyce \cite{BoJo} and Cao and Leung \cite{CaLe1}, which we now discuss. We summarize the main results of Borisov and Joyce~\cite{BoJo}:

\begin{thm}[Borisov and Joyce {\cite{BoJo}}] Let\/ $(\bS,\om)$ be a $-2$-shifted symplectic derived\/ $\C$-scheme, with complex virtual dimension\/ $\vdim_\C\bS=n$ in $\Z,$ and write\/ $S_{\rm an}$ for the set of\/ $\C$-points of\/ $S=t_0(\bS),$ with the complex analytic topology. Suppose that\/ $S$ is separated, and\/ $S_{\rm an}$ is second countable. Then after making some arbitrary choices, we can make the topological space $S_{\rm an}$ into a \begin{bfseries}derived smooth manifold\end{bfseries}\/ $\bS_{\rm dm}$ (d-manifold, or m-Kuranishi space) in the sense of Joyce {\rm\cite{Joyc2,Joyc4,Joyc5,Joyc6},} of real virtual dimension\/~$\vdim_\R\bS_{\rm dm}=n=\ha\vdim_\R\bS$.

There is a natural\/ {\rm 1-1} correspondence between orientations on $(\bS,\om)$ in the sense of Remark\/ {\rm\ref{ss1def6},} and orientations on the derived manifold\/~$\bS_{\rm dm}$.

If\/ $\bS$ is proper and\/ $(\bS,\om)$ is oriented then $\bS_{\rm dm}$ is a compact, oriented derived manifold, and so has a \begin{bfseries}virtual class\end{bfseries} $[\bS_{\rm dm}]_\virt$ in $H_n(S_{\rm an},\Z)$. These virtual classes are deformation-invariant under variations of\/ $(\bS,\om)$ in families.
\label{ss1thm3}
\end{thm}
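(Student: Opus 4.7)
The plan has four steps: (i) apply a Darboux-type local structure theorem for $-2$-shifted symplectic derived schemes to reduce to a quadratic local model; (ii) use the orientation to select a positive real form of the self-dual complex obstruction, halving the real virtual dimension; (iii) glue the resulting local real Kuranishi charts into an m-Kuranishi space on $S_{\rm an};$ and (iv) extract a virtual class via Joyce's theory and verify deformation invariance by bordism.

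For (i), by the Darboux-type theorem for negatively shifted symplectic derived schemes, \'etale-locally near any $\C$-point of $S$ the pair $(\bS,\om)$ is equivalent to a standard model built from a smooth $\C$-scheme $U,$ an algebraic vector bundle $E\ra U$ with a non-degenerate symmetric quadratic form $Q,$ an isotropic section $s\in H^0(U,E)$ with $Q(s,s)=0,$ together with Maurer--Cartan-type data encoding the full $-2$-shifted derived structure, and $\om$ arising from $Q.$ This reduces the problem to building a real derived smooth chart from the algebraic data $(U,E,Q,s).$

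Step (ii) is the geometric heart. Working analytically, I would pick a smooth real sub-bundle $E_+\subset E^{\rm an}$ of real rank $\rank_\C E$ such that $E^{\rm an}=E_+\op i\cdot E_+$ and $Q\vert_{E_+}$ is real-valued and positive definite---a \emph{positive real form} of $(E,Q).$ Pointwise the space of such forms is the symmetric space ${\rm O}(n,\C)/{\rm O}(n,\R),$ whose set of connected components is $\Z_2$ and is detected precisely by an orientation of $\det E;$ globally, the existence of a positive real form, and its homotopy class, is equivalent to an orientation of $(\bS,\om)$ in the sense of Definition\/ {\rm\ref{ss1def6}}. Orthogonally projecting $s$ onto $E_+$ yields a smooth real section $s_+\in\Ga^\iy(E_+),$ and the triple $(U^{\rm an},E_+,s_+)$ is an m-Kuranishi chart on the real zero locus of $s;$ a rank count, in which the half-rank of $E_+$ together with the Maurer--Cartan cancellation in the derived structure forces the correct value, gives real virtual dimension $n=\ha\vdim_\R\bS=\vdim_\C\bS.$

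For step (iii), on overlaps two local Darboux models and two choices of positive real form are related by changes of coordinates that are well-defined up to the higher-categorical flexibility built into m-Kuranishi spaces, so the local real charts assemble into a global m-Kuranishi space $\bS_{\rm dm}$ on $S_{\rm an}$ up to equivalence, Hausdorff and paracompact under the stated hypotheses on $S.$ The orientation of the obstruction bundles $E_+$ records an orientation of $\bS_{\rm dm}$ in the sense of\/ \cite{Joyc5,Joyc6}, in bijection with orientations of $(\bS,\om).$ For step (iv), if $\bS$ is proper then $\bS_{\rm dm}$ is compact, so an oriented $\bS_{\rm dm}$ carries a virtual fundamental class $[\bS_{\rm dm}]_\virt\in H_n(S_{\rm an},\Z)$ by the general theory of\/ \cite{Joyc2,Joyc4,Joyc5,Joyc6}, and running the whole construction relatively over a smooth base $B$ exhibits a bordism between the fibres at different parameters, yielding deformation invariance. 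The main obstacle is step (iii): while the pointwise linear algebra of positive real forms is elementary, orchestrating these choices across overlaps so that the resulting coordinate changes are genuinely coherent, and depend only on the orientation class rather than the particular local presentations, requires the full $\iy$-categorical strength of the m-Kuranishi formalism and is where the bulk of the technical work of\/ \cite{BoJo} lies.
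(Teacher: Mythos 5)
This theorem is quoted from Borisov--Joyce \cite{BoJo}; the present paper contains no proof of it (only the heuristic about a real Lagrangian fibration), so your sketch must be judged against the argument of \cite{BoJo} itself. Your four-step architecture --- the $-2$-shifted Darboux local model $(U,E,Q,s)$ with $Q(s,s)=0$, passage to a positive real form $E_+$ of $(E,Q)$ in the analytic topology, projection of $s$ to $s_+\in\Ga^\iy(E_+)$, gluing into an m-Kuranishi space, and deformation invariance via the relative construction over a base --- is indeed the strategy of \cite{BoJo}. One omission worth recording: the reason the chart has the correct footprint is that $Q(s,s)=0$ together with positive-definiteness of $Q\vert_{E_+}$ forces $s_+^{-1}(0)=s^{-1}(0)$; this is the one place the isotropy of $s$ is actually used.

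However, step (ii) contains a genuine error about where the orientation enters. The space of positive real forms of a nondegenerate quadratic space $(\C^n,Q)$ is indeed ${\rm O}(n,\C)/{\rm O}(n,\R)$, but this space is connected and in fact contractible: ${\rm O}(n,\R)$ is the maximal compact subgroup of ${\rm O}(n,\C)$, the inclusion induces an isomorphism on $\pi_0$, and the Cartan decomposition identifies the quotient with a Euclidean space. Its set of components is a point, not $\Z_2$. Hence positive real forms exist globally and unconditionally and carry no orientation data; the contractibility of the fibres is precisely what \cite{BoJo} exploit to make the choices coherent on overlaps. Were your claim correct, a non-orientable $(\bS,\om)$ would not even admit a derived manifold structure, contradicting the first paragraph of the theorem, which constructs $\bS_{\rm dm}$ with no orientability hypothesis. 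The actual correspondence is different: $Q$ canonically trivializes $(\det E)^{\ot 2}\cong\det(\bL_{\bS}\vert_S)^{\ot 2}$, an orientation of $(\bS,\om)$ is a square root of this trivialization, and such a square root is the same thing as an orientation of the real line bundle $\det_{\R}E_+$; combined with the canonical complex orientation of $U^\ran$ this orients the chart $(U^\ran,E_+,s_+)$. With step (ii) corrected in this way, the rest of your outline matches \cite{BoJo}.
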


Theorem \ref{ss1thm3} is far from obvious. The two geometric structures are apparently unrelated, and the virtual dimension of $\bS_{\rm dm}$ is half that of $\bS$. A heuristic explanation is that there is a real `Lagrangian fibration' $\pi:\bS\ra\bS_{\rm dm}$, whose `Lagrangian' fibres are points with derived structure in degrees $-1,-2$, and $\vdim_\R\bS_{\rm dm}=\ha\vdim_\R\bS$ holds as the dimension of the base of a Lagrangian fibration is half the dimension of the symplectic manifold.

The usual notion of virtual class in algebraic geometry is defined by Behrend and Fantechi \cite{BeFa}. The virtual classes in Theorem \ref{ss1thm3} are new, and very different. The construction of \cite{BeFa} does not apply in this case, as $\bL_\bS\vert_S$ is perfect in $[-2,0]$ not $[-1,0]$. The virtual class $[\bS_{\rm dm}]_\virt$ has half the dimension one would expect from \cite{BeFa}, and may even have odd real dimension. 

Borisov and Joyce propose to use the virtual classes in Theorem \ref{ss1thm3} for derived moduli schemes of semistable coherent sheaves $(\bs\M_\al^{\rm ss},\om)$ on an algebraic Calabi--Yau 4-fold $(X,\th)$ to define Donaldson--Thomas \cite{DoTh} style `DT4 invariants' of Calabi--Yau 4-folds. Cao and Leung \cite{CaLe1} make a similar proposal using gauge theory. An essential ingredient in this programme is orientations on moduli spaces $(\bs\M_\al^{\rm ss},\om)$. Corollary \ref{ss1cor2} implies that such orientations always exist, and can be chosen in a systematic way.

\section{Proof of Theorem \ref{ss1thm1}}
\label{ss2}

Let $X$ be a compact, oriented, spin Riemannian 8-manifold, $E_\bu$ be the positive Dirac operator on $X$, and $P\ra X$ be a principal $G$-bundle for $G=\U(m)$ or $\SU(m)$. We must prove the orientation bundle $O_P^{E_\bu}\ra\B_P$ in Definition \ref{ss1def2} is trivial. As in Definition \ref{ss1def2}, this is equivalent to the normalized orientation bundle $\check O_P^{E_\bu}\ra\B_P$ being trivial. We will do this in the following steps:
\smallskip

\noindent{\bf Step 1.} Use results of Joyce, Tanaka and Upmeier \cite{JTU} to show that $\B_Q$ is orientable for any principal $\U(m)$- or $\SU(m)$-bundle $Q\ra X$, and also $\cC_\al$ is orientable for all $\al\in K^0(X)$, if and only if $\B_P$ is orientable when $P=X\t\SU(4)$ is the trivial $\SU(4)$-bundle over $X$.
\smallskip

\noindent{\bf Step 2.} Let $P=X\t\SU(4)\ra X$ be the trivial $\SU(4)$-bundle and $\nabla^0$ the trivial connection on $P$, so that $[\nabla^0]$ is a base-point in $\B_P$. The fundamental group $\pi_1(\B_P)$ is the set of homotopy classes $[\ga]$ of loops $\ga:\cS^1\ra\B_P$ with $\ga(1)=[\nabla^0]$. As in \cite[\S 2]{JTU} there is a group morphism $\Th:\pi_1(\B_P)\ra\Z_2=\{\pm 1\}$ such that $\Th([\ga])$ is the monodromy of the principal $\Z_2$-bundle $\check O_P^{E_\bu}\ra\B_P$ around $\ga$, and $\B_P$ is orientable if and only if~$\Th\equiv 1$.

We establish (already known) natural 1-1 correspondences between:
\begin{itemize}
\setlength{\itemsep}{0pt}
\setlength{\parsep}{0pt}
\item[(a)] Elements $[\ga]\in \pi_1(\B_P)$.
\item[(b)] Isomorphism classes $[Q,q]$ of pairs $(Q,q)$, where $Q\ra X\t\cS^1$ is a principal $\SU(4)$-bundle and $q:Q\vert_{X\t\{1\}}\,{\buildrel\cong\over\longra}\,(X\t\{1\})\t\SU(4)=P$ is a trivialization of $Q$ over $X\t\{1\}$.
\item[(c)] Homotopy classes $[\Phi]$ of smooth maps $\Phi:X\ra\SU(4)$.
\end{itemize}

Write $[X,\SU(4)]$ for the set of homotopy classes $[\Phi]$ of smooth maps $\Phi:X\ra\SU(4)$. Then the 1-1 correspondence gives a bijection $\pi_1(\B_P)\cong [X,\SU(4)]$. This is an isomorphism of groups, where $[X,\SU(4)]$ has group operation $[\Phi]+[\Phi']=[\mu(\Phi,\Phi')]$, for $\mu:\SU(4)\t\SU(4)\ra\SU(4)$ the multiplication map. In fact $\pi_1(\B_P)$, $[X,\SU(4)]$ are abelian, as $\SU(4)$ is in the stable range for 8-manifolds. Let $\hat\Th:[X,\SU(4)]\ra \Z_2$ be identified with $\Th$ under $\pi_1(\B_P)\cong [X,\SU(4)]$. We must prove that~$\hat\Th\equiv 1$.
\smallskip

\noindent{\bf Step 3.} Define subsets $Y_k\subset\SU(4)$ for $k=0,\ldots,3$ by
\e
Y_k=\bigl\{A\in\SU(4):\dim_\C\{\bs x=(x_1,x_2,x_3,0)^T\in\C^4: A\bs x=-\bs x\}=k\bigr\},
\label{ss2eq1}
\e
so that $\SU(4)=Y_0\amalg\cdots\amalg Y_3$. We prove that:
\begin{itemize}
\setlength{\itemsep}{0pt}
\setlength{\parsep}{0pt}
\item[(i)] $Y_k$ is a connected, simply-connected, oriented, embedded submanifold of $\SU(4)$ (which is also oriented) of real codimension $k(k+2)$. Hence $Y_0$ is open in $\SU(4)$, and $Y_1,Y_2,Y_3$ have codimensions~$3,8,15$.
\item[(ii)] The closure of $Y_k$ in $\SU(4)$ is $\ovY_k=Y_k\amalg Y_{k+1}\amalg\cdots\amalg Y_3$.
\item[(iii)] There is a smooth family of smooth maps $\Psi_t:Y_0\ra\SU(4)$ for $t\in[0,1]$ with $\Psi_0$ the inclusion $Y_0\hookra \SU(4)$, and $\Psi_1\equiv\Id$ the constant map with value $\Id\in\SU(4)$. That is, $Y_0$ retracts to $\{\Id\}$ in~$\SU(4)$.
\item[(iv)] We may define a smooth map $\phi:Y_1\ra\CP^2$ by $\phi(A)=[x_1,x_2,x_3]$ if $A\bs x=-\bs x$ for $\bs x=(x_1,x_2,x_3,0)^T$. The normal bundle $\nu$ of $Y_1$ in $\SU(4)$ is isomorphic to $\R\op\phi^*(\O(1))$, for $\O(1)\ra\CP^2$ the standard line bundle.
\end{itemize}

It is known that the cohomology of $\SU(4)$ may be written as a graded ring
\e
H^*(\SU(4),\Z)\cong \La_\Z[p_3,p_5,p_7],
\label{ss2eq2}
\e
where $p_3,p_5,p_7$ are odd generators in degrees $3,5,7$, which satisfy
\e
\begin{split}
\mu^*(p_k)=p_k\bt 1+1\bt p_k \quad\text{in}\quad &H^*(\SU(4)\t\SU(4),\Z)\\
&\cong H^*(\SU(4),\Z)\ot H^*(\SU(4),\Z).
\end{split}
\label{ss2eq3}
\e

Under Poincar\'e duality $\Pd:H^k(\SU(4),\Z)\,{\buildrel\cong\over\longra}\, H_{15-k}(\SU(4),\Z)$ we have
\e
\Pd(p_3)=[\ovY_1], \qquad \Pd(p_3\cup p_5)=[\ovY_2].
\label{ss2eq4}
\e

\noindent{\bf Step 4.} Using the notation of Steps 2--3, define maps $\la_k:[X,\SU(4)]\ra H^k(X,\Z)$ for $k=3,5,7$ by $\la_k([\Phi])=\Phi^*(p_k)$. Equation \eq{ss2eq3} and $[\Phi]+[\Phi']=[\mu(\Phi,\Phi')]$ imply that $\la_3,\la_5,\la_7$ are group morphisms. We can also define a map $\ka:[X,\SU(4)]\ra\Z$ by
\e
\ka:[\Phi]\longmapsto \bigl(\la_3([\Phi])\cup\la_5([\Phi])\bigr)\cdot[X].
\label{ss2eq5}
\e
Note that this is {\it not\/} a group morphism, but is quadratic in $\Phi$.

We prove that for any $\al\in H^5(X,\Z)$ we can construct $[\Phi']\in [X,\SU(4)]$ with $\la_3([\Phi'])=0$ and $\la_5([\Phi'])=\al$.

Therefore any $[\Phi]\in [X,\SU(4)]$ may be written $[\Phi]=[\Phi']+[\Phi'']$ with $\la_3([\Phi'])=0$ and $\la_5([\Phi''])=0$, since we can take $[\Phi']$ as above with $\la_3([\Phi'])=0$ and $\la_5([\Phi'])=\al=\la_5([\Phi])$, and $[\Phi'']=[\Phi]-[\Phi']$, and use the fact that $\la_3,\la_5$ are group morphisms. Note that $\ka([\Phi'])=\ka([\Phi''])=0$. Since $\hat\Th([\Phi])=\hat\Th([\Phi'])\cdot\hat\Th([\Phi''])$, we see from Step 2 that it is sufficient to prove that $\hat\Th([\Phi])=1$ for all $[\Phi]\in[X,\SU(4)]$ with~$\ka([\Phi])=0$.
\smallskip

\noindent{\bf Step 5.} Suppose $X$ is connected, and $[\Phi]\in[X,\SU(4)]$ with $\ka([\Phi])=0$. Choose a generic representative $\Phi:X\ra\SU(4)$ for $[\Phi]$. As $\Phi$ is a generic smooth map from an 8-manifold to a 15-manifold, it need not be an embedding, but it fails to be an embedding only on a 1-dimensional subset of $X$, and we can assume by genericness that the image of this subset avoids the codimension 3 subset $Y_1\amalg Y_2\amalg Y_3$ of $\SU(4)$. Hence $\Phi$ is an embedding near $Y_1\amalg Y_2\amalg Y_3$, and $\Phi(X)$ intersects $Y_k$ transversely in $\SU(4)$ for $k=1,2,3$ by genericness. Thus $\Phi(X)\cap Y_1$ is an oriented embedded 5-manifold, and $\Phi(X)\cap Y_2$ an oriented embedded 0-manifold, and $\Phi(X)\cap Y_3=\es$, so $\Phi(X)\cap Y_2$ is compact as~$\ovY_2=Y_2\amalg Y_3$.

From \eq{ss2eq4}--\eq{ss2eq5} we see that the number of points in $\Phi(X)\cap Y_2$, counted with signs, is $\ka([\Phi])=0$. Using this we show that we can perturb $\Phi$ in its homotopy class to make $\Phi(X)\cap Y_2=\es$. Then $\Phi(X)\cap Y_1$ is compact, as~$\ovY_1=Y_1\amalg Y_2\amalg Y_3$.

Define $Z=\{x\in X:\Phi(x)\in Y_1\}$. Then $Z$ is a compact, oriented, embedded 5-submanifold in $X$ diffeomorphic to $\Phi(X)\cap Y_1$. Define $\psi:Z\ra\CP^2$ by $\psi=\phi\ci\Phi\vert_Z$, for $\phi$ as in Step 3(iv). The normal bundle $\nu_Z$ of $Z$ in $X$ satisfies
\e
\nu_Z\cong\Phi\vert_Z^*(\nu)\cong \R\op\psi^*(\O(1)).
\label{ss2eq6}
\e
As $TX\vert_Z=TZ\op\nu_Z$, and $X$ is spin so that $w_2(TX)=0$, we see that the second Stiefel--Whitney class $w_2(Z)\in H^2(Z,\Z_2)$ satisfies
\begin{equation*}
w_2(Z)=w_2(TZ)=w_2(\nu_Z)=w_2(\R\op\psi^*(\O(1)))=\psi^*(c_1(\O(1)))\mod 2.
\end{equation*}
That is, $w_2(Z)$ is the image in $H^2(Z,\Z_2)$ of the integral class $\psi^*(c_1(\O(1)))$ in $H^2(Z,\Z)$. This implies that $Z$ admits a $\Spinc$-structure, and simplifies the classification of possible 5-manifolds $Z$ up to diffeomorphism.

If $X$ is connected, or simply-connected, we show that we can perturb $\Phi$ in its homotopy class to make $Z$ connected, or simply-connected, respectively.

The importance of $Z$ is that $\Phi$ maps $X\sm Z\ra Y_0\subset\SU(4)$, where $Y_0$ retracts to $\{\Id\}\subset\SU(4)$ by Step 3(iii). Hence $\Phi\vert_{X\sm Z}$ is homotopic to the constant map $\Id$, and if $[Q,q]$ corresponds to $\Phi$ as in Step 2, then $(Q,q)$ is trivial over $(X\sm Z)\t\cS^1$. This allows us to use excision techniques in Steps 6 and~7.
\smallskip
 
\noindent{\bf Step 6.} Suppose $X$ is connected and simply-connected, and $[\ga]\in\pi_1(\B_P)$ corresponds to $[\Phi]\in [X,\SU(4)]$ as in Step 2 with $\ka([\Phi])=0$ as in Step 4, and define $\Phi,Z,\psi,\nu_Z$ with $Z$ connected and simply-connected as in Step~5.

Using the classification of compact, simply-connected 5-manifolds in Crowley \cite{Crow}, we show we can choose a tubular neighbourhood $U$ of $Z$ in $X$, an explicit compact, oriented, spin Riemannian 8-manifold $X'$ with $H^{\rm odd}(X',\Z)=0$, and an embedding $\io:U\hookra X'$ of $U$ as an open submanifold of $X'$, where $\io$ preserves orientations and spin structures.

Using the Excision Theorem, Theorem \ref{ss2thm1}, we show that the monodromy $\Th([\ga])$ of $\check O_P^{E_\bu}\ra\B_P$ around $\ga$ equals the monodromy of $\check O_{P'}^{E'_\bu}\ra\B_{P'}$ around some loop $\ga'$ in $\B_{P'}$, where $P'=X'\t\SU(4)$. Since $H^{\rm odd}(X',\Z)=0$, $\B_{P'}$ is orientable by \cite[Cor.~2.25]{JTU}, so $\Th([\ga])=\hat\Th([\Phi])=1$. As in Step 4 it is sufficient to prove this for $[\Phi]$ with $\ka([\Phi])=0$, so $\B_P$ is orientable. This proves Theorem \ref{ss1thm1} in the case $X$ is simply-connected.
\smallskip

\noindent{\bf Step 7.} For $X$ not simply-connected, by doing surgeries on finitely many disjoint embedded circles $L_1,\ldots,L_k$ in $X$ we can modify $X$ to a simply-connected, compact, oriented, spin Riemannian 8-manifold $X'$, with open covers $X=U\cup V$, $X'=U'\cup V'$ for $V$ a small tubular neighbourhood of $L_1\amalg\cdots\amalg L_k$ in $X$, and a diffeomorphism $\io:U\ra U'$ preserving orientations and spin structures.

Let $[\ga]\in \pi_1(\B_P)$ correspond to $(Q,q)$ as in Step 2. Then $Q\ra X\t\cS^1$ is trivial over $(L_1\amalg\cdots\amalg L_k)\t\cS^1$, as any $\SU(4)$-bundle over a 2-manifold is trivial, so $Q$ is trivial over $V\t\cS^1$ as $V$ retracts onto $L_1\amalg\cdots\amalg L_k$, and we can choose this trivialization compatible with $q$ on $V\t\{1\}$.

Using the Excision Theorem as in Step 6, we find that $\Th([\ga])=\Th'([\ga'])$ for some loop $\ga'$ in $\B_{P'}$, where $P'=X'\t\SU(4)$. But $\Th'([\ga'])=1$ by Step 6, as $X'$ is simply-connected, so $\Th([\ga])=1$. Thus $\B_P$ is orientable, completing the proof of Theorem~\ref{ss1thm1}.
\smallskip

We will give more details on Steps 1--6 in \S\ref{ss21}--\S\ref{ss26}. Step 7 is very similar to Step 6, and we leave it as an exercise for the reader.

\subsection{\texorpdfstring{Step 1: Reduction to the case $P=X\t\SU(4)$}{Step 1: Reduction to the case  P= X x SU(4)}}
\label{ss21}

We first recall the material in \cite{JTU,Upme} we need in the rest of the proof. Let $X$ be a compact, connected $n$-manifold and $E_\bu$ an elliptic complex on $X$, and use the notation of Definitions \ref{ss1def1}--\ref{ss1def2}. Joyce, Tanaka and Upmeier \cite[\S 2]{JTU} explain:
\begin{itemize}
\setlength{\itemsep}{0pt}
\setlength{\parsep}{0pt}
\item[(i)] If $P\ra X$ is a principal $\SU(m)$-bundle, define $Q=(P\t\U(m))/\SU(m)$, so that $Q\ra X$ is a principal $\U(m)$-bundle, and let $R=X\t\U(1)\ra X$ be the trivial $\U(1)$-bundle. Then as in \cite[Ex.~2.9]{JTU} there is a natural isomorphism $\B_Q\cong\B_P\t\B_R$ and a corresponding isomorphism $\check O_Q^{E_\bu}\cong\check O_P^{E_\bu}\bt_{\Z_2}\check O_R^{E_\bu}$. But $\check O_R^{E_\bu}$ is trivial by \cite[\S 2.2.3]{JTU} as $\U(1)$ is abelian. Hence $\check O_P^{E_\bu}$ is trivializable if and only if $\check O_Q^{E_\bu}$ is, so $\B_P$ is orientable if and only if $\B_Q$ is orientable. 
\item[(ii)] Write $\cC=\Map_{C^0}(X,B\U\t\Z)$, where $B\U\t\Z$ is the classifying space for complex K-theory with $B\U=\varinjlim_{n\ra\iy}B\U(n)$. There is a natural isomorphism $K^0(X)\cong \pi_0(\cC)$. Write $\cC_\al$ for the connected component of $\cC$ corresponding to $\al\in K^0(X)$. In \cite[\S 2.4.2]{JTU} and \S\ref{ss32} below we construct principal $\Z_2$-bundles $\check O^{E_\bu}\ra\cC$ and $\check O^{E_\bu}_\al\ra\cC_\al$ with $\check O^{E_\bu}_\al=\check O^{E_\bu}\vert_{\cC_\al}$. We say that $\cC_\al$ is {\it orientable\/} if $\check O^{E_\bu}_\al$ is trivializable. By \cite[Prop.~2.24(b)]{JTU}, $\cC_\al$ is orientable if and only if $\cC_0$ is orientable for any $\al$ in $K^0(X)$. 
\item[(iii)] Let $P\ra X$ be a principal $\U(m)$-bundle. It has a K-theory class $\lb P\rb$ in $K^0(X)$, the class of the complex vector bundle $(P\t\C^m)/\U(m)$. Set $\al=\lb P\rb$. In \cite[\S 2.4.2]{JTU} we relate the principal $\Z_2$-bundles $\check O_P^{E_\bu}\ra\B_P$ and $\check O_\al^{E_\bu}\ra\cC_\al$ as follows: let $\pi^\cla:\B_P^\cla\ra\B_P$ be a `classifying space' for $\B_P$ in the sense of Noohi \cite{Nooh2}, that is, $\B_P^\cla$ is a paracompact topological space and $\pi^\cla:\B_P^\cla\ra\B_P$ is a morphism in $\Ho(\TopSta)$ which in a weak sense is a fibration with contractible fibres. Then there is a continuous map $\Si_P^\cC:\B_P^\cla\ra\cC_\al$, natural up to homotopy, and a natural isomorphism of principal $\Z_2$-bundles $\si_P^\cC:(\pi^\cla)^*(\check O_P^{E_\bu})\ra(\Si_P^\cC)^*(\check O_\al^{E_\bu})$. Hence
\begin{align*}
&\text{$\cC_\al$ is orientable}\;\Longleftrightarrow\; \text{$\check O_\al^{E_\bu}$ is trivial} \; \Longra\; \text{$(\Si_P^\cC)^*(\check O_\al^{E_\bu})$ is trivial}\; \smash{\buildrel\raisebox{3pt}{$\st\si_P^\cC$}\over\Longleftrightarrow} \\
&\text{$(\pi^\cla)^*(\check O_P^{E_\bu})$ is trivial}\;{\buildrel\raisebox{3pt}{$\st\text{$\pi^\cla$ is $\simeq$}$}\over\Longleftrightarrow}\;\text{$\check O_P^{E_\bu}$ is trivial}\;\Longleftrightarrow\;\text{$\B_P$ is orientable,}
\end{align*}
so $\cC_\al$ orientable implies $\B_P$ is orientable. Also $\pi_1(\Si_P^\cC):\pi_1(\B_P^\cla)\ra\pi_1(\cC_\al)$ is surjective if $2m\ge n$, and an isomorphism if $2m>n$. Thus, if $2m\ge n$ then $\check O_\al^{E_\bu}$ is trivial if and only if $(\Si_P^\cC)^*(\check O_\al^{E_\bu})$ is trivial, so $\cC_\al$ is orientable if and only if $\B_P$ is orientable.
\item[(iv)] Let $P=X\t\SU(m)\ra X$ be the trivial $\SU(m)$-bundle, for any $m$ with $2m\ge n$, and suppose $\B_P$ is orientable. Then (i) implies that $\B_Q$ is orientable for $Q=X\t\U(m)\ra X$ the trivial $\U(m)$-bundle, so (iii) shows $\cC_\be$ is orientable for $\be=\lb Q\rb\in K^0(X)$, and (ii) proves $\cC_\al$ is orientable for any $\al\in K^0(X)$, and (iii),(i) imply that $\B_R$ is orientable for any principal $\U(m')$- or $\SU(m')$-bundle $R\ra X$ and any $m'\ge 0$.
\item[(v)] By \cite[Prop.~2.24(c)]{JTU} we have $\pi_1(\cC_0)\cong K^1(X)$, the odd complex K-theory group of $X$. Hence if $K^1(X)=0$ then any principal $\Z_2$-bundle over $\cC_0$ is trivial, and $\cC_0$ is orientable, so $\B_P$ is orientable for any principal $\U(m)$- or $\SU(m)$-bundle $P\ra X$ as in~(iv).

There is an Atiyah--Hirzebruch spectral sequence $H^{\rm odd}(X,\Z)\Ra K^1(X)$. Thus if $H^{\rm odd}(X,\Z)=0$ then~$K^1(X)=0$.
\end{itemize}
Step 1 now follows from (iv) with $n=8$ and $m=4$.

The next theorem is proved in Upmeier \cite[Th.~2.13]{Upme}, based on Donaldson \cite[\S II.4]{Dona1}, \cite[\S 3(b)]{Dona2}, and will be used in Steps 6 and~7. 

\begin{thm}[Excision Theorem]
\label{ss2thm1}
Suppose we are given the following data:
\begin{itemize}
\setlength{\itemsep}{0pt}
\setlength{\parsep}{0pt}
\item[{\bf(a)}] Compact\/ $n$-manifolds $X^+,X^-$.
\item[{\bf(b)}] Elliptic complexes $E_\bu^\pm$ on $X^\pm$.
\item[{\bf(c)}] A Lie group $G,$ and principal\/ $G$-bundles $P^\pm\ra X^\pm$ with connections\/~$\nabla_{P^\pm}$.
\item[{\bf(d)}] Open covers $X^+=U^+\cup V^+,$ $X^-=U^-\cup V^-$.
\item[{\bf(e)}] A diffeomorphism $\io:U^+\ra U^-,$ such that $E_\bu^+\vert_{U^+}$ and\/ $\io^*(E_\bu^-\vert_{U^-})$ are isomorphic elliptic complexes on $U^+$.
\item[{\bf(f)}] An isomorphism $\si:P^+\vert_{U^+}\ra \io^*(P^-\vert_{U^-})$ of principal\/ $G$-bundles over $U^+,$ which identifies $\nabla_{P^+}\vert_{U^+}$ with\/~$\io^*(\nabla_{P^-}\vert_{U^-})$.
\item[{\bf(g)}] Trivializations of principal\/ $G$-bundles $\tau^\pm:P^\pm\vert_{V^\pm}\ra V^\pm\t G$ over $V^\pm,$ which identify $\nabla_{P^\pm}\vert_{V^\pm}$ with the trivial connections, and satisfy 
\begin{equation*}
\smash{\io\vert_{U^+\cap V^+}^*(\tau^-)\ci\si\vert_{U^+\cap V^+}=\tau^+\vert_{U^+\cap V^+}}.
\end{equation*}
\end{itemize}
Then we have a canonical identification of\/ $\Z_2$-torsors
\e
\Om^{+-}:\check O_{P^+}^{E_\bu^+}\big\vert_{[\nabla_{P^+}]}\,{\buildrel\cong\over\longra}\,\check O_{P^-}^{E_\bu^-}\big\vert_{[\nabla_{P^-}]}.
\label{ss2eq7}
\e

The isomorphisms \eq{ss2eq7} are functorial in a very strong sense. For example:
\begin{itemize}
\setlength{\itemsep}{0pt}
\setlength{\parsep}{0pt}
\item[{\bf(i)}] If we vary any of the data in {\bf(a)}--{\bf(g)} continuously in a family over $t\in[0,1],$ then the isomorphisms $\Om^{+-}$ also vary continuously in $t\in[0,1]$. 
\item[{\bf(ii)}] The isomorphisms $\Om^{+-}$ are unchanged by shrinking the open sets $U^\pm,V^\pm$ such that\/ $X^\pm=U^\pm\cup V^\pm$ still hold, and restricting $\io,\si,\tau^\pm$.
\item[{\bf(iii)}] If we are also given a compact\/ $n$-manifold\/ $X^\t,$ elliptic complex $E_\bu^\t,$ bundle $P^\t\ra X^\t,$ connection $\nabla_{P^{\smash{\t}}},$ open cover $X^\t=U^\t\cup V^\t,$ diffeomorphism $\io':U^-\ra U^\t,$ and isomorphisms  $\si':P^-\vert_{U^-}\ra \io^{\prime*}(P^\t\vert_{U^\t}),$ $\tau^\t:P^\t\vert_{V^\t}\ra V^\t\t G$ satisfying the analogues of\/ {\bf(a)\rm--\bf(g)\rm,} then\/ $\Om^{+\t}=\Om^{-\t}\ci\Om^{+-},$ where $\Om^{+\t}$ is defined using $\io'\ci\io:U^+\ra U^\t$ and\/~$\io^*(\si')\ci\si:P^+\vert_{U^+}\ra (\io'\ci\io)^*(P^\t\vert_{U^\t})$.
\end{itemize}
\end{thm}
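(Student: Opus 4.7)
The plan is to follow Donaldson's cut-and-paste strategy \cite{Dona1,Dona2,DoKr} as axiomatized by Upmeier \cite{Upme}. The guiding principle is that the normalized orientation torsor $\check O_P^{E_\bu}$ is by construction insensitive to regions where the bundle is trivialized and the connection is trivial, so it is determined by the ``interesting'' region, which is identified across the two sides by $\io$.

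First I would set up the geometry by shrinking $U^\pm,V^\pm$ (property {\bf(ii)}, verified a posteriori) so that $U^+\cap V^+$ is a tubular neighbourhood of a smooth separating codimension-$1$ hypersurface $H^+\subset X^+$, yielding a decomposition $X^+=A^+\cup_{H^+}B^+$ with $A^+\subset U^+$ and $B^+\subset V^+$. Setting $H^-:=\io(H^+)$ gives the analogous decomposition $X^-=A^-\cup_{H^-}B^-$. Next, I would invoke a gluing/splitting formula for determinant lines: for Dirac-type elliptic operators on closed manifolds, Atiyah--Patodi--Singer-type theory produces a continuous canonical isomorphism of the global determinant with the tensor product of the two sides taken with APS boundary conditions. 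Applying this to the twisted operators of \eq{ss1eq2} and then quotienting by the analogous splitting for the trivial-bundle reference operator (which is what the normalization $\check O$ subtracts) yields
\[
\check O^{E_\bu^\pm}_{P^\pm}\big\vert_{[\nabla_{P^\pm}]}\cong \check O_{A^\pm}\otimes_{\Z_2}\check O_{B^\pm}.
\]
Now on $B^\pm$ the trivializations $\tau^\pm$ identify the twisted operator with $\dim G$ copies of the untwisted reference operator, with matching boundary data at $H^\pm$, so the normalization cancels and $\check O_{B^\pm}$ carries a canonical trivialization; the compatibility $\io^*(\tau^-)\ci\si=\tau^+$ on $U^+\cap V^+$ ensures these trivializations are further identified. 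On the $A^\pm$ side, the diffeomorphism $\io$ together with $\si$ and the elliptic-complex isomorphism from {\bf(e)} induces a canonical identification $\check O_{A^+}\cong \check O_{A^-}$. Tensoring produces the desired isomorphism $\Om^{+-}$.

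The functoriality claims follow from naturality of this construction: continuity in families {\bf(i)} comes from continuity of the gluing isomorphism in the Fredholm datum; invariance under shrinking {\bf(ii)} uses that the isomorphism depends only on data in a neighbourhood of $H^\pm$, and different choices of $H^\pm$ inside $U^+\cap V^+$ are connected by isotopy, inducing a homotopy of isomorphisms on the connected $\Z_2$-torsor which is therefore constant; composability {\bf(iii)} follows from associativity of the tensor-product decomposition once one chooses compatible hypersurfaces for the two gluing steps. The principal obstacle will be the analytical substance of the gluing formula together with careful sign bookkeeping: one must verify that the resulting $\Z_2$-isomorphism is genuinely canonical rather than defined only up to a global sign. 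Donaldson's original argument handles this via an explicit path of connections plus an index-bundle continuity argument, whereas Upmeier's reformulation is likely cleaner because it works directly with stabilized Fredholm determinants in the sense of Quillen, for which the gluing isomorphisms are functorial on the nose.
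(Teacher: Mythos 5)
First, a point about the ground truth: the paper does not prove Theorem \ref{ss2thm1} at all. It is quoted verbatim from Upmeier \cite[Th.~2.13]{Upme}, which in turn builds on Donaldson \cite{Dona1,Dona2}. So there is no in-paper proof to match your proposal against; what I can do is compare your strategy with the cited one. Your route --- choose a separating hypersurface $H^\pm$ in $U^\pm\cap V^\pm$, split the determinant line by an APS-type gluing formula, observe that the normalization makes the $B^\pm$-factor canonically trivial and that $\io,\si$ identify the $A^\pm$-factors --- is the classical Donaldson-style cut-and-paste argument, and the way you use the normalization $\check O$ to cancel the trivialized region is exactly the role it plays in the paper (cf.\ Definition \ref{ss1def2}). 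However, Upmeier's actual proof is genuinely different: it works at the level of elliptic \emph{symbols} and their classes in compactly supported K-theory of $T^*X^\pm$, categorifying the excision property of the index, rather than performing an analytic splitting with boundary conditions. That approach buys exactly the two things your sketch has to work hardest for: canonicity (not just up-to-sign) of $\Om^{+-}$, and applicability to arbitrary elliptic complexes.

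The genuine gap in your proposal is that the ``gluing/splitting formula for determinant lines'' is invoked as a black box, and it does not exist off the shelf in the generality you need. The theorem is stated for arbitrary elliptic complexes $E_\bu^\pm$ of any order; well-posed APS boundary value problems, and the resulting canonical splitting of determinant lines, are established essentially only for first-order operators of Dirac type in product form near the hypersurface. One can try to reduce to the symbol and deform (Remark \ref{ss1rem1}(i)), but then one must check that the deformation can be performed compatibly on $X^+$ and $X^-$ simultaneously, preserving the identifications from {\bf(e)}--{\bf(g)}, and that the resulting isomorphism is independent of the deformation --- at which point one is reproving the symbol-level excision statement that is the actual content of \cite{Upme}. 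A second, smaller issue: your verification of {\bf(ii)} via isotopy of hypersurfaces only shows independence of the choice of $H^\pm$ \emph{within a fixed isotopy class} in $U^+\cap V^+$; if $U^+\cap V^+$ is topologically complicated, different admissible hypersurfaces need not be isotopic, and independence then requires an additional argument (e.g.\ a cobordism/associativity argument for the gluing isomorphism, or again the symbol-level formulation). So the architecture of your argument is sound and faithful to the Donaldson tradition, but the analytic pillar it rests on is precisely what must be built, and for that the cited K-theoretic excision of \cite{Upme} is the cleaner and more general foundation.
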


\subsection{\texorpdfstring{Step 2: Alternative descriptions of $\pi_1(\B_P)$}{Step 2: Alternative descriptions of π₁(ℬᵨ)}}
\label{ss22}

We will justify the 1-1 correspondences between (a),(b),(c) in Step 2. Let $P=X\t\SU(4)\ra X$ be the trivial $\SU(4)$-bundle and $\nabla^0$ the trivial connection on $P$, so that $[\nabla^0]\in\B_P$. As in (a), let $\ga:\cS^1\ra\B_P$ be a smooth path with $\ga(1)=[\nabla^0]$. Then $\ga$ is a smooth path of connections on $P$ modulo gauge. We can think of $\ga$ as a smooth family of pairs $(P_z,\nabla_{P_z})_{z\in\cS^1}$, where $P_z\ra X$ is a principal $\SU(4)$-bundle which is isomorphic to $P$, but not canonically isomorphic to $P$ (since we quotient by the gauge group $\G_P=\Aut(P)$), and $\nabla_{P_z}$ is a connection on $P_z$, with $P_1=P$ and~$\nabla_{P_1}=\nabla^0$.

We can assemble the $(P_z)_{z\in\cS^1}$ into a principal $\SU(4)$-bundle $Q\ra X\t\cS^1$ with $Q\vert_{X\t\{z\}}=P_z$, and then $Q\vert_{X\t\{1\}}=P_1=P$ gives a trivialization $q:Q\vert_{X\t\{1\}}\,{\buildrel\cong\over\longra}\,(X\t\{1\})\t\SU(4)$ as required. The connections $(\nabla_{P_z})_{z\in\cS^1}$ assemble into a partial connection $\nabla_Q^X$ on $Q$ in the $X$ directions in $X\t\cS^1$. Note that although each $P_z$ is (noncanonically) trivial, $Q$ need not be a trivial bundle on $X\t\cS^1$, as it can have nontrivial topological twisting in the $\cS^1$ directions. Changing the loop $\ga$ by smooth homotopies deforms $Q,q,\nabla_Q^X$ smoothly, and so preserves the pair $(Q,q)$ up to isomorphism. This gives a well-defined map $[\ga]\mapsto[Q,q]$ from objects (a) to objects~(b). 

Conversely, given $[Q,q]$ choose a representative $(Q,q)$ and a partial connection $\nabla_Q^X$ on $Q$ in the $X$ directions in $X\t\cS^1$ with $\nabla_Q^X\vert_{X\t\{1\}}=\nabla^0$, and define $\ga:\cS^1\ra\B_P$ by $\ga(z)=[\nabla_Q^X\vert_{X\t\{z\}}]$. This is well defined as $Q\vert_{X\t\{z\}}$ is noncanonically isomorphic to $P$, since $Q\vert_{X\t\{1\}}\cong P$. Then $\ga$ is a smooth loop in $\B_P$ with $\ga(1)=[\nabla^0]$, so $[\ga]\in\pi_1(\B_P)$. The space of partial connections $\nabla_Q^X$ on $Q$ is an infinite-dimensional affine space, so any two choices $\nabla_Q^X,\ti\nabla_Q^X$ are joined by a smooth path, and the corresponding loops $\ga,\ti\ga$ are smoothly homotopic, giving $[\ga]=[\ti\ga]$. Hence the inverse map $[Q,q]\mapsto[\ga]$ is defined, and (a),(b) are in 1-1 correspondence.

Now let $(Q,q)$ be as in (b). Choose a connection $\nabla_Q$ on $Q\ra X\t\cS^1$. For each $x\in X$, consider the path $\de_x:[0,2\pi]\ra X\t\cS^1$ mapping $\de_x:\th\mapsto(x,e^{i\th})$. The holonomy of $\nabla_Q$ around $\de_x$ is a smooth map $\Hol_{\de_x}(\nabla_Q):Q\vert_{\ga_x(0)}\ra Q\vert_{\ga_x(2\pi)}$ which is equivariant under the $\SU(4)$-actions on $Q\vert_{\de_x(0)},Q\vert_{\de_x(2\pi)}$ from the principal $\SU(4)$-bundle. In this case $\de_x(0)=\de_x(2\pi)=(x,1)$, and $q$ identifies $Q\vert_{\de_x(0)}=Q\vert_{\de_x(2\pi)}\cong\SU(4)$, where $\SU(4)$ acts by left multiplication on itself. 

Hence $q$ identifies $\Hol_{\de_x}(\nabla_Q)$ with a smooth map $\SU(4)\ra\SU(4)$ equivariant under left multiplication by $\SU(4)$, which must be right multiplication by some $\Phi(x)\in \SU(4)$. This defines the map $\Phi:X\ra\SU(4)$ in (c), which is smooth as $\nabla_Q$ is smooth. Any two connections $\nabla_Q,\nabla_Q'$ on $Q$ are smoothly homotopic, so $\Phi$ is unique up to homotopy, and $[\Phi]$ is unique. This defines the map $[Q,q]\mapsto[\Phi]$ from objects (b) to objects~(c).

Conversely, let $\Phi:X\ra\SU(4)$ be a smooth map. Let $\sim$ be the equivalence relation $0\sim 2\pi$ on $[0,2\pi]$, and identify $[0,2\pi]/\mathbin{\sim}$ with $\cS^1$ by $\th\mapsto e^{i\th}$. Hence we also identify $X\t[0,2\pi]/\mathbin{\sim}$ with $X\t\cS^1$. Define $Q'\ra X\t\cS^1$ to be the principal $\SU(4)$-bundle $\bigl(X\t[0,2\pi]\t\SU(4)\bigr)/\mathbin{\approx}$, where $\approx$ is the equivalence relation $(x,0,\ep)\approx (x,2\pi,\ep\,\Phi(x))$ for $x\in X$ and $\ep\in\SU(4)$, where the projection $Q'\ra X\t\cS^1$ maps $[x,\th,\ep]\mapsto[x,\th]\in X\t[0,2\pi]/\mathbin{\sim}\cong X\t\cS^1$, and the $\SU(4)$-action on $Q'$ is by left multiplication on the $\SU(4)$ factor. Define $q':Q'\vert_{X\t\{1\}}\,{\buildrel\cong\over\longra}\,(X\t\{1\})\t\SU(4)$ to map $[x,0,\ep]\mapsto[x,\ep]$. Changing $\Phi$ by smooth homotopy changes $(Q',q')$ by smooth isotopy, and hence by isomorphism, so we have a map $[\Phi]\mapsto[Q,q]$ from objects (b) to objects (c). It is easy to see this is inverse to the map $[Q,q]\mapsto[\Phi]$ above, so (b),(c) are in 1-1 correspondence. 

The rest of Step 2 is clear.

\subsection{\texorpdfstring{Step 3: The geometry of $\SU(4)$}{Step 3: The geometry of SU(4)}}
\label{ss23}

Let $Y_k\subset\SU(4)$ for $k=0,\ldots,3$ be as in \eq{ss2eq1}. Write $\Gr(\C^k,\C^3)$ for the Grassmannian of vector subspaces $V\subset\C^3$ with $V\cong\C^k$, a compact complex manifold of dimension $k(3-k)$. Define a map $\phi_k:Y_k\ra \Gr(\C^k,\C^3)$ by
\begin{equation*}
\phi_k(A)=\bigl\{(x_1,x_2,x_3)\in\C^3:A\bs x=-\bs x\quad\text{for}\quad	\bs x=(x_1,x_2,x_3,0)^T\bigr\},
\end{equation*}
where the right hand side is a $k$-dimensional subspace of $\C^3$ by \eq{ss2eq1}, and thus a point of $\Gr(\C^k,\C^3)$. Note that $\Gr(\C^1,\C^3)=\CP^2$ and $\phi_1$ is $\phi$ in Step~3(iv).

The fibre of $\phi_k$ over $\bigl\{(x_1,\ldots,x_k,0,\ldots,0):x_j\in\C\bigr\}$ is
\e
\begin{gathered}
\left\{\begin{pmatrix} -1 & 0 & 0 & 0 \\
0 & \ddots & 0 & 0 \\
0 & 0 & -1 & 0 \\
0 & 0 & 0 & (-1)^kB	
\end{pmatrix}
\begin{aligned} :\, & \text{$B\in\SU(4-k)$, $B$ has no}\\ 
&\text{eigenvectors in $\C^{3-k}\subset\C^{4-k}$} \\
&\text{with eigenvalue $(-1)^{k+1}$}\end{aligned}\right\}.	
\end{gathered}
\label{ss2eq8}
\e
This is diffeomorphic to an open subset of $\SU(4-k)$, the complement of a codimension 3 subset, and so is connected and simply-connected. By considering the action of $\SU(3)\subset\SU(4)$ on $\SU(4)$ by conjugation, which preserves $Y_k$ and acts on $\Gr(\C^k,\C^3)$, we see that $Y_k$ is an embedded submanifold of $\SU(4)$ and $\phi_k$ is a fibre bundle with fibre \eq{ss2eq8}. Hence
\begin{align*}
\dim Y_k&=\dim_\R\Gr(\C^k,\C^3)+\dim\SU(4-k)=2k(3-k)+(4-k)^2-1\\
&=15-k(k+2)=\dim\SU(4)-k(k+2),
\end{align*}
so the codimension of $Y_k$ is $k(k+2)$. As $\Gr(\C^k,\C^3)$ and \eq{ss2eq8} are connected, simply-connected and oriented, $Y_k$ is embedded, connected, simply-connected and oriented. This proves Step~3(i).

Part (ii) is obvious. For (iii), for each $A\in Y_0\subset \SU(4)$ define vectors $e_t^j(A)\in\C^4$ for $j=1,2,3$ and $t\in[0,1]$ by 
\begin{equation*}
e_t^j(A)=te_j+(1-t)Ae_j,	
\end{equation*}
where $e_1=(1\, 0\, 0\, 0)^T$, $e_2=(0\, 1\, 0\, 0)^T$, and $e_3=(0\, 0\, 1\, 0)^T$. We claim that for each $A\in Y_0$ and $t\in[0,1]$, the vectors $e_t^1(A),e_t^2(A),e_t^3(A)$ are $\C$-linearly independent in $\C^4$. For if not, there would exist $0\ne(x_1,x_2,x_3)\in\C^3$ such that 
\begin{equation*}
(1-t)A(x_1e_1+x_2e_2+x_3e_3)=-t(x_1e_1+x_2e_2+x_3e_3),
\end{equation*}
so that $-t/(1-t)$ is an eigenvalue of $A$. As eigenvalues of $A$ have norm 1, this forces $t=\ha$, so $A\bs x=-\bs x$ for $0\ne\bs x=(x_1,x_2,x_3,0)^T$, contradicting~\eq{ss2eq1}. 

Next define vectors $f_t^j(A)\in\C^4$ for $j=1,2,3$ and $t\in[0,1]$ by
\begin{gather*}
f_t^1(A)=\frac{e_t^1(A)}{\bmd{e_t^1(A)}},\qquad f_t^2(A)=\frac{e_t^2(A)-\an{e_t^2(A),f_t^1(A)}_\C f_t^1(A)}{\bmd{e_t^2(A)-\an{e_t^2(A),f_t^1(A)}_\C f_t^1(A)}},\\
f_t^3(A)=\frac{e_t^3(A)-\an{e_t^3(A),f_t^1(A)}_\C f_t^1(A)-\an{e_t^3(A),f_t^2(A)}_\C f_t^2(A)}{\bmd{e_t^3(A)-\an{e_t^3(A),f_t^1(A)}_\C f_t^1(A)-\an{e_t^3(A),f_t^2(A)}_\C f_t^2(A)}}.
\end{gather*}
That is, we use the Gram--Schmidt process to make $f_t^1(A),f_t^2(A),f_t^3(A)$ Hermitian orthonormal in $\C^4$. There is then a unique $f_t^4(A)$ such that the matrix $\Psi_t(A)=\bigl(f_t^1(A)\,\cdots\,f_t^4(A)\bigr)$ with columns $f_t^1(A),\ldots,f_t^4(A)$ lies in $\SU(4)$. Then $\Psi_t(A)\in\SU(4)$ depends smoothly on $A\in Y_0$ and $t\in[0,1]$, with $\Psi^0(A)=A$ and $\Psi^1(A)=\Id$, so $\Psi_t:Y_0\ra\SU(4)$ for $t\in[0,1]$ satisfies Step~3(iii).

For (iv), for simplicity let $A\in Y_1$ with $\phi(A)=[1,0,0]\in\CP^2$. Then
\begin{equation*}
A\begin{pmatrix} 1 & 0 & 0 & 0 \end{pmatrix}{}^T=\begin{pmatrix} -1 & 0 & 0 & 0 \end{pmatrix}{}^T.
\end{equation*}
Let $\de A\in T_A\SU(4)$, so that we think of $\de A$ as a small $4\t 4$ complex matrix, with 
$A+\de A$ an infinitesimal perturbation of $A$ in $\SU(4)$. Then 
\begin{equation*}
(A+\de A)\begin{pmatrix} 1 & 0 & 0 & 0 \end{pmatrix}{}^T=-\begin{pmatrix} -1+\de A_{11} & \de A_{21} & \de A_{31} & \de A_{41} \end{pmatrix}{}^T,
\end{equation*}
for $\de A_{j1}\in\C$ small, with $\de A_{11}\in i\R$ as $(A+\de A)$ preserves lengths. Then the fibre $\nu\vert_A\subset T_A\SU(4)$ at $A$ of the normal bundle $\nu$ of $Y_1$ in $\SU(4)$ may be identified with $\R\op\C$ with coordinates $(\Im(\de A_{11}),\de A_{41})$. 

Here $\Im(\de A_{11})$ measures the tangent direction in $\SU(4)$ which varies the eigenvalue $-1$ of $A$ to $e^{i\th}$ in $\cS^1$ close to $-1$, so we should think of $\de A_{11}$ as lying in $T_{-1}\cS^1=i\R$. And $\de A_{41}$ measures the tangent directions in $\SU(4)$ which vary the eigenspace $[x_1,x_2,x_3,0]\in\CP^3$ of $A$ normal to $\CP^2=\bigl\{[y_1,y_2,y_3,0] \in\CP^3\bigr\}$ in $\CP^3$, where $A\in Y_1$ must have a $-1$-eigenvector in $\CP^2$, so we should think of $\de A_{41}$ as lying in $\ti\nu\vert_{[1,0,0,0]}$, where $\ti\nu$ is the normal bundle of $\CP^2=\bigl\{[y_1,y_2,y_3,0]\in\CP^3\bigr\}$ in $\CP^3$. Note that $\ti\nu\cong\O(1)$, so~$\ti\nu\vert_{[1,0,0,0]}\cong\phi^*(\O(1))\vert_A$.

More generally, if $A\in Y_1$ with $\phi(A)=[x_1,x_2,x_3]\in\CP^2$ for $x_1,x_2,x_3\in\C$ with $\md{x_1}+\ms{x_2}+\ms{x_3}=1$ and $\de A\in T_A\SU(4)$, then we can identify $\nu\vert_A$ with $\R\op\C$ with coordinates $(y,z)$, where in matrix notation
\begin{align*}
y&=-i\begin{pmatrix} \bar x_1 & \bar x_2 & \bar x_3 & 0\end{pmatrix}\de A  \begin{pmatrix} x_1 & x_2 & x_3 & 0\end{pmatrix}{}^T,\\
z&=\begin{pmatrix} 0 & 0 & 0 & 1\end{pmatrix}\de A  \begin{pmatrix} x_1 & x_2 & x_3 & 0\end{pmatrix}{}^T.	
\end{align*}
Multiplying the representative $(x_1,x_2,x_3)$ for $[x_1,x_2,x_3]$ by $e^{i\th}$ fixes $y$, but multiplies $z$ by $e^{i\th}$. So the invariant thing is to regard $z$ as lying in $\ti\nu\vert_{[x_1,x_2,x_3,0]}=\phi^*(\O(1))\vert_A$. This defines an isomorphism $\nu\cong\R\op\phi^*(\O(1))$, proving~(iv).

To prove \eq{ss2eq2}, by a well known calculation for all $m\ge 2$ we show that
\e
H^*(\SU(m),\Z)\cong \La_\Z[p_3,p_5,\ldots,p_{2m-1}]
\label{ss2eq9}
\e
by induction on $m$, where the first step $m=2$ follows from $\SU(2)\cong\cS^3$, and the inductive step from the Leray--Serre spectral sequence for the fibration $\SU(m-1)\hookra\SU(m)\twoheadrightarrow\cS^{2m-1}$. For \eq{ss2eq3}, the K\"unneth Theorem gives
\begin{equation*}
\mu^*(p_k)\in H^k(\SU(4)\t\SU(4),\Z)=\an{p_k\bt 1,1\bt p_k}_\Z,
\end{equation*}
so $\mu^*(p_k)=a_k\cdot (p_k\bt 1)+b_k\cdot (1\bt p_k)$ for $a_k,b_k\in\Z$, and $a_k=b_k=1$ follows by restricting $\mu$ to $\SU(4)\t\{\Id\}$ and $\{\Id\}\t\SU(4)$ in $\SU(4)\t\SU(4)$. 

For \eq{ss2eq4}, note that as $H^3(\SU(4),\Z)=\an{p_3}_\Z$ and $H^8(\SU(4),\Z)=\an{p_3\cup p_5}_\Z$, we have $[\ovY_1]=c\cdot\Pd(p_3)$ and $[\ovY_2]=d\cdot\Pd(p_3\cup p_5)$ for some $c,d\in\Z$. From \eq{ss2eq9} for $m=2,3,4$ we see that under the embeddings $\io:\SU(2)\hookra\SU(4)$ and $\jmath:\SU(3)\hookra\SU(4)$ given by
\begin{equation*}
\io:B\longmapsto \begin{pmatrix} 1 & 0 & 0 \\ 0 & 1 & 0 \\ 0 & 0 & B \end{pmatrix},\qquad
\jmath:C\longmapsto \begin{pmatrix} 1 & 0 \\ 0 & C \end{pmatrix},
\end{equation*}
we have $p_3\cdot\io_*([\SU(2)])=1$ and $(p_3\cup p_5)\cdot\jmath_*([\SU(3)])=1$. So using the intersection product $\bu$ on $H_*(\SU(4),\Z)$ we have $c=[\ovY_1]\bu\io_*([\SU(2)])$ and $d=[\ovY_2]\bu\jmath_*([\SU(3)])$. But $\ovY_1$ intersects $\io(\SU(2))$ transversely in one point $\diag(1,1,-1,-1)$ in $Y_1$, and $\ovY_2$ intersects $\jmath(\SU(3))$ transversely in one point $\diag(1,-1,-1,1)$ in $Y_2$, so these intersection numbers are $\pm 1$, and choosing orientations on $Y_1,Y_2$ appropriately we can ensure that $c=d=1$, proving~\eq{ss2eq4}. 

\subsection{\texorpdfstring{Step 4: Reduction to the case $\ka([\Phi])=0$}{Step 4: Reduction to the case κ([Φ])=0}}
\label{ss24}

Define maps $\la_3,\la_5,\la_7$ and $\ka$ on $[X,\SU(4)]$ as in Step 4. Let $\al\in H^5(X,\Z)$, so that $\Pd(\al)\in H_3(X,\Z)$. We can choose a compact, oriented, embedded 3-submanifold $W\subset X$ with $[W]=\Pd(\al)$. Write $\hat\nu\ra W$ for the normal bundle of $W$ in $X$. Now $W$ admits a spin structure, as any oriented 3-manifold does, and $X$ is spin, so $\hat\nu$ admits a spin structure on its fibres. Hence $\hat\nu$ is trivial, as any $\Spin(5)$-bundle on a 3-manifold is trivial. Thus we may choose a tubular neighbourhood $T$ of $W$ in $X$ and a diffeomorphism $T\cong W\t B^5$, where $B^5\subset\R^5$ is the open unit ball.

By Mimura and Toda \cite{MiTo} we have $\pi_5(\SU(4))\cong\Z$, \color{red}
and the natural map $\pi_5(\SU(4))\ra H_5(\SU(4),\Z)$ is an isomorphism.\color{black}\footnote{\color{red} Unfortunately, this is false. The natural map is multiplication by $4!=24:\Z\ra\Z$. This makes the proof of this step wrong. See the \hyperref[erratum]{Erratum} for more details.\color{black}} Thus there exists a smooth map $\Psi:\cS^5=\R^5\cup\{\iy\}\ra\SU(4)$ with $\Psi^*(p_5)\cdot[\cS^5]=1$. We may choose $\Psi$ with $\Psi\equiv\Id$ outside the ball $B^5_{1/2}$ of radius $\ha$ in $\R^5\subset\cS^5$. Define $\Phi':X\ra\SU(4)$ by $\Phi'\vert_{X\sm T}\equiv\Id$, and $\Phi'\vert_T$ is identified under $T\cong W\t B^5$ with the map $W\t B^5\ra\SU(4)$, $(w,b)\mapsto\Psi(b)$. As $\Psi\equiv 1$ on $B^5\sm B^5_{1/2}$, this $\Phi'$ is smooth.

Since $\Psi^*(p_5)\cdot[\cS^5]=1$, it follows that $\Pd\ci\la_5([\Phi'])=\Pd\ci\Phi^{\prime *}(p_5)=[W]=\Pd(\al)$, so $\la_5([\Phi'])=\al$. As $\Phi'\vert_{X\sm T}\equiv\Id$, and the morphism $H_3(X\sm T,\Z)\ra H_3(X,\Z)$ induced by the inclusion $X\sm T\hookra X$ is an isomorphism for dimensional reasons, we see that $\la_3([\Phi'])=0$, as we want. The rest of Step 4 is clear.

\subsection{\texorpdfstring{Step 5: A 5-submanifold $Z\subset X$ with $\Phi\simeq 1$ on $X\sm Z$}{Step 5: A 5-submanifold Z⊂X with Φ≃1 on X \ Z}}
\label{ss25}

Suppose $X$ is connected, and $[\Phi]\in[X,\SU(4)]$ with $\ka([\Phi])=0$. Choose a generic representative $\Phi:X\ra\SU(4)$ for $[\Phi]$. Then as in Step 5 at the beginning of \S\ref{ss2}, $\Phi$ is an embedding near $Y_1\amalg Y_2\amalg Y_3$ in $\SU(4)$, and $\Phi(X)\cap Y_2$ is a compact, oriented 0-manifold, that is, a finite set of points with signs $\pm 1$, and the number of points in $\Phi(X)\cap Y_2$ counted with signs is $\ka([\Phi])=0$. Thus we may write $\Phi(X)\cap Y_2=\{r_1,\ldots,r_k,s_1,\ldots,s_k\}$, where the $r_i$ have positive orientation and the $s_i$ negative orientation. There are unique disjoint $p_1,\ldots,p_k,q_1,\ldots,q_k\in X$ with $\Phi(p_i)=r_i$ and~$\Phi(q_i)=s_i$.

As $X$ is connected we may choose smooth embedded paths $\ga_i:[0,1]\ra X$ with $\ga_i(0)=p_i$ and $\ga_i(1)=q_i$ for $i=1,\ldots,k$, and as $\dim X>2$ we may choose $\ga_1([0,1]),\ldots,\ga_k([0,1])$ to be disjoint. Then $\Phi\ci\ga_i:[0,1]\ra\SU(4)$ are smooth embedded paths in $\SU(4)$ with $\Phi\ci\ga_i(0)=r_i$ and $\Phi\ci\ga_i(1)=s_i$, with $r_i,s_i$ the only intersection points of $\Phi\ci\ga_i([0,1])$ with~$Y_2$.

As $Y_2$ is connected we may choose smooth embedded paths $\de_i:[0,1]\ra Y_2$ with $\de_i(0)=r_i$ and $\de_i(1)=s_i$. Then $\Phi\ci\ga_i$ and $\de_i$ are both smooth paths $[0,1]\ra\SU(4)$ with end points $r_i,s_i$. Since $\SU(4)$ is simply-connected we may choose smooth homotopies $\ep_i:[0,1]^2\ra\SU(4)$ with $\ep_i(0,t)=\Phi\ci\ga_i(t)$, $\ep_i(1,t)=\de_i(t)$, $\ep_i(s,0)=r_i$, $\ep_i(s,1)=s_i$ for all $s,t\in[0,1]$, where we may take $\ep_i$ to be an embedding on $[0,1]\t(0,1)$, and to map to $Y_2$ only at $(s,0),(s,1)$ and~$(1,t)$.

We now use the `Whitney trick' (as used in the proof of the Whitney Embedding Theorem): we modify $\Phi$ in small open neighbourhoods of the paths $\ga_1([0,1]),\ldots,\ga_k([0,1])$ in $X$, deforming $\Phi$ along the disks $\ep_i([0,1]^2)$ in $\SU(4)$, so as to eliminate the intersection points $r_i,s_i$ of $\Phi(X)\cap Y_2$ in pairs. Thus we may perturb $\Phi$ in its homotopy class so that $\Phi(X)\cap Y_2=\es$. We can also suppose $\Phi$ is an embedding near $Y_1$ in $\SU(4)$, and $\Phi(X)$ intersects $Y_1$ transversely.

As in Step 5, it now follows that $Z=\{x\in X:\Phi(x)\in Y_1\}$ is a compact, oriented, embedded 5-submanifold in $X$ diffeomorphic to $\Phi(X)\cap Y_1$, and  defining $\psi:Z\ra\CP^2$ by $\psi=\phi\ci\Phi\vert_Z$, the normal bundle $\nu_Z$ of $Z$ in $X$ satisfies $\nu_Z\cong \R\op\psi^*(\O(1))$ as in \eq{ss2eq6}, and $w_2(Z)$ is the image in $H^2(Z,\Z_2)$ of the integral class $\psi^*(c_1(\O(1)))$ in~$H^2(Z,\Z)$.

Next suppose $X$ is connected. We will show that we can perturb $\Phi$ in its homotopy class to make $Z$ connected. Suppose first that $Z$ has two connected components $Z_0$ and $Z_1$. As $X$ is connected we can choose points $z_0\in Z_0$, $z_1\in Z_1$ and a smooth path $\ga:[0,1]\ra X$ with $\ga(0)=z_0$ and $\ga(1)=z_1$, where we suppose that $\ga([0,1])$ is embedded and meets $Z$ transversely only at $z_0,z_1$. Then $\Phi\ci\ga:[0,1]\ra\SU(4)$ is a smooth path in $\SU(4)$ with end-points $\Phi(z_0),\Phi(z_1)$ in $Y_1$. As $Y_1$ is connected we can choose a smooth path $\de:[0,1]\ra Y_1$ with $\de(0)=\Phi(z_0)$, $\de(1)=\Phi(z_1)$, where we suppose that $\de([0,1])$ is embedded and meets $\Phi(X)\cap Y_1=\Phi(Z)$ transversely only at~$\Phi(z_0),\Phi(z_1)$.

Then $\Phi\ci\ga([0,1]),\de([0,1])$ are two paths from $\Phi(z_0)$ to $\Phi(z_1)$ in $\SU(4)$, so $\Phi\ci\ga([0,1])\cup\de([0,1])$ is a piecewise-smooth embedded circle in $\SU(4)$. As $\SU(4)$ is simply-connected we may choose a smooth embedded 2-disc $D$ in $\SU(4)$, with boundary $\Phi\ci\ga([0,1])\cup\de([0,1])$, and corners at~$\Phi(z_0),\Phi(z_1)$.

In a similar way to the use of the `Whitney trick' above, we may modify $\Phi:X\ra\SU(4)$ in a small open neighbourhood of $\ga([0,1])$ in $X$ to a new $\Phi':X\ra\SU(4)$, where we deform $\Phi$ near $\ga([0,1])$ along the disc $D$ in $\SU(4)$, so that $\Phi'$ near $\ga([0,1])$ is close to the path $\de([0,1])$ in $\SU(4)$. We can arrange that $Z'=\Phi^{\prime -1}(Y_1)$ near $\ga([0,1])$ is a tube $[0,1]\t\cS^4$, where $\cS^4$ is a small 4-sphere. That is, we replace $Z=Z_1\amalg Z_2$ by the connected sum $Z'=Z_1\# Z_2$, joining $Z_1,Z_2$ by a narrow neck $[0,1]\t\cS^4$ close to $\ga([0,1])$ in $X$, and making $Z'$ connected. If $Z$ has $k>2$ connected components, we use the trick above $k-1$ times to make $Z'$ connected.

Finally, suppose $X$ is simply-connected. We will show that we can perturb $\Phi$ in its homotopy class to make $Z$ simply-connected. By surgery theory, as $Z$ is a compact, oriented 5-manifold, we can choose disjoint embedded circles $L_1,\ldots,L_k$ in $Z$, and small tubular neighbourhoods $T_1,\ldots,T_k$ of $L_1,\ldots,L_k$ with $T_i$ diffeomorphic to $L_i\t B^4$, such that deleting $T_1,\ldots,T_k$ and gluing in $k$ copies of $B^2\t\cS^3$ along the common boundary $\cS^1\t\cS^3$ of $L_i\t B^4$ and $B^2\t\cS^3$, gives a compact, simply-connected 5-manifold~$\hat Z$.

As $X$ is simply-connected, each circle $L_i$ in $Z\subset X$ may be written as $L_i=\pd D_i$, for $D_i\subset X$ a 2-disc in $X$. By perturbing $D_i$ generically we can suppose that $D_i$ is embedded, that it intersects $Z$ transversely only at $\pd D_i=L_i$, and that $D_1,\ldots,D_k$ are disjoint.

Also $\Phi(L_i)$ is an embedded circle in $Y_1$. As $Y_1$ is simply-connected, we may write $\Phi(L_i)=\pd E_i$, for $E_i\subset Y_1$ a 2-disc in $Y_1$. By perturbing $E_i$ generically we can suppose that $E_i$ is embedded, that it intersects $\Phi(Z)=\Phi(X)\cap Y_1$ transversely only at $\pd E_i=\Phi(L_i)$, and that $E_1,\ldots,E_k$ are disjoint.

We now have embedded 2-discs $\Phi(D_i),E_i$ in $\SU(4)$ with common boundary $\Phi(L_i)$, so $\Phi(D_i)\cup E_i$ is a piecewise-smooth $\cS^2$ in $\SU(4)$. Since $\pi_2(\SU(4))=0$ by \cite{MiTo}, we may choose smooth embedded 3-discs $F_1,\ldots,F_k$ in $\SU(4)$, with boundary $\pd F_i=\Phi(D_i)\cup E_i$, and a codimension 2 corner along~$\Phi(L_i)$.

Again, in a similar way to the use of the `Whitney trick' above, we may modify $\Phi:X\ra\SU(4)$ in small open neighbourhoods of $D_1,\ldots,D_k$ in $X$ to a new $\Phi':X\ra\SU(4)$, where we deform $\Phi$ near $D_i$ along the 3-disc $F_i$ in $\SU(4)$, so that $\Phi'$ near $D_i$ is close to the disc $E_i$ in $Y_1\subset\SU(4)$. We also need to consider the (trivial) normal bundles of $L_i$ and $D_i$ in $X$, and their images as subbundles of the (trivial) normal bundles of $\Phi(L_i)$ and $\Phi(D_i)$ in $\SU(4)$, and how these subbundles deform along $F_i$ to~$E_i$.

If we only deform $\Phi$ along $F_i$ to $\Phi'$ such that $\Phi'(D_i)=E_i$, then $\Phi'(X)$ may intersect $Y_1$ non-transversely along $E_i$, and $Z'=\Phi^{\prime -1}(Y_1)$ will not be a submanifold of $X$. However, if we deform $\Phi'$ a little way further, pushing $\Phi'(D_i)$ a little way beyond the boundary of $F_i$ at $E_i$, then $Z'=\Phi^{\prime -1}(Y_1)$ becomes a 5-submanifold of $X$ locally modelled near $D_i$ on $D_i\t\cS^3$, where the $\cS^3$ factors are small spheres in a (trivial) rank 4 subbundle of the normal bundle of $E_i$ in $Y_1$. That is, $Z'$ is diffeomorphic to the 5-manifold $\hat Z$ constructed above by surgery on $L_1,\ldots,L_k$, so $Z'$ is simply-connected.

Therefore if $X$ is simply-connected, we can perturb $\Phi$ in its homotopy class to make $Z$ simply-connected, completing Step~5.

\subsection{\texorpdfstring{Step 6: $\B_P$ is orientable if $X$ is simply-connected}{Step 6: ℬᵨ is orientable if X is simply-connected}}
\label{ss26}

Suppose $X$ is connected and simply-connected, let $[\ga]\in\pi_1(\B_P)$ correspond to $[\Phi]$ in $[X,\SU(4)]$ as in Step 2 with $\ka([\Phi])=0$ as in Step 4, and choose $\Phi,Z,\psi,\nu_Z$ with $Z$ connected and simply-connected as in Step 5. Then $Z$ is a compact, oriented 5-manifold, $\phi:Z\ra\CP^2$ is smooth, the normal bundle $\nu_Z$ of $Z$ in $X$ is $\nu_Z\cong\R\op\psi^*(\O(1))$, and $w_2(Z)$ is the image in $H^2(Z,\Z_2)$ of the integral class $\psi^*(c_1(\O(1)))$ in~$H^2(Z,\Z)$.

In the next proposition, using results of Crowley \cite{Crow} on the diffeomorphism classification of compact, simply-connected 5-manifolds, we will construct a compact, oriented, spin 8-manifold $X'$ with $H^{\rm odd}(X',\Z)=0$, and an embedding $\jmath:Z\hookra X'$, such that the normal bundle $\nu_Z'$ of $Z$ in $X'$ is $\nu'_Z\cong\R\op\psi^*(\O(1))$. Thus, tubular neighbourhoods of $Z$ in $X$ and $X'$ are diffeomorphic. 

\begin{prop} Suppose $Z$ is a compact, connected, simply-connected, oriented $ 5$-manifold, and\/ $L\ra Z$ is a complex line bundle, such that the second Stiefel--Whitney class $w_2(Z)$ is the image of\/ $c_1(L)$ under the projection $H^2(Z,\Z)\ra H^2(Z,\Z_2)$. Then there exist group isomorphisms
\begin{equation*}
H^2(Z,\Z)\cong\Z^r,\qquad H_2(Z,\Z)\cong\Z^r\op G\op G,
\end{equation*}
for some $r\ge 0$ and finite abelian group $G,$ such that the pairing $H^2(Z,\Z)\t H_2(Z,\Z)\ra\Z$ maps $(a_1,\ldots,a_r)\cdot (b_1,\ldots,b_r,g_1,g_2)\mapsto a_1b_1+\cdots+a_rb_r,$ and\/ $c_1(L)$ is identified with\/ $(k,0,\ldots,0)$ for some $k\in\Z$. Furthermore:
\begin{itemize}
\setlength{\itemsep}{0pt}
\setlength{\parsep}{0pt}
\item[{\bf(a)}] If\/ $k=0,$ so that\/ $c_1(L)=0$ and\/ $L$ is trivial, and\/ $w_2(Z)=0$ so $Z$ is spin, there exists an embedding $\io:Z\hookra\cS^6$ with trivial normal bundle $\R$. Composing this with the obvious embedding $\cS^6\hookra\cS^8$ mapping $(x_1,\ldots,x_7)\mapsto (x_1,\ldots,x_7,0,0)$ gives an embedding $\jmath:Z\hookra X'=\cS^8$ with trivial normal bundle\/~$\nu'_Z\cong\R^3\cong\R\op L$.

\item[{\bf(b)}] If\/ $k\ne 0$ is even, so that\/ $r\ge 1,$ and\/ $L$ is nontrivial, and\/ $w_2(Z)=0$ so $Z$ is spin, there exists an embedding $\io:Z\hookra\CP^1\t\cS^4$ with trivial normal bundle $\R,$ such that\/ $L\cong (\pi_{\CP^1}\ci\io)^*(\O(k)),$ for $\O(1)\ra\CP^1$ the standard line bundle, $\O(k)=\O(1)^{\ot^k},$ and\/ $\pi_{\CP^1}:\CP^1\t\cS^4\ra\CP^1$ the projection.

Define $X'\ra\CP^1\t\cS^4$ to be the $\CP^1$-bundle $\mathbb{P}\bigl(\pi_{\CP^1}^*(\O(0)\op\O(k))\bigr)$. This bundle has a natural section $[1,0]:\CP^1\t\cS^4\ra X'$ embedding $\CP^1\t\cS^4$ as a submanifold of\/ $X'$ with normal bundle\/ $\pi_{\CP^1}^*(\O(k))$. Hence $\jmath=[1,0]\ci\io$ is an embedding $\jmath:Z\hookra X'$ with normal bundle $\nu'_Z\cong\R\op L$. 
\item[{\bf(c)}] Let\/ $k$ be odd, so that\/ $r\ge 1,$ and\/ $L$ is nontrivial, and\/ $w_2(Z)\ne 0$ so $Z$ is not spin. Write\/ $\pi_{\CP^1}:Y\ra\CP^1$ for the nontrivial\/ $\cS^4$ bundle, constructed by writing $\CP^1=D^+\cup_{\cS^1}D^-$ as the union of closed\/ $2$-discs $D^\pm$ along their boundary $\cS^1,$ and defining $Y=(D^+\t\cS^4)\cup_{\cS^1\t\cS^4}(D^-\t\cS^4),$ where the boundaries\/ $\cS^1\t\cS^4$ of\/ $D^\pm\t\cS^4$ are glued using a map $\cS^1\ra\SO(5)$ representing the nontrivial element of\/~$\pi_1(\SO(5))\cong\Z_2$. 

Then there exists an embedding $\io:Z\hookra  Y$ with trivial normal bundle $\R,$ such that\/~$L\cong (\pi_{\CP^1}\ci\io)^*(\O(k))$.

Define $X'\ra Y$ to be the $\CP^1$-bundle $\mathbb{P}\bigl(\pi_{\CP^1}^*(\O(0)\op\O(k))\bigr)$. This bundle has a natural section $[1,0]:Y\ra X'$ embedding $Y$ as a submanifold of\/ $X'$ with normal bundle\/ $\pi_{\CP^1}^*(\O(k))$. Hence $\jmath=[1,0]\ci\io$ is an embedding $\jmath:Z\hookra X'$ with normal bundle $\nu'_Z\cong\R\op L$. 
\end{itemize}
In each of\/ {\bf(a)\rm--\bf(c)\rm,} $X'$ is compact, oriented and spin, with\/~$H^{\rm odd}(X',\Z)=0$.
\label{ss2prop1}	
\end{prop}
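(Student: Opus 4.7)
The plan is to invoke the Barden--Smale classification of compact simply-connected 5-manifolds, refined by Crowley \cite{Crow}, first to pin down the algebraic structure of $H_2(Z,\Z)$, and then to construct the embeddings $\jmath:Z\hookra X'$ case by case.

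For the algebraic preliminaries, I would use standard universal coefficient and Poincar\'e duality arguments to write $H^2(Z,\Z)\cong\Z^r$ free of rank $r\ge 0$, with the torsion subgroup $T$ of $H_2(Z,\Z)$ carrying a nondegenerate linking form. Barden's theorem classifies simply-connected oriented 5-manifolds by $(H_2(Z,\Z),w_2(Z))$, and shows that $T$ is of the form $G\op G$ or $G\op G\op\Z_2$, the extra $\Z_2$ occurring only in the building block (the Wu manifold) whose $w_2$ is not the mod-$2$ reduction of any integral class. The hypothesis of the proposition is exactly that $w_2(Z)$ is such a reduction, so the Wu summand is excluded and $T\cong G\op G$. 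A change of basis on $H^2(Z,\Z)=\Z^r$ then puts $c_1(L)$ into the form $(k,0,\ldots,0)$, with $k$ odd if and only if $w_2(Z)\ne 0$.

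For the embeddings, in case (a) Barden writes $Z$ as a connected sum of the spin building blocks $\cS^5$, $\cS^2\t\cS^3$, and various $M_j$ with $H_2(M_j)=\Z_{2^j}\op\Z_{2^j}$; each embeds in $\cS^6$ with trivial normal bundle, so connected-sum surgery assembles them into $\io:Z\hookra\cS^6$, and $\jmath$ is its composite with the equatorial $\cS^6\hookra\cS^8=X'$. In cases (b) and (c), the line bundle $L$ is classified by a map $f:Z\ra\CP^1$ representing the primitive generator $e_1$, and I would realize $(Z,f)$ as a codimension-one submanifold of $\CP^1\t\cS^4$ (case (b), $k$ even, $Z$ spin) or of the twisted $\cS^4$-bundle $Y\ra\CP^1$ (case (c), $k$ odd, $Z$ non-spin) with trivial normal bundle $\R$, using a bordism argument: the relevant 5-dimensional (spin or non-spin) bordism group over $\CP^1$ vanishes, so $(Z,f)$ bounds a 6-manifold, and surgery on this bounding manifold exhibits $Z$ as a codimension-one submanifold of the appropriate 6-dimensional ambient space. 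Composing with the zero section $[1,0]$ of $\mathbb{P}(\O\op\O(k))$, whose normal bundle is $\pi_{\CP^1}^*\O(k)$, produces $\jmath$ with normal bundle $\R\op L$.

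Finally, $X'$ is compact and oriented by construction. For spin, $\cS^8$ is spin in (a); in (b),(c) the standard Stiefel--Whitney calculus for a projective bundle reduces $w_2(X')$ to $\pi^*w_2(B)+\pi^*c_1(\O(k))\mod 2$, and these contributions are chosen to cancel (in (b) both vanish since $B=\CP^1\t\cS^4$ is spin and $k$ is even; in (c) the non-spinness of $Y$ exactly compensates the odd $k$). For $H^{\rm odd}(X',\Z)=0$, Leray--Hirsch gives $H^*(X',\Z)\cong H^*(B,\Z)\op\xi\cdot H^*(B,\Z)$ with $\deg\xi=2$, and each base $B$ has vanishing odd integral cohomology (for $\cS^8$ immediate; for $\CP^1\t\cS^4$ by K\"unneth; for $Y$ by the Serre spectral sequence, using that $\CP^1$ and $\cS^4$ have cohomology concentrated in even degrees). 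The main obstacle will be the middle step: the simultaneous realization of the embedding $\io$ with the prescribed trivial normal bundle in cases (b) and (c). The Barden--Crowley surgery must be applied with enough care to track not only the bordism class of $(Z,f)$ but also the topological type of its normal bundle, which is where the distinction between $\CP^1\t\cS^4$ and the twisted $Y$ enters.
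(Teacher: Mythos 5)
Your algebraic preliminaries, your case (a), and your closing verification that $X'$ is spin with $H^{\rm odd}(X',\Z)=0$ all match the paper's argument (the paper obtains $\io:Z\hookra\cS^6$ from Crowley's realization of $Z$ as the boundary of a regular neighbourhood of a $2$-complex in $\R^6$, which is the same underlying fact as your connected-sum-of-building-blocks version). The genuine gap is in cases (b) and (c). Your plan there is: choose $f:Z\ra\CP^1$ hitting the primitive generator, observe $(Z,f)$ is null-bordant, and let ``surgery on the bounding manifold'' exhibit $Z$ inside $\CP^1\t\cS^4$ or $Y$. Two problems. First, the existence of $f$ is not automatic: a degree-$2$ class is realized by a map to $\CP^\iy$, and compressing that map to $\CP^1$ meets an obstruction in $H^5(Z;\pi_4(\cS^2))\cong\Z_2$ (equivalently, the Poincar\'e dual $3$-manifold must have trivial normal bundle), which you do not address. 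Second, and more seriously, a null-bordism $(W,F)$ of $(Z,f)$ only yields an embedding of $Z$ with trivial normal bundle into the double $DW$; converting $DW$ into the \emph{specific} manifold $\CP^1\t\cS^4$ (resp.\ the twisted bundle $Y$) by surgeries in the complement of $Z$, compatibly with the reference map to $\CP^1$ and with control of $w_2$, is exactly the hard content of the step and would require the classification of simply-connected $6$-manifolds or an equivalent input. You flag this as ``the main obstacle'' but do not resolve it, so the proof of (b),(c) is incomplete as written.

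For comparison, the paper avoids bordism entirely by a connected-sum decomposition. Since $(H_2(Z,\Z),w_2(Z)\,\cdot\,)$ splits as $(\Z,k\bmod 2)\op(\Z^{r-1}\op G\op G,0)$, Barden--Crowley gives $Z\cong Z_1\# Z_2$ with $Z_1$ the trivial (resp.\ nontrivial) $\cS^3$-bundle over $\CP^1$, carrying all of $c_1(L)$ and $w_2(Z)$, and $Z_2$ spin with the remaining homology and vanishing restriction of $c_1(L)$. Then $Z_1$ embeds fibrewise as the equatorial $\cS^3$-subbundle of the corresponding $\cS^4$-bundle over $\CP^1$, $Z_2$ embeds in $\cS^6$ by case (a), and the ambient connected sum $(\CP^1\t\cS^4)\#\cS^6\cong\CP^1\t\cS^4$ (resp.\ $Y\#\cS^6\cong Y$) assembles these into $\io$; the isomorphism $L\cong(\pi_{\CP^1}\ci\io)^*(\O(k))$ is then read off from $\io^*$ on $H^2$, which also disposes of the existence of your map $f=\pi_{\CP^1}\ci\io$ a posteriori. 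I recommend replacing your bordism step with this decomposition argument.
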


\begin{proof} The Universal Coefficient Theorem implies that the torsions of $H_1(Z,\Z)$ and $H^2(Z,\Z)$ are isomorphic. But $H_1(Z,\Z)=0$ as $Z$ is simply-connected, so $H^2(Z,\Z)$ is torsion-free, and thus $H^2(Z,\Z)\cong\Z^r$ for $r=b^2(Z)$. We may choose the isomorphism $H^2(Z,\Z)\cong\Z^r$ to identify $c_1(L)$ with $(k,0,\ldots,0)$ for some $k\in\Z$, as any element of $\Z^r$ is conjugate to some $(k,0,\ldots,0)$ under $\SL(r,\Z)$. Then $H_2(Z,\Z)\cong \Z^r\op K$ for some finite abelian group $K$, such that the pairing $H^2(Z,\Z)\t H_2(Z,\Z)\ra\Z$ maps $(a_1,\ldots,a_r)\cdot (b_1,\ldots,b_r,k)\mapsto a_1b_1+\cdots+a_rb_r$. The results of Crowley \cite{Crow} discussed next imply that $K$ is of the form $G\op G$. This proves the first part of the proposition.

Crowley \cite{Crow} describes the classification of compact, connected, simply-conn\-ect\-ed 5-manifolds $Z$ up to diffeomorphism. To each such $Z$ we associate a pair $(\Ga,w)$ of a finitely generated abelian group $\Ga$ and a morphism $w:\Ga\ra\Z_2$, by $\Ga=H_2(Z,\Z)$ and $w(\ga)=w_2(Z)\cdot\ga$. Using results of Smale and Barden, Crowley notes that the map from diffeomorphism classes of 5-manifolds $Z$ to isomorphism classes of pairs $(\Ga,w)$ is injective. He then characterizes which pairs $(\Ga,w)$ lie in the image of this map, and in some cases gives an embedding $\io:Z\hookra Y$ into an explicit 6-manifold $Y$, and thus writes $Z=\pd\hat Y$ for a 6-manifold with boundary~$\hat Y$.

Any finitely generated abelian group $\Ga$ is of the form $\Z^r\op K$ for $K$ a finite abelian group, and $w:\Ga\ra\Z_2$ is the sum of morphisms $\Z^r\ra\Z_2$ and $K\ra\Z_2$. If $w_2(Z)$ lies in the image of $\Z^r\cong H^2(Z,\Z)\ra H^2(Z,\Z_2)$, as in our situation, the morphism $K\ra\Z_2$ is zero. This excludes many cases in Crowley's classification. In particular, Crowley allows either $K\cong G\op G$ or $K\cong G\op G\op\Z_2$ for $G$ a finite abelian group, but $K\cong G\op G\op\Z_2$ occurs only if $w\vert_K\ne 0$, and so does not happen in our case. This justifies~$K\cong G\op G$.

Note too that Crowley's classification is compatible with connected sums: if $Z$ corresponds to $(\Ga,w)$ with $(\Ga,w)\cong(\Ga_1,w_1)\op(\Ga_2,w_2)$ for $(\Ga_i,w_i)$ corresponding to $Z_i$, $i=1,2$, then~$Z\cong Z_1\# Z_2$.

For (a), given any $r\ge 0$ and finite abelian group $G$, Crowley \cite[\S 2.1]{Crow} constructs a compact, connected, simply-connected, spin 5-manifold $Z'$ with $H_2(Z')\cong\Z^r\op G\op G$ as follows: starting from $\Z^r\op G\op G$ he constructs a finite CW-complex $C$ with only 2- and 3-cells, chooses an embedding $C\hookra\R^6$, takes $D$ to be a regular open neighbourhood of $C$ in $\R^6$, so that $\kern .15em\ov{\kern -.15em D}$ is a compact 6-manifold with boundary, and defines $Z'=\pd\kern .15em\ov{\kern -.15em D}$. The theorem of Smale referred to above implies that for $Z$ as in the proposition with $k=0$, there is a diffeomorphism $\io:Z\ra Z'$. Then $\io:Z\hookra\R^6\subset\cS^6$ is an embedding with trivial normal bundle $\R$. The rest of (a) is immediate.

For (b)--(c), for $Z$ as in the proposition with $k\ne 0$ so $r\ge 1$, we may split
\begin{equation*}
\bigl(H_2(Z,\Z),w_2(Z)\,\cdot\,\bigr)\cong(\Z,k\mathbin{\rm mod} 2)\op (\Z^{r-1}\op G\op G,0).	
\end{equation*}
Then as above we have $Z\cong Z_1\# Z_2$ for $Z_1,Z_2$ with $H_2(Z_1,\Z)\cong\Z$, $w_2(Z_1)=k\mathbin{\rm mod} 2$, $H_2(Z_2,\Z)\cong\Z^{r-1}\op G\op G$, $w_2(Z_2)=0$. This is only valid if $Z_1,Z_2$ exist with these invariants, but we will justify this shortly.

In case (b), when $w_1(Z_1)=0$ as $k$ is even, we may take $Z_1=\CP^1\t\cS^3$, with an embedding $\io_1:Z_1\hookra\CP^1\t\cS^4$ from the identity on $\CP^1$ and the equator embedding $\cS^3\hookra\cS^4$. Part (a) gives a 5-manifold $Z_2$ with $H_2(Z_2,\Z)\cong\Z^{r-1}\op G\op G$ and $w_2(Z_2)=0$, and an embedding $\io_2:Z_2\hookra\cS^6$. Taking connected sums of both 5- and 6-manifolds gives an embedding
\begin{equation*}
Z_1\# Z_2\hookra (\CP^1\t\cS^4)\#\cS^6\cong \CP^1\t\cS^4.
\end{equation*}
For $Z$ as in part (b), the diffeomorphism $Z\cong Z_1\# Z_2$ gives an embedding $\io:Z\hookra\CP^1\t\cS^4$, with trivial normal bundle $\R$. The pullback $\io^*$ in
\begin{equation*}
\io^*:H^2(\CP^1\t\cS^4)\cong\Z\longra H^2(Z,\Z)\cong H^2(Z_1,\Z) \op H^2(Z_2,\Z)\cong \Z\op\Z^{r-1}
\end{equation*}
acts by $a\mapsto (a,0,\ldots,0)$. Hence $\io^*(c_1(\pi_{\CP^1}^*(\O(k)))=(k,0,\ldots,0)=c_1(L)$, which implies that $(\pi_{\CP^1}\ci\io)^*(\O(k))\cong L$. The rest of (b) is immediate.

In case (c), when $w_1(Z_1)=1\mathbin{\rm mod} 2$ as $k$ is odd so $Z_1$ is not spin, as in Crowley \cite[\S 2]{Crow} we take $\pi_{\CP^1}:Z_1\ra\CP^1$ to be the nontrivial $\cS^3$-bundle over $\CP^1$ (this is $X_\iy$ in Crowley's notation), defined as in the proposition with $\cS^3,\SO(4)$ in place of $\cS^4,\SO(5)$. For $\pi_{\CP^1}:Y\ra\CP^1$ as in the proposition, there is a natural embedding $\io_1:Z_1\hookra Y$ with $\pi_{\CP^1}\ci\io_1=\pi_{\CP^1}$, which embeds the $\cS^3$ fibres of $\pi_{\CP^1}:Z_1\ra\CP^1$ as equators in the $\cS^4$ fibres of $\pi_{\CP^1}:Y\ra\CP^1$. The rest of (c) follows (b), replacing $\pi_{\CP^1}:\CP^1\t\cS^4\ra\CP^1$ by~$\pi_{\CP^1}:Y\ra\CP^1$.

For the last part, in (a) we have $X'=\cS^8$, which is compact, oriented and spin, with $H^{\rm odd}(X',\Z)=0$. In (b) we have a fibration $\CP^1\hookra X'\twoheadrightarrow \CP^1\t\cS^4$, so $X'$ is compact and oriented as $\CP^1,\CP^1\t\cS^4$ are, and $H^{\rm odd}(X',\Z)=0$ by the Leray--Serre spectral sequence as $H^{\rm odd}(\CP^1,\Z)=H^{\rm odd}(\CP^1\t\cS^4,\Z)=0$. We can also show $w_2(X')=0$ as $k$ is even, so $X'$ is spin.

In case (c) we have fibrations $\CP^1\hookra X'\twoheadrightarrow Y$ and $\cS^4\hookra Y\twoheadrightarrow\CP^1$, so $X'$ is compact and oriented with $H^{\rm odd}(X',\Z)=0$ as in (b). It is less obvious that $X'$ is spin, since $Y$ is not. The composition $X'\ra Y\ra\CP^1$ induces a pullback map $\{0,1\}=H^2(\CP^1,\Z_2)\ra H^2(X',\Z_2)$. We can compute $w_2(X')$, and we find that $Y$ being non-spin, and $k$ being odd, both contribute the image of $1\in H^2(\CP^1,\Z_2)$ to $w_2(X')\in H^2(X',\Z_2)$, but the sum of these contributions is 0, so $w_2(X')=0$ and $X'$ is spin. This completes the proof.
\end{proof}

Now let us return to the situation of Step 6, with $Z\subset X$ a compact, connected, simply-connected, oriented, embedded 5-submanifold, and $\psi:Z\ra\CP^2$ a smooth map, such that the normal bundle $\nu_Z$ of $Z$ in $X$ is $\nu_Z\cong\R\op\psi^*(\O(1))$, and $w_2(Z)$ is the image of $c_1(\psi^*(\O(1)))$ in $H^2(Z,\Z_2)$. Proposition \ref{ss2prop1} with $L=\psi^*(\O(1))$ constructs a compact, oriented, spin 8-manifold $X'$ with $H^{\rm odd}(X',\Z)=0$ and an embedding $\jmath:Z\hookra X'$ with normal bundle $\nu'_Z\cong\R\op\psi^*(\O(1))$, so the normal bundles of $Z$ in $X$ and $X'$ agree. Hence we can choose tubular neighbourhoods $U,U'$ of $Z$ in $X,X'$ and an orientation-preserving diffeomorphism~$\io:U\ra U'$.

Choose Riemannian metrics $g,g'$ on $X,X'$ such that $\io$ identifies $g\vert_U$ with $g'\vert_{U'}$, and let $E_\bu,E'_\bu$ be the positive Dirac operators of $g,g'$. Since $Z$ and hence $U,U'$ are simply-connected, the spin structures on $U,U'$ are unique, and so $\io$ is also spin-preserving, and thus identifies $E_\bu\vert_U$ and~$E'_\bu\vert_{U'}$. 

As in Step 5 we have $\Phi:X\ra\SU(4)$ with $\Phi^{-1}(Y_1)=Z$ and $\Phi^{-1}(Y_2)=\Phi^{-1}(Y_3)=\es$, so that $X\sm Z=\Phi^{-1}(Y_0)$. But $Y_0$ retracts to $\{\Id\}$ in $\SU(4)$ by Step 3(iii). Thus we may deform $\Phi$ in its homotopy class to make $\Phi\equiv\Id$ except close to $Z$, so we can choose an open set $V$ in $X$ such that $X=U\cup V,$ and $Z\cap\ov V=\es,$ and deform $\Phi$ so that~$\Phi\vert_V\equiv\Id$.

Define $V'=\io(U\cap V)\cup(X'\sm U')$. Then $V'\subset X'$ is open with $X'=U'\cup V'$, and $\io$ identifies $U\cap V$ with $U'\cap V'$. Define $\Phi':X'\ra\SU(4)$ by $\Phi'\vert_{U'}=\Phi\ci\io^{-1}$ and $\Phi'\vert_{V'}\equiv\Id$. Then $\Phi'$ is smooth, and $\io$ identifies $\Phi\vert_U$ and~$\Phi'\vert_{U'}$.

Let $Q\ra X\t\cS^1$, $q:Q\vert_{X\t\{1\}}\,{\buildrel\cong\over\longra}\,(X\t\{1\})\t\SU(4)=P$ and $Q'\ra X'\t\cS^1$, $q':Q'\vert_{X'\t\{1\}}\,{\buildrel\cong\over\longra}\,(X'\t\{1\})\t\SU(4)=P'$ correspond to $\Phi:X\ra\SU(4)$ and $\Phi':X'\ra\SU(4)$ by the 1-1 correspondence between (b),(c) in Step 2. Then the diffeomorphism $\io:U\ra U'$ identifying $\Phi\vert_U$ and $\Phi'\vert_{U'}$ induces an isomorphism $\io\t\id_{\cS^1}:U\t\cS^1\ra U'\t\cS^1$, and an isomorphism $\si:Q\vert_{U\t\cS^1}\ra (\io\t\id_{\cS^1})^*(Q'\vert_{U'\t\cS^1})$ of principal $\SU(4)$-bundles over $U\t\cS^1$ compatible with $q\vert_{U\t\{1\}},q'\vert_{U'\t\{1\}}$. Also $\Phi\vert_V\equiv\Id$ and $\Phi'\vert_{V'}\equiv\Id$ induce trivializations $\tau:Q\vert_{V\t\cS^1}\ra V\t\cS^1\t\SU(4)$, $\tau':Q'\vert_{V'\t\cS^1}\ra V'\t\cS^1\t\SU(4)$, compatible with $q\vert_{V\t\{1\}},q'\vert_{V'\t\{1\}}$ and~$\si\vert_{(U\cap V)\t\cS^1}$.

Choose a partial connection $\nabla_Q^X$ on $Q\ra X\t\cS^1$ in the $X$ directions, such that $\nabla_Q^X\vert_{X\t\{1\}}$ is identified with $\nabla^0$ under $q$, and $\nabla_Q^X\vert_{V\t\cS^1}$ is identified with $\nabla^0\vert_{V\t\cS^1}$ under $\tau$. Then there is a unique partial connection $\nabla_{Q'}^{X'}$ on $Q'\ra X'\t\cS^1$ in the $X'$ directions, such that $\nabla_{Q'}^{X'}\vert_{X'\t\{1\}}$ is identified with $\nabla^0$ under $q'$, and $\nabla_{Q'}^{X'}\vert_{V'\t\cS^1}$ is identified with $\nabla^0\vert_{V'\t\cS^1}$ under $\tau'$, and $\nabla_Q^X\vert_{U\t\cS^1}$ is identified with $\nabla_{Q'}^{X'}\vert_{U'\t\cS^1}$ under~$\si$.

The 1-1 correspondence between (a),(b) in Step 2 identifies $Q,q,\nabla_Q^X$ and $Q',q',\nabla_{Q'}^{X'}$ with loops $\ga:\cS^1\ra\B_P$ and $\ga':\cS^1\ra\B_{P'}$ based at $[\nabla^0]$, where $P=X\t\SU(4)$ and $P'=X'\t\SU(4)$ are the trivial $\SU(4)$-bundles over~$X,X'$.

For each $z\in\cS^1$ we have principal $\SU(4)$-bundles $Q\vert_{X\t\{z\}}\ra X$, $Q'\vert_{X'\t\{z\}}\ab\ra X'$ with connections $\nabla_Q^X\vert_{X\t\{z\}}$, $\nabla_{Q'}^{X'}\vert_{X'\t\{z\}}$ representing points $\ga(z)\in\B_P$ and $\ga'(z)\in\B_{P'}$. Apply the Excision Theorem, Theorem \ref{ss2thm1}, to these, with $X,\ab X',E_\bu,E_\bu',\ab\SU(4),\ab Q\vert_{X\t\{z\}},\ab Q'\vert_{X'\t\{z\}},\ab U,\ab V,\ab U',V',\si\vert_{U\t\{z\}},\tau\vert_{V\t\{z\}},\tau'\vert_{V\t\{z\}}$ in place of $X^+,\ab X^-,\ab E_\bu^+,\ab E_\bu^-,\ab G,\ab P^+,\ab P^-,U^+,V^+,U^-,V^-,\si,\tau^+,\tau^-$. This gives an isomorphism of $\Z_2$-torsors~$\Om_z:\check O_P^{E_\bu}\vert_{\ga(z)}\,{\buildrel\cong\over\longra}\,\check O_{P'}^{E_\bu'}\vert_{\ga'(z)}$.

Theorem \ref{ss2thm1}(i) implies that $\Om_z$ varies continuously with $z\in\cS^1$. Hence the monodromy of $\check O_P^{E_\bu}$ around $\ga$ in $\B_P$, which is $\Th([\ga])$ in the notation of Step 2, equals the monodromy $\Th'([\ga'])$ of $\check O_{P'}^{E'_\bu}$ around $\ga'$ in $\B_{P'}$. But $\B_{P'}$ is orientable by \S\ref{ss21}(v) as $H^{\rm odd}(X',\Z)=0$, so $\Th'([\ga'])=1$, and thus $\Th([\ga])=\hat\Th([\Phi])=1$. Since this holds for all $[\Phi]\in[X,\SU(4)]$ with $\ka([\Phi])=0$, Step 4 implies that $\B_P$ is orientable when $X$ is simply-connected, completing Step~6.

\section{Proof of Theorem \ref{ss1thm2}}
\label{ss3}

\subsection{Background on H-spaces}
\label{ss31}

We first introduce H-spaces, following Hatcher \cite[\S 3.C]{Hatc} and Stasheff~\cite{Stas}.

\begin{dfn}
\label{ss3def1}
Let $X,Y$ be topological spaces. Continuous $f_0,f_1:X\ra Y$ are called {\it homotopic}, written $f_0\simeq f_1$, if there is a continuous $h:X\t[0,1]\ra Y$ with $h(x,0)=f_0(x)$ and $h(x,1)=f_1(x)$. Writing $f_t(x)=h(x,t)$, this means there is a continuous family $(f_t:X\ra Y)_{t\in[0,1]}$ interpolating between $f_0$ and $f_1$. We write this as $h:f_0\,{\buildrel\simeq\over\Longra}\,f_1$. Homotopy is an equivalence relation.

Write $\Topho$ for the category with objects topological spaces $X,Y$ and morphisms homotopy equivalence classes $[f]:X\ra Y$ of continuous maps $f:X\ra Y$. Then $f:X\ra Y$ is a {\it homotopy equivalence\/} in $\Top$ if $[f]:X\ra Y$ is an isomorphism in~$\Topho$.

An {\it H-space} is a triple $(X,e_X,\mu_X)$ where $X$ is a topological space, $e_X\in X$ is a base-point, and $\mu_X:X\t X \ra X$ is a continuous map such that the maps $x\mapsto\mu_X(e_X,x)$ and $x\mapsto\mu_X(x,e_X)$ are both homotopic to~$\id_X:X\ra X$. 

Write $X_0$ for the path-connected component of $X$ containing $e_X$. Then $X_0$ is determined uniquely by $\mu_X$, and if $e_X'\in X$ then $(X,e_X',\mu_X)$ is an H-space if and only if $e_X'\in X_0$. So the choice of base-point $e_X$ is not important, and we often omit it from the notation, referring to $(X,\mu_X)$ as an H-space. If $\mu_X$ is clear from context, we will call $X$ an H-space.

An H-space $(X,e_X,\mu_X)$ is called {\it commutative\/} (or {\it associative\/}) if $\mu_X$ is commutative (or associative) up to homotopy, that is, if $[\mu_X]$ is  commutative (or associative) in $\Topho$. From now on, all H-spaces in this paper will be assumed to be commutative and associative.

Let $(X,e_X,\mu_X)$ be an H-space. Then the set $\pi_0(X)$ of path-connected components of $X$ has the structure of a commutative, associative monoid, with identity $\pi_0(e_X)$ and multiplication $\pi_0(\mu_X)$. We say that $(X,e_X,\mu_X)$ is {\it grouplike\/} if $\pi_0(X)$ is an abelian group.

A {\it morphism of H-spaces}, or {\it H-map}, $f:(X,e_X,\mu_X)\ra(Y,e_Y,\mu_Y)$ is a continuous map $f:X\ra Y$ such that $f(e_X)\simeq e_Y$ (i.e.\ $f(e_X),e_Y$ are joined by a path in $Y$; equivalently, $f(X_0)\subseteq Y_0$) and~$f\ci\mu_X\simeq \mu_Y\ci(f\t f):X\t X\ra Y$. 	
\end{dfn}

Two more-or-less equivalent enhancements of H-spaces are $\Ga$-{\it spaces}, as in Segal \cite[\S 1]{Sega}, and $E_\iy$-{\it spaces}, as in May \cite{May2}. All the H-spaces we deal with in the proof of Theorem \ref{ss1thm2} may be enhanced to $\Ga$-spaces and $E_\iy$-spaces.

The next definition comes from May \cite[\S 1]{May1} and Caruso et al.~\cite[\S 1]{CCMT}.

\begin{dfn}
\label{ss3def2}
A {\it homotopy-theoretic group completion\/} of an H-space $(X,\mu_X)$ is an H-map $f:(X,\mu_X)\ra(Y,\mu_Y)$ to a grouplike H-space $(Y,\mu_Y)$ such that:
\begin{itemize}
\setlength{\itemsep}{0pt}
\setlength{\parsep}{0pt}
\item[(i)] The map on connected components $\pi_0(f) : \pi_0(X) \ra \pi_0(Y)$ is the group completion of the abelian monoid $\pi_0(X)$, and
\item[(ii)] The ring morphism $H_*(f):H_*(X,R)\ra H_*(Y,R)$ is localization by the action of $\pi_0(X)$ on $H_*(X,R)$ for all commutative rings $R$, where $H_*(X,R),H_*(Y,R)$ are $R$-algebras with multiplications~$H_*(\mu_X),H_*(\mu_Y)$.
\end{itemize}
\end{dfn}

May \cite[Lem.~2.1]{May1} shows that if an H-space $(X,\mu_X)$ can be enhanced to an $E_\iy$-space then it has a group completion. Caruso et al.~\cite[Prop.~1.2]{CCMT} prove a weak universal property of homotopy-theoretic group completions.

The next definition and proposition are new.

\begin{dfn}
\label{ss3def3}
Let $(X,\mu_X)$ be an H-space. 
\begin{itemize}
\setlength{\itemsep}{0pt}
\setlength{\parsep}{0pt}
\item[(a)] A {\it weak H-principal\/ $\Z_2$-bundle\/} on $(X,\mu_X)$ is a principal $\Z_2$-bundle $P\ra X$, such that there exists an isomorphism $p:P\bt_{\Z_2}P\ra\mu_X^*(P)$ of principal $\Z_2$-bundles on $X\t X$.

Two weak H-principal $\Z_2$-bundles $P,Q\ra X$ are {\it isomorphic\/} if there exists an isomorphism $P\cong Q$ of principal $\Z_2$-bundles on $X$.
\item[(b)] A {\it strong H-principal\/ $\Z_2$-bundle\/} $(P,p)$ on $(X,\mu_X)$ is a trivializable principal $\Z_2$-bundle $P\ra X$, with a choice of isomorphism $p:P\bt_{\Z_2}P\ra\mu_X^*(P)$, such that in principal $\Z_2$-bundles on $X\t X\t X$ we have
\ea
&(\mu_X\t\id_X)^*(p)\ci(p\bt\id_P)\cong (\id_X\t\mu_X)^*(p)\ci(\id_P\bt p):
\label{ss3eq1}\\
&P\bt_{\Z_2}P\bt_{\Z_2}P\longra
(\mu_X\ci(\mu_X\t\id_X))^*(P)\cong(\mu_X\ci(\id_X\t\mu_X))^*(P),
\nonumber
\ea
interpreted as for \eq{ss1eq9} using a homotopy $h:\mu_X\ci(\mu_X\t\id_X){\buildrel\simeq\over\Longra}\,\mu_X\ci(\id_X\t\mu_X)$, which exists as $\mu_X$ is homotopy associative, and \eq{ss3eq1} is independent of $h$ as $P$ is trivializable.

An {\it isomorphism\/} $\io:(P,p)\ra(Q,q)$ of strong H-principal $\Z_2$-bundles on $X$ is an isomorphism $\io:P\ra Q$ of $\Z_2$-bundles on $X$ with~$\io\ci p=q\ci(\io\bt\io)$.
\end{itemize}

Weak and strong H-principal $\Z_2$-bundles pull back along H-maps $f:(X,\mu_X)\ab\ra(Y,\mu_Y)$ in the obvious way. Note that a principal $\Z_2$-bundle $P\ra X$ (up to isomorphism) is equivalent to a map $f_P:X\ra B\Z_2$ (up to homotopy), and $P$ is a weak H-principal $\Z_2$-bundle if and only if $f_P$ is an H-map.
\end{dfn}

\begin{rem} Continuing Remark \ref{ss1rem5}, if we enhanced $(X,\mu_X)$ to a $\Ga$-space \cite{Sega} or $E_\iy$-space \cite{May2}, we could define the analogue of strong H-principal $\Z_2$-bundle $(P,p)$ without assuming $P$ is trivializable.
\label{ss3rem1}	
\end{rem}

\begin{prop} Suppose $f:(X,\mu_X)\ra(Y,\mu_Y)$ is a homotopy-theoretic group completion of H-spaces. Then
\begin{itemize}
\setlength{\itemsep}{0pt}
\setlength{\parsep}{0pt}
\item[{\bf(a)}] If\/ $P$ is a weak H-principal\/ $\Z_2$-bundle on $X,$ there exists a weak H-principal\/ $\Z_2$-bundle $Q$ on $Y,$ unique up to isomorphism, with\/ $P\cong f^*(Q)$. 
\item[{\bf(b)}] If\/ $(P,p)$ is a strong H-principal\/ $\Z_2$-bundle on $X,$ there exists a strong H-principal\/ $\Z_2$-bundle $(Q,q)$ on $Y,$ unique up to canonical isomorphism, with an isomorphism\/ $\io:(P,p)\ra f^*(Q,q)$. 
\end{itemize}
\label{ss3prop1}	
\end{prop}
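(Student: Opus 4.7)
The strategy is to convert principal $\Z_2$-bundles into classifying maps and then invoke the universal property of the homotopy-theoretic group completion. Recall that $B\Z_2\simeq K(\Z_2,1)$ admits the structure of an abelian topological group, so $(B\Z_2,\mu_{B\Z_2})$ is a grouplike, commutative, associative H-space. On any topological space $Z,$ isomorphism classes of principal $\Z_2$-bundles correspond bijectively to homotopy classes of continuous maps $Z\ra B\Z_2,$ and as noted at the end of Definition \ref{ss3def3}, a weak H-principal $\Z_2$-bundle on $X$ corresponds (up to isomorphism) to a homotopy class of H-maps $f_P:(X,\mu_X)\ra(B\Z_2,\mu_{B\Z_2})$.

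For part {\bf(a)}, pick an H-map $f_P$ representing $P$. Since $(B\Z_2,\mu_{B\Z_2})$ is grouplike, the weak universal property of the homotopy-theoretic group completion in Caruso et al.\ \cite[Prop.~1.2]{CCMT} applies and yields an H-map $g:Y\ra B\Z_2,$ unique up to homotopy, with $g\ci f\simeq f_P$. Taking $Q\ra Y$ to be the weak H-principal $\Z_2$-bundle corresponding to $g,$ we then have $f^*(Q)\cong P$ as weak H-principal $\Z_2$-bundles, because $f^*(Q)$ is classified by $g\ci f\simeq f_P$; and the uniqueness of $g$ up to homotopy translates into the uniqueness of $Q$ up to isomorphism.

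For part {\bf(b)}, the strong datum $(P,p)$ rigidifies the H-structure by specifying an isomorphism $p$ satisfying the associativity condition \eq{ss3eq1}, so we require a coherent lift rather than a mere existence statement. The cleanest approach, which I would pursue, is the enhancement anticipated in Remark \ref{ss3rem1}: in every situation in which this proposition is applied in \S\ref{ss3}, the H-space structures on $X,Y,B\Z_2$ upgrade canonically to $\Ga$-space (Segal \cite[\S 1]{Sega}) or $E_\iy$-space (May \cite{May2}) structures, $f$ lifts to a morphism of such objects, and a strong H-principal $\Z_2$-bundle on $X$ corresponds to a map of $\Ga$-spaces $X\ra B\Z_2$. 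The $\Ga$- or $E_\iy$-space group completion theorem then produces a factorization $g:Y\ra B\Z_2$ unique up to canonical homotopy of $\Ga$-maps, which translates back into the strong H-principal $\Z_2$-bundle $(Q,q)$ on $Y$ and the isomorphism $\io:(P,p)\,{\buildrel\cong\over\longra}\,f^*(Q,q)$ unique up to canonical isomorphism. An alternative, more concrete route is to fix a trivialization $P\cong X\t\Z_2,$ realise $p$ as a locally constant $\Z_2$-valued function on $X\t X$ satisfying the $2$-cocycle condition forced by \eq{ss3eq1}, and then use the hypotheses that $\pi_0(f)$ is a group completion and $H^*(f;\Z_2)$ is a localization to transfer this cocycle data to $Y\t Y$.

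The principal obstacle lies in part {\bf(b)}: the bare H-space universal property of \cite[Prop.~1.2]{CCMT} only guarantees the factorization $g$ up to (non-canonical) homotopy, which is insufficient to produce a canonical isomorphism between two possible lifts $(Q,q)$ and $(Q',q')$ of a strong H-principal $\Z_2$-bundle. Upgrading the ambient H-spaces to $\Ga$- or $E_\iy$-objects, which is possible in all the concrete applications in this paper, is exactly what is needed to close this coherence gap.
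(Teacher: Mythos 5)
Your part (a) is essentially the paper's argument, with one point you gloss over: Caruso et al.\ \cite[Prop.~1.2]{CCMT} only produces a \emph{weak} H-map $g:Y\ra B\Z_2$ with $g\ci f$ \emph{weakly} homotopic to $f_P$, i.e.\ the homotopies are only guaranteed after pulling back along maps from finite CW-complexes. The paper closes this by observing that a principal $\Z_2$-bundle on $Y$ is determined up to isomorphism by its pullbacks along all maps $\cS^1\ra Y$, and $\cS^1$ is a finite CW-complex; you should add this remark, but it is a small fix.

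Part (b) is where there is a genuine gap. You correctly diagnose that the bare universal property of \cite[Prop.~1.2]{CCMT} cannot deliver uniqueness up to \emph{canonical} isomorphism, but your proposed remedy --- upgrading $X$, $Y$, $B\Z_2$, $f$ and the bundle data to $\Ga$- or $E_\iy$-objects --- is asserted rather than carried out: you would need to verify that every H-space occurring in \S\ref{ss3} lifts, that $f$ lifts, and that a strong H-principal $\Z_2$-bundle is the same thing as a $\Ga$-map to $B\Z_2$; none of this is in the paper, which deliberately avoids $\Ga$-spaces (Remark \ref{ss3rem1} mentions the enhancement only as a hypothetical). The paper's actual proof of (b) is elementary and hinges on the trivializability of $P$ built into the definition of a strong H-principal $\Z_2$-bundle, which your main route never uses. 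Since $P$ is trivializable, $\pi_0(\pi):\pi_0(P)\ra\pi_0(X)$ is a $2{:}1$ map of sets; the product $p$ makes $\pi_0(P)$ an associative monoid via $(\al_1,\om_1)\star(\al_2,\om_2)=(\al_1+\al_2,p(\om_1\bt\om_2))$, with \eq{ss3eq1} giving associativity. One then forms the monoid group completion $\pi_0(P)^+$, checks that the induced map $\si:\pi_0(P)^+\ra\pi_0(Y)$ is again $2{:}1$ (using that $\pi_0(f)$ is the group completion of $\pi_0(X)$), and defines $Q=Y\t_{\pi_0(Y)}\pi_0(P)^+$ with $q$ induced by the group law on $\pi_0(P)^+$. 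Uniqueness up to canonical isomorphism then follows from the universal property of the monoid group completion $\pi_0(P)^+$. Your ``alternative, more concrete route'' points in roughly this direction, but as stated it is only a sketch, and the homology localization hypothesis you invoke there is not needed --- only the $\pi_0$ part of the group completion hypothesis is used.
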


\begin{proof} For (a), let $g:X\ra B\Z_2$ be the H-map (natural up to homotopy) corresponding to $P\ra X$. As $B\Z_2$ is a grouplike H-space, Caruso et al.\ \cite[Prop.~1.2]{CCMT} implies that there is a weak H-map $h:Y\ra B\Z_2$, natural up to weak homotopy, such that $g$ is weakly homotopic to $h\ci f$. Here continuous maps $a,b:S\ra T$ are {\it weakly homotopic\/} if $a\ci c\simeq b\ci c$ whenever $c:R\ra S$ is continuous with $R$ a finite CW-complex. For {\it weak H-maps\/} we replace the homotopy $f\ci\mu_X\simeq \mu_Y\ci(f\t f)$ in Definition \ref{ss3def1} by weak homotopy.

Let $Q\ra Y$ be the principal $\Z_2$-bundle corresponding to $h:Y\ra B\Z_2$. Then $h$ a weak H-map means that $Q\bt_{\Z_2}Q,\mu_Y^*(Q)$ become isomorphic on pull-back by $c:R\ra Y\t Y$ for any finite CW-complex $R$ and map $c$. Also $g$ weakly homotopic to $h\ci f$ means that $P,f^*(Q)$ become isomorphic on pull-back by $c:R\ra X$ for any finite CW-complex $R$ and $c$. And $h$ natural up to weak homotopy means that if $Q'$ is an alternative choice for $Q$ then $Q,Q'$ become isomorphic on pull-back by $c:R\ra Y$ for any finite CW-complex $R$ and~$c$. 

Now principal $\Z_2$-bundles $Q\ra Y$ are determined up to isomorphism by their pullbacks along all maps $c:\cS^1\ra Y$, where $\cS^1$ is a finite CW-complex. Hence $Q\bt_{\Z_2}Q\cong\mu_Y^*(Q)$, so $Q\ra Y$ is a weak H-principal bundle, and $P\cong f^*(Q)$, and any alternative choice $Q'$ has $Q\cong Q'$, so $Q$ is unique up to isomorphism. 
 
For (b), consider the map $\pi_0(\pi):\pi_0(P)\ra\pi_0(X)$. As $P\ra X$ is trivializable, there are two connected components of $P$ for each connected component of $X$, so $\pi_0(\pi)$ is a 2:1 map, which we regard as a principal $\Z_2$-bundle. We may write 
\begin{equation*}
\pi_0(P)=\bigl\{(\al,\om):\al\in\pi_0(X),\;\> \om\in\Ga(P\vert_\al)\bigr\},\qquad \pi_0(\pi):(\al,\om)\longmapsto\al,	
\end{equation*}
where $\Ga(\cdots)$ means global sections. Define a binary operation $\star$ on $\pi_0(P)$ by $(\al_1,\om_1)\star(\al_2,\om_2)=\bigl(\al_1+\al_2,p(\om_1\bt\om_2)\bigr)$. Then \eq{ss3eq1} implies that $\star$ is associative. It has identity $(0,\om_0)$, where $\om_0$ is unique with $(0,\om_0)\star(0,\om_0)=(0,\om_0)$. This makes $\pi_0(P)$ into an associative (though not necessarily commutative) monoid, and $\pi_0(\pi):\pi_0(P)\ra\pi_0(X)$ a monoid morphism.

As $f:(X,\mu_X)\ra(Y,\mu_Y)$ is a homotopy-theoretic group completion, $\pi_0(f):\pi_0(X)\ra\pi_0(Y)$ is the group completion of $\pi_0(X)$. Explicitly we may write
\e
\begin{split}
&\pi_0(Y)\cong\bigl\{(\al,\be):\al,\be\in\pi_0(X)\bigr\}/\!\sim,\quad\text{for $\sim$ the equivalence relation}\\
&(\al_1,\be_1)\sim(\al_2,\be_2)\;\>\text{if}\;\>\exists\ga,\de\in\pi_0(X),\;\al_1+\ga=\al_2+\de,\;\be_1+\ga=\be_2+\de.
\end{split}
\label{ss3eq2}
\e
Write $\rho:\pi_0(P)\ra\pi_0(P)^+$ for the group completion of $\pi_0(P)$. Then we have a commutative diagram of monoids:
\begin{equation*}
\xymatrix@C=150pt@R=15pt{ *+[r]{\pi_0(P)} \ar[d]^{\pi_0(\pi)} \ar[r]_\rho & *+[l]{\pi_0(P)^+} \ar[d]_\si \\
*+[r]{\pi_0(X)} \ar[r]^{\pi_0(f)} & *+[l]{\pi_0(Y).}}	
\end{equation*}
Here $\si$ exists by the universal property of the group completion $\pi_0(P)^+$. Using the explicit expression for $\pi_0(P)^+$ analogous to \eq{ss3eq2}, written in terms of pairs $(\al,\om)$, we see that the group completion process is compatible with the $\Z_2$-fibration structure, so $\si$ is also a principal $\Z_2$-bundle.

Define $Q=Y\t_{\Pi,\pi_0(Y),\si}\pi_0(P)^+$, where $\Pi:Y\ra\pi_0(Y)$ maps a point to its connected component. Then $Q$ is a trivializable principal $\Z_2$-bundle. Define an isomorphism $q:Q\bt_{\Z_2}Q\ra\mu_Y^*(Q)$ of principal $\Z_2$-bundles over $Y\t Y$ by
\begin{equation*}
q\bigl((y_1,p_1^+),(y_2,p_2^+)\bigr)=\bigl(\mu_Y(y_1,y_2),p_1^+*p_2^+\bigr),
\end{equation*}
where $y_i\in Y$, $p_i^+\in \pi_0(P)^+$ with $\si(p_i^+)=\Pi(y_i)$, and $*$ is the group operation in $\pi_0(P)^+$. Then associativity of $*$ implies that $q$ satisfies \eq{ss3eq1}, so $(Q,q)$ is a strong H-principal $\Z_2$-bundle. Define $\io:P\ra f^*(Q)$ by $\io:p\mapsto\bigl(\pi(p),\rho\ci\Pi(p)\bigr)$. Then $\rho$ a monoid morphism implies that $\io\ci p=f^*(q)\ci(\io\bt\io)$. Hence $\io:(P,p)\ra f^*(Q,q)$ is an isomorphism of strong H-principal $\Z_2$-bundles on $X$, giving existence in~(b).

For uniqueness, suppose $(Q',q')$ is a strong H-principal $\Z_2$-bundle $(Q,q)$ on $Y$  with an isomorphism $\io':(P,p)\ra f^*(Q',q')$. Then $Q'$ is trivializable, so $\pi_0(\pi):\pi_0(Q')\ra\pi_0(Y)$ is a principal $\Z_2$-bundle. Equation \eq{ss3eq1} implies that $\pi_0(q'):\pi_0(Q')\t\pi_0(Q')\ra\pi_0(Q')$ is associative, and as $\pi_0(Y)$ is a group, it is easy to see that $\pi_0(Q')$ is a group, and $\pi_0(\io'),\pi_0(\pi')$ are group morphisms. Consider the commutative diagram:
\e
\begin{gathered}
\xymatrix@C=75pt@R=6pt{ *+[r]{\pi_0(P)} \ar[dr]_{\pi_0(\io')} \ar[dd]^{\pi_0(\pi)} \ar[rr]_\rho && *+[l]{\pi_0(P)^+} \ar@{.>}[dl]_\tau^\cong \ar[dd]_\si \\
& \pi_0(Q')  \ar[dr]^{\pi_0(\pi')} \\
*+[r]{\pi_0(X)} \ar[rr]^{\pi_0(f)} && *+[l]{\pi_0(Y).}}	
\end{gathered}
\label{ss3eq3}
\e

Here the group morphism $\tau$ exists by the universal property of $\pi_0(P)^+$. Since $\si,\pi_0(\pi')$ are both principal $\Z_2$-bundles and $\rho,\pi_0(\io')$ are injective on $\Z_2=\Ker\pi_0(\pi)$, we see that $\tau$ is an isomorphism. Since $Q'\cong Y\t_{\Pi,\pi_0(Y),\si}\pi_0(Q')$ and $Q=Y\t_{\Pi,\pi_0(Y),\si}\pi_0(P)^+$, it follows that $\tau$ lifts to an isomorphism $\up:Q\ra Q'$. This satisfies $\up\ci q=q'\ci(\up\bt\up)$ as $\tau$ is a group morphism, so $\up$ is an isomorphism of strong H-principal $\Z_2$-bundles, and $q'=f^*(\up)\ci q$ since $\pi_0(\io')=\tau\ci\rho$. Also $\up$ is unique under these conditions, as $\tau$ is unique in \eq{ss3eq3}. Thus $Q$ is unique up to canonical isomorphism, proving~(b).
\end{proof}

\subsection{\texorpdfstring{Background from Joyce--Tanaka--Upmeier \cite{JTU}}{Background from Joyce--Tanaka--Upmeier [32]}}
\label{ss32}

The next three definitions come from~\cite[\S 2.3--\S 2.4]{JTU}.

\begin{dfn} We summarize some well known material which can be found in Milnor and Stasheff \cite{MiSt}, May \cite[\S\S 16.5, 23, 24]{May3}, and Husem\"oller et al.\ \cite[Part II]{HJJS}. Let $G$ be a topological group. A {\it classifying space\/} for $G$ is a topological space $BG$ and a principal $G$-bundle $\pi:EG\ra BG$ such that $EG$ is contractible. Classifying spaces exist for any $G$, and are unique up to homotopy equivalence. Classifying spaces have the property that if $X$ is a paracompact topological space and $P\ra X$ is a principal $G$-bundle, then there exists a continuous map $f_P:X\ra BG$, unique up to homotopy, and an isomorphism $P\cong f_P^*(EG)$ of principal $G$-bundles on~$X$. 
 
Write $B\U$ for the (homotopy) direct limit $B\U=\varinjlim_{n\ra\iy}B\U(n)$. Then $B\U\t\Z$ {\it is the classifying space for complex K-theory}. That is, as in May \cite[p.~204-5]{May3}, for compact topological spaces $X$ there is a natural bijection
\e
K^0(X)\cong [X,B\U\t\Z]=\pi_0\bigl(\Map_{C^0}(X,B\U\t\Z)\bigr).
\label{ss3eq4}
\e

Define $\Pi_n:B\U(n)\ra B\U\t\Z$ for $n\ge 0$ to map $B\U(n)\ra B\U$ from the direct limit $B\U=\varinjlim_{n\ra\iy}B\U(n)$, and to map $B\U(n)\ra n\in\Z$. Then if a principal $\U(n)$-bundle $P\ra X$ corresponds to $f_P:X\ra B\U(n)$, its K-theory class $\lb P\rb\in K^0(X)$ corresponds to~$\Pi_n\ci f_P:X\ra B\U\t\Z$.

The inclusion $\U(n)\t\U(n')\ra\U(n+n')$ mapping $(A,B)\mapsto\bigl(\begin{smallmatrix} A & 0 \\ 0 & B \end{smallmatrix}\bigr)$ induces a morphism $\mu_{n,n'}:B\U(n)\t B\U(n')\ra B\U(n+n')$. We interpret this in terms of direct sums: if $P\ra X$, $Q\ra X$ are principal $\U(n)$, $\U(n')$-bundles corresponding to $f_P:X\ra B\U(n)$, $f_Q:X\ra B\U(n')$ then $\mu_{n,n'}\ci(f_P,f_Q):X\ra B\U(n+n')$ corresponds to the principal $\U(n+n')$-bundle~$P\op Q\ra X$.

Let $\mu=\varinjlim_{n,n'\ra\iy}\mu_{n,n'}:B\U\t B\U\ra B\U$. Then $\mu$ is homotopy commutative and associative, and makes $B\U$ into an H-space. We can also define $\mu':(B\U\t\Z)\t(B\U\t\Z)\ra B\U\t\Z$ as the product of $\mu:B\U\t B\U\ra B\U$ and $+:\Z\t\Z\ra\Z$. Then $\mu'$ induces the operation of addition on $K^0(X)\cong[X,B\U\t\Z]$, from direct sum of vector bundles.
\label{ss3def4}	
\end{dfn}

\begin{dfn}
\label{ss3def5}
Let $X$ be a compact, connected manifold, and use the notation of Definition \ref{ss3def4}. Write $\cC=\Map_{C^0}(X,B\U\t\Z)$ for the topological space of continuous maps $X\ra B\U\t\Z$, with the compact-open topology. Equation \eq{ss3eq4} identifies the set $\pi_0(\cC)$ of path-connected components of $\cC$ with $K^0(X)$. Write $\cC_\al$ for the connected component of $\cC$ corresponding to $\al\in K^0(X)$ under \eq{ss3eq4}, so that $\cC=\coprod_{\al\in K^0(X)}\cC_\al$.

Define $\Psi:\cC\t\cC\ra\cC$ by $\Psi:(f,g)\mapsto\mu'\ci(f,g)$. Then $\Psi$ is homotopy commutative and associative, as $\mu'$ is, and makes $\cC$ into an H-space. Write $\Psi_{\al,\be}=\Psi\vert_{\cC_\al\t\cC_\be}:\cC_\al\t\cC_\be\ra\cC_{\al+\be}$ for $\al,\be\in K^0(X)$. By \cite[Prop.~2.24]{JTU}, $\cC_\al$ is homotopy equivalent to $\cC_0$ for all $\al\in K^0(X)$, and there is a canonical isomorphism~$\pi_1(\cC_\al)\cong K^1(X)$.

Now let $P\ra X$ be a principal $\U(n)$-bundle. Then we have a topological stack $\B_P=[\A_P/\G_P]$ as in \S\ref{ss11}. Define a topological space $\B_P^\cla=(\A_P\t E\G_P)/\G_P$, where $E\G_P\ra B\G_P$ is a classifying space for $\G_P$, and let $\pi^\cla:\B_P^\cla\ra\B_P$ be the obvious projection in $\Ho(\TopSta)$. Then $\pi^\cla$ is a fibration with contractible fibre $E\G_P$. Noohi \cite{Nooh2} develops a homotopy theory for topological stacks, and in Noohi's language $\B_P^\cla$ is a {\it classifying space\/} for $\B_P$, and $\pi^\cla$ is a homotopy equivalence. Classifying spaces are a functor $(-)^\cla:\Ho(\TopSta_{\bf hp})\ra\Topho$ from the category of `hoparacompact' topological stacks (which include $\B_P$) to topological spaces up to homotopy.

There is a universal principal $\U(n)$-bundle $U_P=(P\t\A_P)/\G_P\ra X\t\B_P$, so $(\id_X\t\pi^\cla)^*(U_P)\ra X\t\B_P^\cla$ is a principal $\U(n)$-bundle over a paracompact topological space, and corresponds to some $f_P:X\t\B_P^\cla\ra B\U(n)$. Write $\Si_P:\B_P^\cla\ra\Map_{C^0}(X,B\U(n))$ for the corresponding map. Then $f_P,\Si_P$ are unique up to homotopy.

Connected components of $\Map_{C^0}(X,B\U(n))$ correspond to isomorphism classes $[Q]$ of principal $\U(n)$-bundles $Q\ra X$. Write $\Map_{C^0}(X,B\U(n))_{[P]}$ for the component corresponding to $[P]$. Using the arguments of Donaldson--Kronheimer \cite[Prop.~5.1.4]{DoKr} and Atiyah--Bott \cite[Prop.~2.4]{AtBo}, we see that $\Si_P:\B_P^\cla\ra\Map_{C^0}(X,B\U(n))_{[P]}$
is a homotopy equivalence. Define $\Si_P^\cC:\B_P^\cla\ra\cC$ by $\Si_P^\cC:b\mapsto \Pi_n\ci\Si_P(b)$. Then $\Si_P^\cC$ maps $\B_P^\cla\ra\cC_\al$, where~$\al=\lb P\rb\in K^0(X)$. 

Suppose $Q\ra X$ is a principal $\U(n')$-bundle with $\lb Q\rb=\be\in K^0(X)$, so $P\op Q\ra X$ is a principal $U(n+n')$-bundle with $\lb P\op Q\rb=\al+\be$. As in \cite[Ex.~2.11]{JTU} we define a morphism of topological stacks $\Psi_{P,Q}:\B_P\t\B_Q\ra\B_{P\op Q}$ mapping $\Psi_{P,Q}:\bigl([\nabla_P],[\nabla_Q]\bigr)\mapsto[\nabla_P\op\nabla_Q]$. These $\Psi_{P,Q}$ are commutative and associative. Applying the classifying space functor \cite{Nooh2} gives $\Psi_{P,Q}^\cla:\B_P^\cla\t\B_Q^\cla\ra\B_{P\op Q}^\cla$, which is natural, commutative, and associative, up to homotopy.

Then the following diagram commutes up to homotopy:
\e
\begin{gathered}
\xymatrix@!0@C=145pt@R=40pt{
*+[r]{\B_P^\cla\t\B_Q^\cla} \ar@<1.5ex>@/^.7pc/[rr]^{\Si_P^\cC\t\Si_Q^\cC} \ar@<-1ex>@{}[dr]^\simeq \ar[d]^{\Psi_{P,Q}^\cla} \ar[r]_(0.44){\Si_P\t\Si_Q} & {\begin{subarray}{l}\ts \Map_{C^0}(X,B\U(n))_{[P]} \t \\ \ts \Map_{C^0}(X,B\U(n'))_{[Q]}\end{subarray}} \ar@<-1ex>@{}[dr]^\simeq\ar[r]_(0.55){(\Pi_n\ci)\t(\Pi_{n'}\ci)} \ar[d]^(0.55){\mu_{n,n'}\ci} & *+[l]{\cC_\al\t\cC_\be} \ar[d]_{\Psi_{\al,\be}=\mu'\ci}
\\
*+[r]{\B^\cla_{P\op Q}} \ar@<-1ex>@/_.5pc/[rr]_{\Si_{P\op Q}^\cC} \ar[r]^(0.33){\Si_{P\op Q}} & \Map_{C^0}(X,B\U(n\!+\!n'))_{[P\op Q]} \ar[r]^(0.6){\Pi_{n+n'}\ci} & *+[l]{\cC_{\al+\be}.}
}
\end{gathered}
\label{ss3eq5}
\e

We now have a commutative diagram
\ea
\begin{gathered}
\xymatrix@!0@C=250pt@R=45pt{
*+[r]{\coprod\limits_{\begin{subarray}{l}\text{iso. classes $[P]$ of }\\ \text{principal $\U(n)$-bundles}\\ \text{$P\ra X$, $n\ge 0$}\end{subarray}\!\!\!\!\!\!\!\!\!\!\!\!\!\!\!\!\!\!\!\!\!\!\!\!\!\!}\B_P^\cla\,\,\,\,\,\,\,\,\,\,} \ar[r]^(0.43){\coprod_{[P]}\Si_P}_(0.43)\simeq \ar[dr]_(0.43){\coprod_{[P]}\Si_P^\cC} & *+[l]{\begin{subarray}{l} \ts \coprod\limits_{[P],\;n\ge 0\!\!\!\!\!\!\!}\Map_{C^0}(X,B\U(n))_{[P]}\\ \ts =\coprod\limits_{\smash{n\ge 0}}\Map_{C^0}(X,B\U(n))\end{subarray}\;\>} \ar[d]_(0.55){\coprod_{n\ge 0}\Pi_n\ci} \\
& *+[l]{\cC=\Map_{C^0}(X,B\U\t\Z),}} 
\end{gathered}
\label{ss3eq6}
\ea
with the top row a homotopy equivalence. The three objects are H-spaces, with H-space multiplications $\coprod_{[P],[Q]}\Psi_{P,Q}^\cla$, $\coprod_{n,n'\ge 0}\mu_{n,n'}\ci$, and $\mu'\ci$, and the morphisms in \eq{ss3eq6} are H-space morphisms by \eq{ss3eq5}. Also $\cC$ is a grouplike H-space, as $\pi_0(\cC)=K^0(X)$ is a group.

Here on the left hand side of \eq{ss3eq6}, we sum over isomorphism classes $[P]$ of principal $\U(n)$-bundles $P\ra X$, and we pick one representative $P$ in each isomorphism class $[P]$ to give the corresponding $\B_P^\cla$. If $P'\in[P]$ is an alternative choice then $\B_P,\B_{P'}$ are canonically isomorphic.
\end{dfn}

\begin{prop} The H-space morphisms\/ $\coprod_{[P]}\Si_P^\cC$ and\/ $\coprod_{n\ge 0}\Pi_n\ci$ in\/ \eq{ss3eq6} are homotopy-theoretic group completions, in the sense of Definition\/~{\rm\ref{ss3def2}}.
\label{ss3prop2}	
\end{prop}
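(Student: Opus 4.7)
The plan is to reduce the proposition to a single assertion and then invoke the classical group completion of the K-theory H-space, extended to mapping spaces. First observe that the top horizontal morphism $\coprod_{[P]}\Si_P$ in \eq{ss3eq6} is a homotopy equivalence of H-spaces by the Donaldson--Kronheimer \cite{DoKr} and Atiyah--Bott \cite{AtBo} argument recalled in Definition \ref{ss3def5}. Since $\coprod_{[P]}\Si_P^\cC=\bigl(\coprod_{n\ge 0}\Pi_n\ci\bigr)\ci\bigl(\coprod_{[P]}\Si_P\bigr)$, and both conditions (i) and (ii) of Definition \ref{ss3def2} are preserved under precomposition with a homotopy equivalence of H-spaces (an isomorphism of monoids on $\pi_0$, and an isomorphism of $R$-algebras on $H_*(-,R)$ for every commutative ring $R$), it suffices to prove that the second map $\coprod_{n\ge 0}\Pi_n\ci\colon\coprod_{n\ge 0}\Map_{C^0}(X,B\U(n))\ra\cC$ is a homotopy-theoretic group completion.

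Condition (i) is essentially tautological given the construction of topological K-theory: $\pi_0\bigl(\coprod_{n\ge 0}\Map_{C^0}(X,B\U(n))\bigr)=\bigsqcup_{n\ge 0}[X,B\U(n)]$ is the commutative monoid of isomorphism classes of complex vector bundles on $X$ under direct sum, while $\pi_0(\cC)\cong K^0(X)$ is by definition the group completion of this monoid, and the map $\coprod_n\Pi_n\ci$ induces exactly the universal map. For the pointwise case $X=\mathrm{pt}$, condition (ii) reduces to the classical theorem that the natural map from $\coprod_{n\ge 0}B\U(n)$ (with H-space structure from block sum) to $B\U\t\Z\simeq\Om^\iy\mathrm{ku}$ realizes localization on $H_*(-,R)$; this is a well-known instance of the McDuff--Segal group completion theorem, and indeed is one of the standard constructions of the K-theory spectrum.

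To transfer condition (ii) to a general base manifold $X$, the cleanest approach is to work in the $\Ga$-space (equivalently $E_\iy$-space) framework of Segal \cite{Sega} and May \cite{May2}, where Whitney sum enhances $\coprod_n B\U(n)$ to a $\Ga$-space with associated connective spectrum $\mathrm{ku}$. Applied levelwise, $\Map_{C^0}(X,-)$ sends this $\Ga$-space to a new $\Ga$-space whose underlying H-space is $\coprod_n\Map_{C^0}(X,B\U(n))$ and whose associated infinite loop space is $\Om^\iy F(X_+,\mathrm{ku})\simeq\Map_{C^0}(X,B\U\t\Z)=\cC$, using the adjunction $\Map_{C^0}(X,\Om^\iy E)\simeq\Om^\iy F(X_+,E)$ for connective spectra $E$. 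The canonical map from a $\Ga$-space to its associated infinite loop space is always a homotopy-theoretic group completion in the sense of Definition \ref{ss3def2} (this is the form of McDuff--Segal proved in the $\Ga$-space setting), and this map is identified with $\coprod_n\Pi_n\ci$ up to homotopy equivalence, giving condition (ii).

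The main obstacle is this last identification: verifying rigorously that $\Map_{C^0}(X,-)$ commutes with the passage from a $\Ga$-space to its associated infinite loop space, i.e.\ that forming mapping spaces out of $X$ commutes with group completion. One cannot argue naively via $\Om B(-)$, since classifying spaces of topological monoids do not commute with mapping spaces; the $\Ga$-space formulation, where group completion is realized simply by passing to a spectrum and taking $\Om^\iy$, sidesteps this obstruction via the function spectrum adjunction quoted above.
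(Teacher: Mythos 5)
Your proposal is correct and takes essentially the same route as the paper: both reduce to showing that $\coprod_{n\ge 0}\Pi_n\ci$ is a homotopy-theoretic group completion (using that $\coprod_{[P]}\Si_P$ is a homotopy equivalence of H-spaces), and both then apply Segal's $\Ga$-space group completion pointwise to $\Map_{C^0}(X,-)$ and identify the resulting delooping with $\cC=\Map_{C^0}(X,B\U\t\Z)$. The only difference is presentational: the paper treats the commutation of $\Map_{C^0}(X,-)$ with $\Om B(-)$ as an ``elementary consequence of the definitions'' in its chain of equivalences \eq{ss3eq8}, while you justify exactly that step via the function-spectrum adjunction $\Map_{C^0}(X,\Om^\iy E)\simeq\Om^\iy F(X_+,E)$, which is the same argument made slightly more explicit.
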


\begin{proof} We use the $\Ga$-spaces of Segal \cite{Sega}. Given a $\Ga$-space $S$, Segal \cite[\S 4]{Sega} constructs a $\Ga$-space morphism $S\ra\Om BS$ which is a homotopy-theoretic group completion at the level of H-spaces, where $BS$ is the geometric realization of $S$ as a simplicial complex, and $\Om BS$ its loop space. If $X$ is compact and $S$ is a $\Ga$-space then $\Map_{C^0}(X,S)$ is a $\Ga$-space, by applying the $\Ga$-space operations pointwise in $X$. Now $\coprod_{n\ge 0}B\U(n)$ is a $\Ga$-space. Let $X$ be compact and connected. Then $\Map_{C^0}(X,\coprod_{n\ge 0}B\U(n))=\coprod_{n\ge 0}\Map_{C^0}(X,B\U(n))$ is a $\Ga$-space, with homotopy-theoretic group completion
\e
\ts\coprod_{n\ge 0}\Map_{C^0}(X,B\U(n))\longra \Om B\bigl(\Map_{C^0}(X,\coprod_{n\ge 0}B\U(n)\bigr).
\label{ss3eq7}
\e

We have homotopy equivalences
\e
\begin{split}
&\ts\Om B\bigl(\Map_{C^0}(X,\coprod_{n\ge 0}B\U(n)\bigr)\simeq
\Om\Map_{C^0}\bigl(X,B(\coprod_{n\ge 0}B\U(n))\bigr)\\
&\ts\qquad\simeq
\Map_{C^0}\bigl(X,\Om B(\coprod_{n\ge 0}B\U(n))\bigr)\simeq
\Map_{C^0}(X,B\U\t\Z),
\end{split}
\label{ss3eq8}
\e
where the first two are elementary consequences of the definitions, and the third $\Om B(\coprod_{n\ge 0}B\U(n))\simeq B\U\t\Z$ follows from 
Segal \cite[top p.~305 for $R=\C$]{Sega}. Combining \eq{ss3eq7}--\eq{ss3eq8} shows $\coprod_{n\ge 0}\Pi_n\ci$ in \eq{ss3eq6} is a homotopy-theoretic group completion. Then $\coprod_{[P]}\Si_P^\cC$ is too, as $\coprod_{[P]}\Si_P$ is a homotopy equivalence.
\end{proof}

\begin{dfn}
\label{ss3def6}
Continue in the situation of Definition \ref{ss3def5}, and let $E_\bu$ be a real elliptic operator on $X$. In \cite[Def.~2.22]{JTU} we construct a principal $\Z_2$-bundle $O^{E_\bu}\ra\cC$ with $O^{E_\bu}_\al:=O^{E_\bu}\vert_{\cC_\al}\ra\cC_\al$ for $\al\in K^0(X)$, unique up to canonical isomorphism, and isomorphisms of principal $\Z_2$-bundles on $\cC\t\cC$, $\cC_\al\t\cC_\be$:
\e
\begin{split}
\psi&:O^{E_\bu}\bt_{\Z_2}O^{E_\bu}\longra\Psi^*(O^{E_\bu}),\\
\psi_{\al,\be}:=\psi\vert_{\cC_\al\t\cC_\be}&:O_\al^{E_\bu}\bt_{\Z_2}O_\be^{E_\bu}\longra\Psi_{\al,\be}^*(O_{\al+\be}^{E_\bu}),
\end{split}
\label{ss3eq9}
\e
canonical if $O^{E_\bu}$ is trivializable, with the following properties: firstly, we have
\e
(\Psi\t\id_\cC)^*(\psi\ci(\psi\bt\id_{O^{E_\bu}})
\simeq(\id_\cC\t\Psi)^*(\psi)\ci(\id_{O^{E_\bu}}\bt\psi).
\label{ss3eq10}
\e

Here two sides of \eq{ss3eq10} are isomorphisms of $\Z_2$-bundles on $\cC\t\cC\t\cC$ from $O^{E_\bu}\bt_{\Z_2}O^{E_\bu}\bt_{\Z_2}O^{E_\bu}$ to $(\Psi\ci(\Psi\t\id_\cC))^*(O^{E_\bu})$ and to $(\Psi\ci(\id_\cC\t\Psi))^*(O^{E_\bu})$. By homotopy associativity there is a homotopy $\Psi\ci(\Psi\t\id_\cC)\simeq \Psi\ci(\id_\cC\t\Psi)$. As in Remark \ref{ss1rem5}, we can identify the two pullback $\Z_2$-bundles by parallel translation along this homotopy, and then \eq{ss3eq10} makes sense.

[{\bf Aside:} as in Remark \ref{ss1rem5}, if $O^{E_\bu}$ is nontrivial then the identification above may depend on the choice of homotopy $h:\Psi\ci(\Psi\t\id_\cC)\,{\buildrel\simeq\over\Longra}\,\Psi\ci(\id_\cC\t\Psi)$. To choose $h$ correctly, one should enhance the H-spaces to $\Ga$-spaces or $E_\iy$-spaces, which we do not do. However, we will only use \eq{ss3eq10} in the proof of Theorem \ref{ss1thm2}(c) when $O^{E_\bu}$ is assumed trivial, so this problem will not arise.]

Secondly, let $P\ra X$ be a principal $\U(n)$-bundle, with $\lb P\rb=\al\in K^0(X)$. As in \S\ref{ss11} we have a $\Z_2$-bundle $O_P^{E_\bu}\ra \B_P$. There is an isomorphism
\e
\si_P^\cC:(\pi^\cla)^*(O_P^{E_\bu})\longra(\Si_P^\cC)^*(O_\al^{E_\bu}),
\label{ss3eq11}
\e
which is natural if $O_\al^{E_\bu}$ is trivializable. (In fact if $2n>\dim X$ then $\pi_k(\Si_P^\cC):\pi_k(\B_P^\cla)\ra\pi_k(\cC_\al)$ is an isomorphism for $k=0,1$, which implies that \eq{ss3eq11} determines $O^{E_\bu}_\al\ra\cC_\al$ uniquely up to canonical isomorphism. This is how we begin constructing the~$O^{E_\bu}_\al$.)

Thirdly, let $Q\ra X$ be a principal $\U(n')$-bundle with $\lb Q\rb=\be\in K^0(X)$. Let $\nabla_P,\nabla_Q$ be connections on $P,Q$, and $\nabla_{P\op Q}=\nabla_P\op\nabla_Q$ the corresponding connection on $P\op Q$. Then there is a natural isomorphism
\begin{equation*}
\Ad(P\op Q)\cong \Ad(P)\op\Ad(Q)\op \bigl((P\t_{\U(n)}\ov\C^n)\ot_\C (Q\t_{\U(n')}\C^{n'})\bigr),
\end{equation*}
so the twisted operators in \eq{ss1eq2} satisfy
\e
D^{\nabla_{\Ad(P\op Q)}}\cong D^{\nabla_{\Ad(Q)}}\op D^{\nabla_{\Ad(P)}} \op D^{\nabla_{\smash{(P\t_{\U(n)}\ov\C^n)\ot_\C (Q\t_{\U(n')}\C^{n'})}}}.
\label{ss3eq12}
\e

Now $D^{\smash{\nabla_{(P\t_{\U(n)}\ov\C^n)\ot_\C (Q\t_{\U(n')}\C^{n'})}}}$ is a complex linear elliptic operator, so its real determinant has a natural orientation, since complex vector spaces have natural orientations considered as real vector spaces. Thus, taking orientations of real determinant line bundles in \eq{ss3eq12}, as in \cite[Ex.~2.11]{JTU} we obtain a natural isomorphism of principal $\Z_2$-bundles on~$\B_P\t\B_Q$:
\e
\smash{\psi_{P,Q}:O_P^{E_\bu}\bt_{\Z_2}O_Q^{E_\bu}\longra\Psi_{P,Q}^*(O_{P\op Q}^{E_\bu}).}
\label{ss3eq13}
\e
The $\psi_{P,Q}$ are associative, and commutative up to signs in \cite[(2.11)]{JTU}. Then we have a commutative diagram of $\Z_2$-bundles on $\B_P^\cla\t\B_Q^\cla$, parallel to~\eq{ss3eq5}:
\e
\begin{gathered}
\!\!\!\!\!\!\!\!\!\xymatrix@!0@C=148pt@R=45pt{
*+[r]{(\pi^\cla)^*(O_P^{E_\bu})\bt_{\Z_2}(\pi^\cla)^*(O_Q^{E_\bu})} \ar[rr]_{\si_P^\cC\bt\si_Q^\cC} \ar[d]^(0.4){(\pi^\cla)^*(\psi_{P,Q})} && *+[l]{(\Si_P^\cC)^*(O^{E_\bu}_\al)\bt_{\Z_2}(\Si_Q^\cC)^*(O^{E_\bu}_\be)} \ar@{.>}[d]_(0.4){(\Si_P^\cC\t\Si_Q^\cC)^*(\psi_{\al,\be})}
\\
*+[r]{\!\!\quad\begin{subarray}{l}\ts (\pi^\cla)^*(\Psi_{P,Q}^*(O_{P\op Q}^{E_\bu}))\!\simeq \\ \ts (\Psi_{P,Q}^\cla)^*\!\ci\!(\pi^\cla)^*(O_{P\op Q}^{E_\bu}) \end{subarray}} \ar[r]^(0.8){\begin{subarray}{l}(\Psi_{P,Q}^\cla)^* \\ (\si_{P\op Q}^\cC)\end{subarray}} & *+[r]{\begin{subarray}{l}\ts \qquad(\Psi_{P,Q}^\cla)^*\ci \\ \ts (\Si_{P\op Q}^\cC)^*(O^{E_\bu}_{\al+\be}) \end{subarray}} \ar[r]^\simeq_{\eq{ss3eq5}}
& *+[l]{\begin{subarray}{l}\ts (\Si_P^\cC\t\Si_Q^\cC)^*\ci \\ \ts \Psi_{\al,\be}^*(O^{E_\bu}_{\al+\be}).\end{subarray}\quad\!\!}
}\!\!\!\!\!\!\!\!\!
\end{gathered}
\label{ss3eq14}
\e
Here the two `$\simeq$' are isomorphisms relating pullbacks of the same bundle by homotopic morphisms, and are interpreted as for \eq{ss1eq9} and~\eq{ss3eq10}.

Fourthly, by \cite[Prop.~2.24(b)]{JTU}, $O^{E_\bu}$ is trivializable if and only if $O^{E_\bu}_\al\ra\cC_\al$ is trivializable for all $\al\in K^0(X)$, if and only if $O^{E_\bu}_0\ra\cC_0$ is trivializable.

In the obvious way, we say that $\cC,\cC_\al$ are {\it orientable\/} if $O^{E_\bu},O_\al^{E_\bu}$ are trivializable, and an {\it orientation\/} $o^{E_\bu}_\al$ for $\cC_\al$ is a trivialization~$O_\al^{E_\bu}\cong\cC_\al\t\Z_2$.
\end{dfn}         

Equation \eq{ss3eq11} shows that if $\cC_\al$ is orientable then $\B_P$ is orientable for any principal $\U(n)$-bundle $P\ra X$ with $\lb P\rb=\al$ in $K^0(X)$, and an orientation for $\cC_\al$ induces orientations on $\B_P$ for all such $P$. Hence, if we can construct orientations on $\cC_\al$ for all $\al\in K^0(X)$, we obtain orientations on $\B_P$ for all $\U(n)$-bundles $P\ra X$, for all~$n\ge 0$.

Comparing Definition \ref{ss3def3} with \eq{ss3eq9}--\eq{ss3eq10} yields:

\begin{lem}
\label{ss3lem1}
In Definition\/ {\rm\ref{ss3def6},} $O^{E_\bu}\ra\cC$ is a weak H-principal\/ $\Z_2$-bundle. If\/ $O^{E_\bu}$ is trivializable then $(O^{E_\bu},\psi)$ is a strong H-principal\/ $\Z_2$-bundle.
\end{lem}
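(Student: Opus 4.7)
The plan is simply to match the data guaranteed by Definition \ref{ss3def6} against the two definitions in Definition \ref{ss3def3}. No new constructions are needed.

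First I would address the weak H-principal $\Z_2$-bundle claim. Definition \ref{ss3def3}(a) asks only for the existence of \emph{some} isomorphism of principal $\Z_2$-bundles $p:O^{E_\bu}\bt_{\Z_2}O^{E_\bu}\ra\Psi^*(O^{E_\bu})$ on $\cC\t\cC$, where $\Psi$ plays the role of $\mu_X$. But such an isomorphism is precisely the morphism $\psi$ produced in \eq{ss3eq9}. So this half is immediate, with $p=\psi$, regardless of whether $O^{E_\bu}$ is trivializable.

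Second, I would verify the strong H-principal claim under the trivializability hypothesis. Definition \ref{ss3def3}(b) requires $O^{E_\bu}\ra\cC$ to be trivializable, together with an explicit choice of $p$ satisfying the associativity identity \eq{ss3eq1}. The trivializability is assumed. Taking $p=\psi$, the identity \eq{ss3eq1} becomes exactly the associativity statement \eq{ss3eq10} of Definition \ref{ss3def6}, once one identifies $\mu_X$ with $\Psi$. The interpretation of both sides via parallel transport along a homotopy $h:\Psi\ci(\Psi\t\id_\cC){\buildrel\simeq\over\Longra}\,\Psi\ci(\id_\cC\t\Psi)$ is the same in the two places, and as noted after \eq{ss3eq10} (and in Definition \ref{ss3def3}(b)), because $O^{E_\bu}$ is trivializable this parallel transport is independent of $h$, so the comparison makes sense. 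Hence $(O^{E_\bu},\psi)$ satisfies Definition \ref{ss3def3}(b) and is a strong H-principal $\Z_2$-bundle.

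There is no real obstacle here; the content of the lemma is bookkeeping, translating the properties of $\psi$ listed in Definition \ref{ss3def6} into the language of Definition \ref{ss3def3}. The only point that deserves an explicit mention in the proof is the compatibility of the two interpretations of \eq{ss3eq1} and \eq{ss3eq10} via the chosen homotopy $h$, and the fact that trivializability of $O^{E_\bu}$ makes the resulting identification canonical in both settings.
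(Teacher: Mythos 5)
Your proposal is correct and is exactly the argument the paper intends: the text introduces the lemma with ``Comparing Definition \ref{ss3def3} with \eq{ss3eq9}--\eq{ss3eq10} yields'', i.e.\ the proof is precisely the bookkeeping you describe, taking $p=\psi$ from \eq{ss3eq9} for the weak case and invoking \eq{ss3eq10} as the associativity condition \eq{ss3eq1} for the strong case. Your explicit remark about the two interpretations of the associativity identity via a homotopy $h$, and its independence of $h$ when $O^{E_\bu}$ is trivializable, matches the paper's own caveat following \eq{ss3eq10}.
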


\subsection{Background on Algebraic Geometry, proof of (a)}
\label{ss33}

We now give a bit more detail on the algebro-geometric side of Theorem~\ref{ss1thm2}(a).

Let $X$ be a smooth projective $\C$-scheme. Then we can consider coherent sheaves on $X$, as in Hartshorne \cite[\S II.5]{Hart} and Huybrechts and Lehn \cite{HuLe}. We write $\coh(X)$ for the abelian category of coherent sheaves on $X$, and $D^b\coh(X)$ for its bounded derived category, as a triangulated category with a full subcategory $\coh(X)\subset D^b\coh(X)$. See Gelfand and Manin \cite{GeMa} on triangulated categories, and Huybrechts \cite{Huyb} on~$D^b\coh(X)$.

Higher and derived $\C$-stacks form $\iy$-categories $\HSta_\C,\DSta_\C$, as in To\"en and Vezzosi \cite{Toen1,ToVe1,ToVe2} and Simpson \cite{Simp2}. We write $\Ho(\HSta_\C),\Ho(\DSta_\C)$ for their homotopy categories. Write $\bs\M$ for the derived moduli stack of objects in $D^b\coh(X)$, as a derived $\C$-stack, and $\M=t_0(\bs\M)$ for its classical truncation, as a higher $\C$-stack. These exist by \cite{ToVa}. 

Actually this is slightly misleading: as in \cite[\S 1]{ToVa}, just the triangulated category structure on $D^b\coh(X)$ is not sufficient to define the moduli stacks. Rather, $\bs\M,\M$ are moduli stacks of objects in a dg-category $\text{dgmod}^{\rm ft}\text{-}\O_X$, and $D^b\coh(X)\cong\Ho(\text{dgmod}^{\rm ft}\text{-}\O_X)$, so $\C$-points of $\bs\M,\M$ correspond to isomorphism classes of objects in $D^b\coh(X)$. But we will ignore this point.

We may write $\M$ as an internal mapping stack
\begin{equation*}
\smash{\M=\Map_{\HSta_\C}(X,\Perf_\C),}
\end{equation*}
where $\Perf_\C$ is a higher stack which classifies perfect complexes, as in To\"en and Vezzosi \cite[Def.~1.3.7.5]{ToVe2}, which is just $\M$ for $X=\Spec\C$ the point. There is a tautological morphism $u:X\t\M\ra\Perf_\C$ in $\Ho(\HSta_\C)$. On $\Perf_\C$ there is a tautological perfect complex $\cU_0^\bu$. Write $\cU^\bu=u^*(\cU_0^\bu)$, a perfect complex on $X\t\M$ which we call the {\it universal complex}. It has the property that $\cU^\bu\vert_{X\t\{[F^\bu]\}}\cong F^\bu$ for any object $F^\bu$ in~$D^b\coh(X)$.

There is a morphism $\Phi_0:\Perf_\C\t\Perf_\C\ra\Perf_\C$ mapping $([F^\bu],[G^\bu])\mapsto[F^\bu\op G^\bu]$ on $\C$-points. Write $\Phi:\M\t\M\ra\M$ for the composition
\begin{equation*}
\smash{\xymatrix@C=14pt{ \M\!\t\!\M \ar[r]^(0.25)\cong & \Map_{\HSta_\C}(X,\Perf_\C\!\t\!\Perf_\C) \ar[rr]^(0.55){\Phi_0\ci} && \Map_{\HSta_\C}(X,\Perf_\C) \ar@{=}[r] & \M. }}
\end{equation*}
Then the following diagram commutes:
\e
\begin{gathered}
\xymatrix@C=180pt@R=15pt{ *+[r]{X\t\M\t\M} \ar[r]_(0.48){[u\ci(\Pi_1,\Pi_2)]\t[u\ci(\Pi_1,\Pi_3)]} \ar[d]^{\id_X\t\Phi } & *+[l]{\Perf_\C\t\Perf_\C} \ar[d]_{\Phi_0} \\
*+[r]{X\t\M} \ar[r]^(0.48){u} & *+[l]{\Perf_\C.\!}  }
\end{gathered}
\label{ss3eq15}
\e
Also $\Phi$ is commutative and associative in $\Ho(\HSta_\C),$ with identity $[0]\in\M$. As $\Phi_0$ maps $([F^\bu],[G^\bu])\mapsto[F^\bu\op G^\bu]$, there is an isomorphism on~$\Perf_\C\t\Perf_\C$:
\e
\Phi_0^*(\cU_0^\bu)\cong\Pi_1^*(\cU_0^\bu)\op \Pi_2^*(\cU_0^\bu).
\label{ss3eq16}
\e
Pulling $\cU_0^\bu$ back round the two routes around \eq{ss3eq15} and using $\cU^\bu=u^*(\cU_0^\bu)$ and \eq{ss3eq16} gives an isomorphism on~$X\t\M\t\M$:
\e
(\id_X\t\Phi)^*(\cU^\bu)\cong (\Pi_1,\Pi_2)^*(\cU^\bu)\op (\Pi_1,\Pi_3)^*(\cU^\bu).	
\label{ss3eq17}
\e

As in \S\ref{ss14}(v), Simpson \cite{Simp1} and Blanc \cite[\S 3.1]{Blan} define a {\it topological realization functor\/} $(-)^\top:\Ho(\HSta_\C)\ra\Topho$, the analogue of the classifying space functor $(-)^\cla:\Ho(\TopSta)\ra\Topho$ discussed in \S\ref{ss32}. Applying this to $\M,\Phi$ gives a topological space $\M^\top$ and continuous map $\Phi^\top:\M^\top\t\M^\top\ra\M^\top$, which up to homotopy is commutative and associative with identity. That is, $(\M^\top,[0]^\top,\Phi^\top)$ is an H-space.

It follows from Blanc \cite[Th.s 4.7 \& 4.21]{Blan} that there is a homotopy equivalence $\Perf_\C^\top\simeq B\U\t\Z$, which lifts to an equivalence of symmetric spectra, and thus of H-spaces. As topological realizations matter only up to homotopy equivalence, we may take $\Perf_\C^\top=B\U\t\Z$, and $\Phi_0^\top=\mu'$. Then applying $(-)^\top$ to \eq{ss3eq15} yields a homotopy commutative diagram
\e
\begin{gathered}
\xymatrix@C=180pt@R=15pt{ *+[r]{X^\ran\t\M^\top\t\M^\top} \ar@{}[dr]^(0.74)\simeq\ar[r]_(0.48){\raisebox{-8pt}{$\st[u^\top\ci(\Pi_1,\Pi_2)]\t[u^\top\ci(\Pi_1,\Pi_3)]$}} \ar[d]^{\id_{X^\ran}\t\Phi^\top } & *+[l]{(B\U\t\Z)\t(B\U\t\Z)} \ar[d]_{\mu'} \\
*+[r]{X^\ran\t\M^\top} \ar[r]^(0.48){u^\top} & *+[l]{B\U\t\Z.\!}  }
\end{gathered}
\label{ss3eq18}
\e

Translating \eq{ss3eq18} into a diagram of mapping spaces $\Map_{X^0}(X^\ran,-)$ and using the notation $\cC,\Ga,\Psi$ in Theorem \ref{ss1thm2}(a), where $\Ga,\Psi$ correspond to $u^\top,\mu'$, gives a homotopy commutative diagram:
\begin{equation*}
\xymatrix@C=170pt@R=15pt{ *+[r]{\M^\top\t\M^\top} \ar@{}[dr]_(0.41)\simeq\ar[r]_(0.33){\Ga\t\Ga} \ar[d]^{\Phi^\top } & *+[l]{\cC\t\cC=\Map_{C^0}(X^\ran,B\U\t\Z)^2} \ar[d]_{\Psi=\mu'\ci} \\
*+[r]{\M^\top} \ar[r]^(0.33)\Ga & *+[l]{\cC=\Map_{C^0}(X^\ran,B\U\t\Z).\!}  }
\end{equation*}
This gives the homotopy $\Ga\ci\Phi^\top\simeq\Psi\ci(\Ga\t\Ga)$ claimed in Theorem \ref{ss1thm2}(a). The rest of Theorem \ref{ss1thm2}(a) is immediate.

The next two definitions define and study $\phi:O^\om\bt_{\Z_2} O^\om\ra\Phi^*(O^\om)$ in equation \eq{ss1eq8} of Theorem~\ref{ss1thm2}(c).

\begin{dfn} 
\label{ss3def7}
Let $X$ be a smooth projective $\C$-scheme, and use the notation above. Write $\Pi_1,\Pi_2,\Pi_3$ for the projection to the first--third factors of $X\t\M\t\M$. Define a perfect complex $\cExt^\bu$ on $\M\t\M$, the {\it Ext complex}, by
\e
\cExt^\bu=(\Pi_2,\Pi_3)_*\bigl[(\Pi_1,\Pi_2)^*(\cU^\bu)^\vee\ot(\Pi_1,\Pi_3)^*(\cU^\bu)\bigr],
\label{ss3eq19}
\e
using derived pushforward, pullback and tensor product functors as in Huybrechts \cite{Huyb}. Then for $F^\bu,G^\bu$ in $D^b\coh(X)$ and $k\in\Z$ we have
\e
H^k\bigl(\cExt^\bu\vert_{([F^\bu],[G^\bu])}\bigr)\cong \Ext^k(F^\bu,G^\bu):=\Hom_{D^b\coh(X)}\bigl(F^\bu,G^\bu[k]\bigr).
\label{ss3eq20}
\e

As in \S\ref{ss14}, we can consider the cotangent complex $\bL_{\bs\M}$ and its restriction $\bL_{\bs\M}\vert_\M$. It is well known that
\e
H^k\bigl(\bL_{\bs\M}\vert_{[F^\bu]}\bigr)\cong \Ext^{1-k}(F^\bu,F^\bu)^*.
\label{ss3eq21}
\e
Since $\bL_{\bs\Perf_\C}\vert_{\Perf_\C}\cong\cU_0^\bu\ot^L(\cU_0^\bu)^\vee[-1]$, using facts about (co)tangent complexes of mapping stacks in \cite[\S 2.1]{PTVV} we can prove that
\e
\bL_{\bs\M}\vert_\M\cong\De_\M^*\bigl((\cExt^\bu)^\vee[-1]\bigr),
\label{ss3eq22}
\e
as we would expect from comparing \eq{ss3eq20} and \eq{ss3eq21}, where $\De_\M:\M\ra\M\t\M$ is the diagonal morphism. Taking determinant line bundles in \eq{ss3eq22} gives a canonical isomorphism of line bundles on~$\M$:
\e
\smash{\det(\bL_{\bs\M}\vert_\M)\cong\De_\M^*\bigl(\det(\cExt^\bu)\bigr).}
\label{ss3eq23}
\e

If $F_1^\bu,\ldots,F_4^\bu\in D^b\coh(X)$ there is a natural isomorphism
\e
\Ext^k(F_1^\bu\op F_2^\bu,F_3^\bu\op F_4^\bu)\cong\ts\bigop_{i=1,2}^{j=3,4}\Ext^k(F_i^\bu,F_j^\bu).
\label{ss3eq24}
\e
Corresponding to this we have an isomorphism of complexes on~$\M\t\M\t\M\t\M$:
\e
\begin{split}
(\Phi\t\Phi)^*(\cExt^\bu)\cong\,& (\Pi_1,\Pi_3)^*(\cExt^\bu)\op (\Pi_1,\Pi_4)^*(\cExt^\bu)\op{}\\
&(\Pi_2,\Pi_3)^*(\cExt^\bu)\op (\Pi_2,\Pi_4)^*(\cExt^\bu).	
\end{split}
\label{ss3eq25}
\e
To prove \eq{ss3eq25}, note that 
\ea
&(\Phi\t\Phi)^*(\cExt^\bu)=(\Phi\t\Phi)^*\bigl((\Pi''_2,\Pi''_3)_*\bigl[(\Pi''_1,\Pi''_2)^*(\cU^\bu)^\vee\ot(\Pi''_1,\Pi''_3)^*(\cU^\bu)\bigr]\bigr)\nonumber\\
&\cong(\Pi'_1,\ldots,\Pi'_4)_*\ci(\id_X\t\Phi\t\Phi)^*\bigl[(\Pi''_1,\Pi''_2)^*(\cU^\bu)^\vee\ot(\Pi''_1,\Pi''_3)^*(\cU^\bu)\bigr]
\nonumber\\
&\cong(\Pi'_1,\ldots,\Pi'_4)_*\bigl[(\Pi_0',\Pi_1',\Pi_2')^*\ci(\id_X\t\Phi)^*(\cU^\bu)^\vee
\nonumber\\
&\qquad\qquad\qquad\ot (\Pi_0',\Pi_3',\Pi_4')^*\ci(\id_X\t\Phi)^*(\cU^\bu)\bigr]
\nonumber\\
&\cong(\Pi'_1,\ldots,\Pi'_4)_*\bigl[(\Pi_0',\Pi_1',\Pi_2')^*
[(\Pi''_1,\Pi''_2)^*(\cU^\bu)\op (\Pi''_1,\Pi''_3)^*(\cU^\bu)]^\vee
\nonumber\\
&\qquad\qquad\qquad \ot(\Pi_0',\Pi_3',\Pi_4')^*[(\Pi''_1,\Pi''_2)^*(\cU^\bu)\op (\Pi''_1,\Pi''_3)^*(\cU^\bu)]\bigr]
\label{ss3eq26}\\
&\cong(\Pi'_1,\ldots,\Pi'_4)_*\bigl[(\Pi_0',\Pi_1')^*(\cU^\bu)^\vee\ot(\Pi_0',\Pi_3')^*(\cU^\bu)
\nonumber\\
&\qquad\op(\Pi_0',\Pi_1')^*(\cU^\bu)^\vee\ot(\Pi_0',\Pi_4')^*(\cU^\bu)\op
\nonumber\\
&\qquad\op
(\Pi_0',\Pi_2')^*(\cU^\bu)^\vee\ot(\Pi_0',\Pi_4')^*(\cU^\bu)\bigr]
\nonumber\\
&\cong(\Pi_1,\Pi_3)^*(\cExt^\bu)\op (\Pi_1,\Pi_4)^*(\cExt^\bu)\op
(\Pi_2,\Pi_3)^*(\cExt^\bu)\op (\Pi_2,\Pi_4)^*(\cExt^\bu),
\nonumber
\ea
where we write $\Pi_1,\ldots,\Pi_4$, and $\Pi_0',\ldots,\Pi_4'$, and $\Pi_1'',\ldots,\Pi_3''$, for the projections to the factors of $\M\t\M\t\M\t\M$, and $X\t\M\t\M\t\M\t\M$, and $X\t\M\t\M$, respectively. Here we use \eq{ss3eq19} in the first and sixth steps, and \eq{ss3eq17} in the fourth, and properties of pullbacks and pushforwards in the rest.

Write $\De_\M^{13}\t\De_\M^{24}:\M\t\M\ra\M\t\M\t\M\t\M$ for the diagonal morphism acting on points by $([F^\bu],[G^\bu])\mapsto([F^\bu],[G^\bu],[F^\bu],[G^\bu])$. Then pulling back \eq{ss3eq27} by $\De_\M^{13}\t\De_\M^{24}$, taking determinant line bundles, and using \eq{ss3eq23}, yields an isomorphism of line bundles on $\M\t\M$:
\e
\begin{split}
\Phi^*(\det(\bL_{\bs\M}\vert_\M))\cong\,& \Pi_1^*(\det(\bL_{\bs\M}\vert_\M))\ot \det(\cExt^\bu)\ot{} \\
&\si^*(\det(\cExt^\bu))\ot\Pi_2^*(\det(\bL_{\bs\M}\vert_\M)),
\end{split}
\label{ss3eq27}
\e
where $\si:\M\t\M\ra\M\t\M$ exchanges the factors.
\end{dfn}

\begin{dfn}
\label{ss3def8}
Suppose $(X,\th)$ is an algebraic Calabi--Yau $2m$-fold, and use the notation of Definition \ref{ss3def7} for $X$. Pantev--To\"en--Vaqui\'e--Vezzosi \cite[Cor.~2.13]{PTVV} give $\bs\M$ a $(2-2m)$-shifted symplectic structure $\om$, so Definition \ref{ss1def6} defines a notion of orientation on $(\bs\M,\om)$, which form an algebraic principal $\Z_2$-bundle $\pi:O^\om\ra\M$. Write $O^\om_\al=O^\om\vert_{\M_\al}$ for each~$\al\in K^0(X^\ran)$.

Serre duality on the Calabi--Yau $2m$-fold $X$ gives isomorphisms
\e
\Ext^k(F^\bu,G^\bu)\cong \Ext^{2m-k}(G^\bu,F^\bu)^*
\label{ss3eq28}
\e
for $F^\bu,G^\bu$ in $D^b\coh(X)$ and $k\in\Z$. This corresponds to an isomorphism
\e
\cExt^\bu\cong\si^*\bigl((\cExt^\bu)^\vee[-2m]\bigr),
\label{ss3eq29}
\e
which may be proved from \eq{ss3eq19} using the fact that the dualizing complex of a Calabi--Yau $2m$-fold $X$ is $\O_X[2m]$. Taking determinant line bundles in \eq{ss3eq29} gives an isomorphism
\e
\det\cExt^\bu\cong\si^*\bigl(\det(\cExt^\bu)\bigr)^\vee.
\label{ss3eq30}
\e
Substituting this into \eq{ss3eq27} yields an isomorphism 
\e
\Phi^*(\det(\bL_{\bs\M}\vert_\M))\cong \Pi_1^*(\det(\bL_{\bs\M}\vert_\M))\ot \Pi_2^*(\det(\bL_{\bs\M}\vert_\M)).
\label{ss3eq31}
\e

The isomorphism $\om\cdot{}:\bT_{\bs\M}[-1]\ra\bL_{\bs\M}[1-2m]$ used in Definition \ref{ss1def6} to define $\io^\om:\det(\bL_{\bs\M}\vert_\M)\ra \det(\bL_{\bs\M}\vert_\M)^{-1}$ and the orientation bundle $O^\om\ra\M$ agrees with $\De_\M^*(\eq{ss3eq29})$ under \eq{ss3eq22}. Therefore  \eq{ss3eq31} is compatible with the isomorphisms $\io^\om$ used in Definition \ref{ss1def6} to define the orientation bundles $O^\om\ra\M$. Thus as in \eq{ss1eq8} it induces an isomorphism
\e
\phi:O^\om\bt_{\Z_2}O^\om\longra \Phi^*(O^\om)
\label{ss3eq32}
\e
of principal $\Z_2$-bundles on $\M\t\M$, which is an algebraic analogue of $\psi$ in \eq{ss3eq9}.

As for \eq{ss3eq10}, using analogues of \eq{ss3eq24} for $\Ext^k(F_1^\bu\op F_2^\bu\op F_3^\bu,F_4^\bu\op F_5^\bu\op F_6^\bu)$ and of \eq{ss3eq25}--\eq{ss3eq27} on $\M\t\M\t\M\t\M\t\M\t\M$, we can show that
\e
(\Phi\t\id_\M)^*(\phi)\ci(\phi\bt\id_{O^\om})
=(\id_\M\t\Phi)^*(\phi)\ci(\id_{O^\om}\bt\phi).
\label{ss3eq33}
\e
\end{dfn}

Applying $(-)^\top$ to $O^\om\ra\M$ gives a principal $\Z_2$-bundle $O^{\om,\top}\ra\M^\top$. Then applying $(-)^\top$ to \eq{ss3eq32}--\eq{ss3eq33} (where the analogue of \eq{ss3eq33} holds up to homotopy as for \eq{ss3eq10}) and using Definition \ref{ss3def3} yields:

\begin{lem}
\label{ss3lem2}
In Definition\/ {\rm\ref{ss3def8},} $O^{\om,\top}\ra\M^\top$ is a weak H-principal\/ $\Z_2$-bundle. If\/ $O^{\om,\top}$ is trivializable then $(O^{\om,\top},\phi^\top)$ is a strong H-principal\/ $\Z_2$-bundle.
\end{lem}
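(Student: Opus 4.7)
The plan is to apply the topological realization functor $(-)^\top:\Ho(\HSta_\C)\ra\Topho$ to the algebraic data constructed in Definition \ref{ss3def8} and then verify the axioms of Definition \ref{ss3def3} by comparing the resulting topological diagrams with the H-space structure $(\M^\top,[0]^\top,\Phi^\top)$ produced in the proof of Theorem \ref{ss1thm2}(a) in \S\ref{ss33}.

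First, I would apply $(-)^\top$ to the isomorphism of principal $\Z_2$-bundles $\phi:O^\om\bt_{\Z_2}O^\om\ra\Phi^*(O^\om)$ from \eqref{ss3eq32}. Since $(-)^\top$ sends algebraic principal $\Z_2$-bundles to topological principal $\Z_2$-bundles (\S\ref{ss14}(v)), and preserves fibre products over $\M\t\M$ as well as pullbacks along $\Phi$, this yields an isomorphism
\e
\phi^\top:O^{\om,\top}\bt_{\Z_2}O^{\om,\top}\longra(\Phi^\top)^*(O^{\om,\top})
\label{ss3eqProof1}
\e
of topological principal $\Z_2$-bundles on $\M^\top\t\M^\top$. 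The mere existence of \eqref{ss3eqProof1} is exactly what Definition \ref{ss3def3}(a) requires of a weak H-principal $\Z_2$-bundle relative to the H-space structure $\Phi^\top$ on $\M^\top$, so $O^{\om,\top}$ is a weak H-principal $\Z_2$-bundle. This settles the first claim.

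For the second claim, now assume that $O^{\om,\top}\ra\M^\top$ is trivializable. Apply $(-)^\top$ to the algebraic associativity identity \eqref{ss3eq33} of Definition \ref{ss3def8}. The functor $(-)^\top$ preserves pullbacks along the morphisms $\Phi\t\id_\M$ and $\id_\M\t\Phi$ up to canonical homotopy, and the two composites $\Phi\ci(\Phi\t\id_\M)$ and $\Phi\ci(\id_\M\t\Phi)$ realize to $\Phi^\top\ci(\Phi^\top\t\id_{\M^\top})$ and $\Phi^\top\ci(\id_{\M^\top}\t\Phi^\top)$, which are homotopic through the homotopy associativity of the H-space $\M^\top$. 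Translating \eqref{ss3eq33} into topological principal $\Z_2$-bundles on $\M^\top\t\M^\top\t\M^\top$ gives
\ea
&(\Phi^\top\t\id_{\M^\top})^*(\phi^\top)\ci(\phi^\top\bt\id_{O^{\om,\top}})\cong \nonumber \\
&\qquad(\id_{\M^\top}\t\Phi^\top)^*(\phi^\top)\ci(\id_{O^{\om,\top}}\bt\phi^\top),
\label{ss3eqProof2}
\ea
where the two sides are identified as in \eqref{ss1eq9} and \eqref{ss3eq1} using any homotopy between the two composites; because $O^{\om,\top}$ is assumed trivializable, this identification is independent of the chosen homotopy, as noted in Definition \ref{ss3def3}(b) and Remark \ref{ss1rem5}. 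This is exactly the condition \eqref{ss3eq1} required for $(O^{\om,\top},\phi^\top)$ to be a strong H-principal $\Z_2$-bundle on $(\M^\top,\Phi^\top)$, completing the proof.

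The only subtle point, and the one to handle carefully, is the homotopical bookkeeping in passing from the strict algebraic identity \eqref{ss3eq33} (which lives in a category of algebraic principal $\Z_2$-bundles on a strict fibre product) to the version \eqref{ss3eqProof2} in the homotopy category $\Topho$, where associativity of $\Phi^\top$ holds only up to a chosen homotopy and the resulting isomorphism of pullback bundles could in principle depend on this choice. The trivializability hypothesis is precisely what eliminates this dependence, so no further work is required once $(-)^\top$ has been applied.
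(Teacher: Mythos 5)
Your proposal is correct and follows essentially the same route as the paper, which disposes of this lemma in a single sentence: apply $(-)^\top$ to $O^\om\ra\M$ and to \eq{ss3eq32}--\eq{ss3eq33}, noting that the topological analogue of \eq{ss3eq33} holds only up to homotopy (as for \eq{ss3eq10}), and invoke Definition \ref{ss3def3}. Your additional care about the homotopy-independence of the identification of pullback bundles under the trivializability hypothesis is exactly the point the paper defers to Remark \ref{ss1rem5} and the parenthetical in its proof, so nothing is missing.
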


\subsection{\texorpdfstring{Set up of the proof, for smooth projective $\C$-schemes}{Set up of the proof, for smooth projective ℂ-schemes}}
\label{ss34}

Although Theorem \ref{ss1thm2}(b),(c) concern algebraic Calabi--Yau $4m$-folds $(X,\th)$, we write the first part of the proof in this section for a general smooth projective $\C$-scheme $X$, as we hope in future to use the same method to prove other things. Then in \S\ref{ss35} we specialize to Calabi--Yau~$4m$-folds.

Throughout we fix a smooth projective $\C$-scheme $X$, and use the notation of Theorem \ref{ss1thm1}(a) and \S\ref{ss31}, \S\ref{ss33} for $X$, and \S\ref{ss32} with $X^\ran$ in place of $X$.

The next definitions and results, until Lemma \ref{ss3lem5}, are heavily based on Friedlander and Walker~\cite[\S 2]{FrWa}.

\begin{dfn} Write $\IndSch_\C$ for the category of $\C$-{\it ind-schemes}, as in Gaitsgory and Rozenblyum \cite{GaRo1}, \cite[\S 2]{GaRo2}. Suppose $T_0\,{\buildrel\tau_0\over\longra}\,T_1\,{\buildrel\tau_1\over\longra}\,T_2\,{\buildrel\tau_2\over\longra}\,\cdots$ is a sequence of closed embeddings of $\C$-schemes. Then we may form the direct limit $T=\varinjlim_{k\ra\iy}T_k$ as a $\C$-ind-scheme. All the $\C$-ind-schemes in this paper are of this type. If $S$ is a finite type $\C$-scheme then 
\e
\Hom_{\IndSch_\C}(S,T)=\varinjlim\nolimits_{k\ra\iy}\Hom_{\Sch_\C}(S,T_k).
\label{ss3eq34}
\e
That is, every morphism $S\ra T$ in $\IndSch_\C$ factors through the inclusion $T_k\hookra T$ from the direct limit for $k\gg 0$.

As a simple example of a $\C$-ind-scheme, consider $\bA^0\,{\buildrel\al_0\over\longra}\,\bA^1\,{\buildrel\al_1\over\longra}\,\bA^2\,{\buildrel\al_2\over\longra}\,\cdots$ in $\Sch_\C$, where $\al_k:(t_1,\ldots,t_k)\mapsto (t_1,\ldots,t_k,0)$. Then $\bA^\iy:=\varinjlim_{k\ra\iy}\bA^k$ is a kind of infinite-dimensional affine space.

Every $\C$-scheme or $\C$-ind-scheme $S$ has an {\it underlying complex analytic space\/} $S^\ran$, a topological space. If $X$ is a smooth projective $\C$-scheme then $X^\ran$ has the structure of a compact complex manifold. If $T=\varinjlim_{k\ra\iy}T_k$ as above then $T^\ran=\varinjlim_{k\ra\iy}T_k^\ran$ as a direct limit in topological spaces.

We can consider $\C$-ind-schemes $T$ as examples of general $\C$-stacks (though not Artin $\C$-stacks), that is, there is an embedding $\IndSch_\C\subset\Ho(\Sta_\C)$, such that $\Hom_{\Sta_\C}(S,T)=\Hom_{\IndSch_\C}(S,T)$ for $S$ a finite type $\C$-scheme, as in \eq{ss3eq34}. Topological realizations work for $\C$-stacks of this type. The topological realization $S^\top$ of a $\C$-ind-scheme $S$, considered as a $\C$-stack, is homotopy-equivalent to $S^\ran$, so we may take $S^\top=S^\ran$.
\label{ss3def9}
\end{dfn}

\begin{dfn}
\label{ss3def10}
An {\it algebraic vector bundle\/} $E\ra X$ on $X$ is a locally free coherent sheaf $E$ on $X$ in the sense of \cite{Hart}. Then $E$ has a corresponding complex vector bundle $E^\ran\ra X^\ran$ and K-theory class $\lb E\rb=\lb E^\ran\rb\in K^0(X^\ran)$. 

We allow vector bundles $E\ra Y$ on disconnected $\C$-schemes $Y$ to have different ranks on different connected components of~$Y$.

We say that $E$ is {\it generated by global sections\/} if there exists a surjective morphism $V\ot_\C\O_X\ra E$ in $\coh(X)$, for $V$ a finite-dimensional $\C$-vector space.

If $E$ is generated by global sections, a {\it generating sequence $(s_1,s_2,\ldots)$ for\/} $E$ is a sequence $s_1,s_2,\ldots$ in $H^0(E)$ with $s_k=0$ for all $k\gg 0$, such that the morphism $(s_1,\ldots,s_N):\C^N\ot\O_X\ra E$ is surjective for~$N\gg 0$.

We write $\M^{\rm vb,gs}$ for the moduli stack of vector bundles $E\ra X$ generated by global sections, as an Artin $\C$-stack. Then $\M^{\rm vb,gs}\subset\M$ is an open substack, considering $E\ra X$ as a complex in $D^b\coh(X)$ concentrated in degree~0.
\end{dfn}

\begin{dfn}
\label{ss3def11}
For $0\le k\le N$, write $\Gr_k(\C^N)$ for the Grassmannian of vector subspaces $V\subset\C^N$ with $\dim V=k$. It is a smooth projective $\C$-scheme of dimension $k(N-k)$. We set~$\Gr(\C^N)=\coprod_{k=0}^N\Gr_k(\C^N)$.

For any $\C$-scheme or $\C$-ind-scheme $Y$ we will write $\ul\C^N=\C^N\ot\O_Y$ for the trivial vector bundle $\ul\C^N\ra Y$ with fibre $\C^N$.

Then on $\Gr(\C^N)$ there is a tautological vector subbundle $F^{\rm taut}_N\subset\ul\C^N$, whose fibre at a $\C$-point $V\in\Gr(\C^N)$ is $F^{\rm taut}_N\big\vert_V=V$. There is a quotient vector bundle $F^{\rm quot}_N:=\ul\C^N/F^{\rm taut}_N$ on~$\Gr(\C^N)$.

Define a closed embedding $i_N:\Gr(\C^N)\hookra\Gr(\C^{N+1})$ to map $V\subset\C^N$ to $V\op\C\subset\C^N\op\C=\C^{N+1}$. Write $\Gr(\C^\iy)=\varinjlim_{N\ra\iy}\Gr(\C^N)$ for the direct limit using these closed embeddings. It is not a scheme, but it is an ind-scheme. It has an underlying complex analytic topological space $\Gr(\C^\iy)^\ran=\varinjlim_{N\ra\iy}\Gr(\C^N)^\ran$, where $\Gr(\C^N)^\ran$ is the complex analytic topological space of the $\C$-scheme $\Gr(\C^N)$, which is a disjoint union of complex manifolds.

In vector bundles on $\Gr(\C^N)$ we have natural isomorphisms $i_N^*(F^{\rm taut}_{N+1})\cong F^{\rm taut}_N\op\ul\C$ and $i_N^*(F^{\rm quot}_{N+1})\cong F^{\rm quot}_N$. Thus there is a natural quotient vector bundle $F^{\rm quot}_\iy\ra \Gr(\C^\iy)$ in the sense of ind-schemes, with canonical isomorphisms $F^{\rm quot}_\iy\vert_{\Gr(\C^N)}\cong F^{\rm quot}_N$ for all $N\ge 0$. The vector bundles $F^{\rm taut}_N$ do not extend to $\Gr(\C^\iy)$ in the same way.

Regarding $\Gr(\C^\iy)$ as a $\C$-stack, for any finite type $\C$-scheme $S$ we have
\begin{equation*}
\Hom_{\Sta_\C}\bigl(S,\Gr(\C^\iy)\bigr)=\varinjlim\nolimits_{N\ra\iy}\Hom_{\Sch_\C}(S,\Gr(\C^N)).
\end{equation*}	

Define $\chi_N:\Gr(\C^N)\t\Gr(\C^N)\ra\Gr(\C^{2N})$ to map $(V_1,V_2)\mapsto V_1\op V_2$ on $\C$-points, where $V_1\op V_2\subset\C^N\op\C^N$, and we identify $\C^N\op\C^N\cong\C^{2N}$ by
\begin{equation*}
\bigl((x_1,\ldots,x_N),(y_1,\ldots,y_N)\bigr)\simeq(x_1,y_1,x_2,y_2,\ldots,x_N,y_N).
\end{equation*}
Then the following commutes:
\begin{equation*}
\xymatrix@C=170pt@R=15pt{ *+[r]{\Gr(\C^N)\t\Gr(\C^N)} \ar[d]^{i_N\t i_N} \ar[r]_{\chi_N} & *+[l]{\Gr(\C^{2N})} \ar[d]_{i_{2N+1}\ci i_{2N}} \\
*+[r]{\Gr(\C^{N+1})\t\Gr(\C^{N+1})} \ar[r]^{\chi_{N+1}} & *+[l]{\Gr(\C^{2N+2}).\!} }
\end{equation*}
Hence by properties of direct limits we have a morphism $\chi_\iy=\varinjlim_{N\ra\iy}\chi_N:\Gr(\C^\iy)\t\Gr(\C^\iy)\ra\Gr(\C^\iy)$. It is homotopy commutative and associative on $\Gr(\C^\iy)^\ran$ by Friedlander--Walker \cite[Prop.~2.8]{FrWa} for~$X=\Spec\C$.
\end{dfn}

The next lemma is an elementary consequence of Definition~\ref{ss3def11}.

\begin{lem}
\label{ss3lem3}
There is a natural\/ {\rm 1-1} correspondence between $\C$-points $z$ in $\Gr(\C^\iy)$ and isomorphism classes $\bigl[V,(v_1,v_2,\ldots)\bigr]$ of pairs $\bigl(V,(v_1,v_2,\ldots)\bigr)$ where $V\cong F^{\rm quot}_\iy\vert_z$ is a finite-dimensional\/ $\C$-vector space and\/ $v_1,v_2,\ldots\in V$ with\/ $v_k=0$ for $k\gg 0$ and\/ $V=\an{v_1,v_2,\ldots}_\C$. Under this, $\chi_\iy$ acts on $\C$-points by
\e
\begin{split}
&\chi_\iy:\bigl((V,(v_1,v_2,\ldots)),(W,(w_1,w_2,\ldots))\bigr)
\longmapsto\\
&\qquad\qquad\bigl(V\op W,(v_1\op 0, 0\op w_1,v_2\op 0,0\op w_2,v_3\op 0,\ldots)\bigr).
\end{split}
\label{ss3eq35}
\e
\end{lem}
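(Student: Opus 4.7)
The plan is to build explicit mutually inverse assignments between the two sets and then unwind the interleaving in Definition~\ref{ss3def11} to verify \eqref{ss3eq35}. No step presents a real obstacle; the main task is careful bookkeeping about the transition maps $i_N,$ and the only subtle point is that the $i_N$-compatibility rests on the specific form $W\mapsto W\op\C$ of the embeddings, which is precisely what makes $F^{\rm quot}_\iy$ (rather than $F^{\rm taut}_\iy$) well-defined as an ind-scheme vector bundle.

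First I would define the forward map. A $\C$-point $z\in\Gr(\C^\iy)$ factors through $\Gr(\C^N)\hookra\Gr(\C^\iy)$ for some $N\ge 0,$ so it is represented by a subspace $W\subset\C^N.$ Set $V:=\C^N/W=F^{\rm quot}_\iy\vert_z,$ let $\pi_N:\C^N\ra V$ be the projection, and put $v_k:=\pi_N(e_k)$ for $1\le k\le N$ and $v_k:=0$ for $k>N,$ where $(e_1,\ldots,e_N)$ is the standard basis. Then $V=\an{v_1,v_2,\ldots}_\C$ automatically. Replacing $W$ by $i_N(W)=W\op\C$ canonically identifies $\C^{N+1}/(W\op\C)$ with $V,$ and under this identification $\pi_{N+1}(e_k)=v_k$ for $k\le N$ while $\pi_{N+1}(e_{N+1})=0,$ so the isomorphism class $\bigl[V,(v_k)\bigr]$ is independent of the representative~$N.$ Conversely, given a pair $\bigl(V,(v_k)\bigr),$ choose any $N$ with $v_k=0$ for $k>N,$ define the surjection $\phi_N:\C^N\ra V,$ $e_k\mapsto v_k,$ and set $W:=\ker\phi_N\in\Gr_{N-\dim V}(\C^N)\subset\Gr(\C^N).$ Increasing $N$ to $N'$ replaces $W$ by $W\op\C^{N'-N},$ which is $i_{N'-1}\ci\cdots\ci i_N(W),$ so the resulting $\C$-point of $\Gr(\C^\iy)$ is unchanged; isomorphic pairs produce the same $W,$ since an isomorphism $\al:V\,{\buildrel\cong\over\ra}\,V'$ with $\al(v_k)=v'_k$ satisfies $\al\ci\phi_N=\phi'_N$ and hence $\ker\phi_N=\ker\phi'_N.$ The two constructions are visibly mutually inverse.

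Finally I would verify \eqref{ss3eq35}. Representing both factors by $W_1,W_2\subset\C^N$ for a common $N\gg 0,$ the morphism $\chi_N$ sends $(W_1,W_2)$ to $W_1\op W_2\subset\C^N\op\C^N\cong\C^{2N},$ the isomorphism being the interleaving of Definition~\ref{ss3def11}. Under this interleaving $e_{2j-1}^{(2N)}\leftrightarrow(e_j^{(N)},0)$ and $e_{2j}^{(2N)}\leftrightarrow(0,e_j^{(N)})$ in $\C^N\op\C^N.$ Projecting to $(\C^N\op\C^N)/(W_1\op W_2)=V\op W$ and applying the forward bijection turns the sequence $(e_1^{(2N)},e_2^{(2N)},e_3^{(2N)},e_4^{(2N)},\ldots)$ into $\bigl(v_1\op 0,\;0\op w_1,\;v_2\op 0,\;0\op w_2,\ldots\bigr),$ which is exactly \eqref{ss3eq35}.
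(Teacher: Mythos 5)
Your proof is correct; the paper gives no argument for this lemma, stating only that it is ``an elementary consequence of Definition~\ref{ss3def11},'' and your direct verification (the quotient description of $F^{\rm quot}_N\vert_W=\C^N/W$ with $v_k=\pi_N(e_k)$, the check that $i_N$ extends the sequence by zeros, the kernel construction for the inverse, and the unwinding of the interleaving for $\chi_N$) is exactly the intended elementary argument. The only cosmetic issue is the clash between $W$ as a subspace of $\C^N$ and $W$ as the second vector space in \eqref{ss3eq35}, which you should disambiguate.
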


\begin{dfn}
\label{ss3def12}	
For $X$ a smooth projective $\C$-scheme as above and $N\ge 0$, we have a mapping $\C$-scheme $\Map_{\Sch_\C}\bigl(X,\Gr(\C^N)\bigr)$, which is a projective $\C$-scheme, whose $\C$-points $\phi$ are $\C$-scheme morphisms $\phi:X\ra\Gr(\C^N)$. It has a tautological morphism
\begin{equation*}
\tau_{X,N}:X\t \Map_{\Sch_\C}\bigl(X,\Gr(\C^N)\bigr)\longra \Gr(\C^N)
\end{equation*}
with $\tau_{X,N}(x,\phi)=\phi(x)$ on $\C$-points. 

On $X\t \Map_{\Sch_\C}\bigl(X,\Gr(\C^N)\bigr)$ we have the trivial vector bundle $\ul\C^N$, with a vector subbundle $\tau_{X,N}^*(F^{\rm taut}_N)\subseteq\ul\C^N$, and quotient vector bundle
\begin{equation*}
F_{X,N}^{\rm quot}:=\ul\C^N\big/\tau_{X,N}^*(F^{\rm taut}_N)=\tau_{X,N}^*(F^{\rm quot}_N).
\end{equation*}
We regard $F_{X,N}^{\rm quot}\ra X\t \Map_{\Sch_\C}\bigl(X,\Gr(\C^N)\bigr)$ as a family of vector bundles on $X$ over the base scheme $\Map_{\Sch_\C}\bigl(X,\Gr(\C^N)\bigr)$. As we have a surjective morphism $\ul\C^N\ra F_{X,N}^{\rm quot}$, these vector bundles are generated by global sections. Thus by definition of $\M^{\rm vb,gs}$, $F_{X,N}^{\rm quot}$ is equivalent to a $\C$-stack morphism
\begin{equation*}
f_{X,N}^{\rm quot}:\Map_{\Sch_\C}\bigl(X,\Gr(\C^N)\bigr)\longra\M^{\rm vb,gs}\subset\M.
\end{equation*}

Composition with $i_N:\Gr(\C^N)\hookra\Gr(\C^{N+1})$ induces a closed embedding
$\Map_{\Sch_\C}\bigl(X,\Gr(\C^N)\bigr)\!\hookra\! \Map_{\Sch_\C}\bigl(X,\Gr(\C^{N+1})\bigr)$ mapping $\phi\mapsto i_N\ci\phi$ on $\C$-points. Taking the direct limit gives the mapping space, as a $\C$-ind-scheme
\begin{equation*}
\T:=\Map_{\IndSch_\C}\bigl(X,\Gr(\C^\iy)\bigr)=\varinjlim\nolimits_{N\ra\iy}\Map_{\Sch_\C}\bigl(X,\Gr(\C^N)\bigr).
\end{equation*}
Its complex analytic topological space $\T^\ran$ is a direct limit in the obvious way.

Taking direct limits of the morphisms $\tau_{X,N}$ as $N\ra\iy$ gives a morphism $\tau_{X,\iy}:X\t \T\longra \Gr(\C^\iy)$. We have a vector bundle $F_{X,\iy}^{\rm quot}$ on $X\t \T$ given by $F_{X,\iy}^{\rm quot}:=\tau_{X,\iy}^*(F^{\rm quot}_\iy)$, the direct limit of the vector bundles $F_{X,N}^{\rm quot}$ on $X\t \Map_{\Sch_\C}\bigl(X,\Gr(\C^N)\bigr)$. The vector bundle $F_{X,\iy}^{\rm quot}$ is equivalent to a $\C$-stack morphism
\begin{equation*}
\De:\T\longra\M^{\rm vb,gs}\subset\M,
\end{equation*}
the direct limit of the $f_{X,N}^{\rm quot}$ as $N\ra\iy$.

Define a morphism $\Xi:\T\t\T\ra\T$ in $\IndSch_\C$ to be the composition
\begin{equation*}
\xymatrix@C=40pt{
\T\t\T \ar[r]^(0.3){\begin{subarray}{l}\text{direct product} \\ \;\>\text{of maps}\end{subarray}} & \Map_{\IndSch_\C}\bigl(X,\Gr(\C^\iy)\t\Gr(\C^\iy)\bigr)
\ar[r]^(0.75){\chi_\iy\ci} & \T. }
\end{equation*}
Then since $\chi_\iy$ acts as direct sum on the underlying vector spaces as in \eq{ss3eq35}, the following commutes in $\Ho(\Sta_\C)$:
\e
\begin{gathered}
\xymatrix@C=150pt@R=15pt{ *+[r]{\T\t\T} \ar[r]_\Xi \ar[d]^{\De\t\De} & *+[l]{\T} \ar[d]_\De \\ 
*+[r]{\M\t\M} \ar[r]^\Phi  & *+[l]{\M.\!} }	
\end{gathered}
\label{ss3eq36}
\e
\end{dfn}

The next lemmas are elementary consequences of Definition~\ref{ss3def12}.

\begin{lem}
\label{ss3lem4}
There is a natural\/ {\rm 1-1} correspondence between $\C$-points $t$ in $\T=\Map_{\IndSch_\C}\bigl(X,\Gr(\C^\iy)\bigr)$ and isomorphism classes $\bigl[F,(s_1,s_2,\ldots)\bigr]$ of pairs $\bigl(F,(s_1,s_2,\ldots)\bigr)$ where $F\ra X$ is a vector bundle on $X$ generated by global sections, with $F\cong t^*(F_\iy^{\rm quot}),$ and\/ $(s_1,s_2,\ldots)$ is a generating series for\/ $F$.
Under this, $\Xi$ acts on $\C$-points by
\begin{align*}
&\Xi:\bigl((F,(s_1,s_2,\ldots)),(F',(s'_1,s'_2,\ldots))\bigr)
\longmapsto\\
&\qquad\qquad\bigl(F\op F',(s_1\op 0, 0\op s'_1,s_2\op 0,0\op s'_2,s_3\op 0,\ldots)\bigr).
\end{align*}
\end{lem}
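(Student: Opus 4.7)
The plan is to combine the universal property of the Grassmannian, applied at each finite stage $\Gr(\C^N)$, with Lemma~\ref{ss3lem3}. First I would start with a $\C$-point $t \in \T$. Since $X$ is of finite type, \eq{ss3eq34} shows that $t$ factors through $\Map_{\Sch_\C}(X, \Gr(\C^N))$ for some $N \gg 0$, so corresponds to a morphism $t_N: X \to \Gr(\C^N)$. The universal property of the Grassmannian identifies such $t_N$ with isomorphism classes of surjections $q: \ul\C^N \twoheadrightarrow F$ of coherent sheaves on $X$ with $F$ locally free and $F \cong t_N^*(F^{\rm quot}_N)$. Specifying such a $q$ is equivalent to specifying $N$ global sections $s_i := q(e_i) \in H^0(F)$ whose evaluation map is surjective, and padding by zeros gives a generating sequence $(s_1, s_2, \ldots)$ for $F$ in the sense of Definition~\ref{ss3def10}.

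Second, I would check the construction descends to $\T$. Under $i_N: \Gr(\C^N) \hookra \Gr(\C^{N+1})$, $V \mapsto V \op \C$, the pullback of $F^{\rm quot}_{N+1}$ is canonically $F^{\rm quot}_N$, and the final $\C$-summand of $\ul\C^{N+1}$ lies in the tautological sub-bundle $F^{\rm taut}_{N+1}$ along $i_N$, so the extra section maps to $0 \in F$. Hence the sequence attached to $i_N \ci t_N$ is that of $t_N$ with one more zero appended, and the isomorphism class $[F,(s_1,s_2,\ldots)]$ is independent of the stage $N$. Conversely, given $[F,(s_1,s_2,\ldots)]$ with $s_k = 0$ for $k > N$ and $(s_1,\ldots,s_N)$ generating $F$, the evaluation map $\ul\C^N \twoheadrightarrow F$ defines a morphism $X \to \Gr(\C^N) \hookra \Gr(\C^\iy)$, i.e., a $\C$-point of $\T$. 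By the universal property of the Grassmannian, the two constructions are mutually inverse.

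Finally, for the formula for $\Xi$, I would simply unwind its definition: $\Xi$ is the composite of the identification $\T \t \T \cong \Map_{\IndSch_\C}(X, \Gr(\C^\iy) \t \Gr(\C^\iy))$ with post-composition by $\chi_\iy$. On $\C$-points, if $t, t'$ correspond to $(F,(s_i))$ and $(F',(s'_i))$ under the bijection above, then $\Xi(t,t')$ is classified by $\chi_\iy \ci (t,t'): X \to \Gr(\C^\iy)$, whose value at $x \in X$ is $\chi_\iy(t(x), t'(x))$; Lemma~\ref{ss3lem3} evaluates this pointwise to $\bigl(F\vert_x \op F'\vert_x,(s_1(x)\op 0, 0\op s'_1(x), s_2(x)\op 0, \ldots)\bigr)$, and globalising over $X$ yields the claimed shuffle formula. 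No serious obstacle arises; the lemma is a routine unpacking of Lemma~\ref{ss3lem3} together with the universal property of the Grassmannian, the only mildly delicate point being the direct limit bookkeeping needed to see that enlarging $N$ only appends zeros to the generating sequence.
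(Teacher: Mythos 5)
Your proposal is correct, and it fills in exactly the argument the paper leaves implicit: the paper states this lemma (together with Lemma \ref{ss3lem5}) without proof, calling it an "elementary consequence of Definition \ref{ss3def12}," and your route — factoring a $\C$-point through a finite stage via \eq{ss3eq34}, invoking the universal property of $\Gr(\C^N)$ to convert $t_N$ into a surjection $\ul\C^N\twoheadrightarrow F$ and hence a generating sequence, checking that $i_N$ only appends a zero section, and reading off the $\Xi$ formula pointwise from Lemma \ref{ss3lem3} — is the intended one. No gaps.
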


\begin{lem}
\label{ss3lem5}
In Definition\/ {\rm\ref{ss3def12},} $\T^\ran$ is an H-space, with multiplication\/ $\Xi^\ran,$ and identity $(0,(0,0,\ldots))$ under the {\rm 1-1} correspondence in Lemma\/ {\rm\ref{ss3lem4}}.	Applying $(-)^\top$ to \eq{ss3eq36} shows that\/ $\De^\top:\T^\ran=\T^\top\ra\M^\top$ is an H-space morphism.
\end{lem}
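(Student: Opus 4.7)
The plan is to transport the H-space structure on $\Gr(\C^\iy)^\ran$ recorded at the end of Definition~\ref{ss3def11} through the mapping-space construction. The multiplication $\Xi$ on $\T=\Map_{\IndSch_\C}(X,\Gr(\C^\iy))$ was defined in Definition~\ref{ss3def12} as postcomposition with $\chi_\iy$, so on complex-analytic spaces $\Xi^\ran$ is likewise postcomposition with $\chi_\iy^\ran$, acting pointwise on continuous maps $X^\ran\ra\Gr(\C^\iy)^\ran$ that factor through some~$\Gr(\C^N)^\ran$.

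First I would record the general principle that pointwise-defined homotopies on the target lift to pointwise homotopies on a mapping space: given a homotopy $H_t:\Gr(\C^\iy)^\ran\t\Gr(\C^\iy)^\ran\ra\Gr(\C^\iy)^\ran$ between two continuous maps, the family $(f_1,f_2)\mapsto H_t\ci(f_1,f_2)$ is continuous in $t\in[0,1]$ and provides a homotopy on $\T^\ran\t\T^\ran$. Applying this to the commutativity and associativity homotopies for $\chi_\iy^\ran$ from Friedlander--Walker \cite[Prop.~2.8]{FrWa} yields commutativity and associativity homotopies for~$\Xi^\ran$.

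For the identity, under Lemma~\ref{ss3lem4} the $\C$-point $\bigl(0,(0,0,\ldots)\bigr)$ of $\T$ corresponds to the constant map $X\ra\Gr_0(\C^\iy)\subset\Gr(\C^\iy)$ with value the zero subspace. This zero subspace is the identity for $\chi_\iy$ in $\Gr(\C^\iy)^\ran$, up to the homotopies coming from the direct-limit identifications $\C^N\op 0\cong\C^N$ built into the ind-scheme structure of $\Gr(\C^\iy)$. The same pointwise-lifting argument then shows $\bigl(0,(0,0,\ldots)\bigr)$ is a homotopy identity for $\Xi^\ran$, so $(\T^\ran,\Xi^\ran)$ is a commutative, associative H-space.

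For the final assertion, I would apply $(-)^\top$ to \eq{ss3eq36}. Using that $(-)^\top$ preserves finite products up to homotopy, and that $\T^\top=\T^\ran$ and $\M^\top$ carry the H-space structures $\Xi^\ran$ and $\Phi^\top$ respectively, the resulting homotopy-commutative diagram in $\Topho$ is exactly the compatibility $\De^\top\ci\Xi^\ran\simeq\Phi^\top\ci(\De^\top\t\De^\top)$. Together with $\De^\top\bigl(0,(0,0,\ldots)\bigr)=[0]$, which is immediate from the definition of $\De$ (the zero vector bundle on $X$ corresponds to the identity object in $\M$), this shows $\De^\top$ is an H-space morphism. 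The argument is essentially formal, and I do not anticipate a substantive obstacle; the only mild technicality is the pointwise lifting of homotopies in the first step.
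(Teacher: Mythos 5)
Your argument is essentially the paper's: the lemma is stated there as an "elementary consequence" of Definition \ref{ss3def12}, the intended content being exactly what you write — transport the homotopy commutativity/associativity of $\chi_\iy$ on $\Gr(\C^\iy)^\ran$ (Friedlander--Walker) through postcomposition, identify the identity via Lemma \ref{ss3lem4}, and apply $(-)^\top$ to \eq{ss3eq36} for the last claim.

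One caveat on your "general principle": as literally stated it is false, because $\T^\ran$ is the analytification of the \emph{algebraic} mapping ind-scheme, not the full continuous mapping space $\Map_{C^0}(X^\ran,\Gr(\C^\iy)^\ran)$. Postcomposing an algebraic morphism $\phi:X\ra\Gr(\C^N)$ with an arbitrary continuous homotopy $H_t$ on the target need not produce a point of $\T^\ran$ at intermediate times $t$. The argument works because the homotopies actually required (between $\chi_N$ and $\chi_N\ci\si$, between the two bracketings, and between $\chi_N(-,0)$ and the inclusion $\Gr(\C^N)\hookra\Gr(\C^{2N})$) are realized by paths in $\GL(2N,\C)$ acting algebraically on $\Gr(\C^{2N})$, so postcomposition gives continuous families of points of $\Map_{\Sch_\C}(X,\Gr(\C^{2N}))^\ran\subset\T^\ran$. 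You should say this explicitly rather than invoking lifting of arbitrary continuous homotopies.
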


\begin{rem} Our $\T^\ran$ coincides with $\mathscr{M}\kern -.15em or(X,{\rm Grass})^\ran$ in Friedlander--Walker \cite[Def.~2.5]{FrWa}. Friedlander and Walker \cite[Prop.~2.8]{FrWa} give $\T^\ran$ the structure of an $E_\iy$-space \cite{May2}, an enhancement of an H-space.
\label{ss3rem2}	
\end{rem}

\begin{prop} $\De^\top:\T^\ran\ra\M^\top$ is a homotopy-theoretic group completion, in the sense of Definition\/~{\rm\ref{ss3def2}}.\label{ss3prop3}	
\end{prop}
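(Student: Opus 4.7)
The plan is to reduce this to a theorem of Friedlander and Walker \cite{FrWa}, which can be viewed as an identification of semi-topological K-theory with topological K-theory for smooth projective $\C$-schemes.

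First I would show that $\Ga:\M^\top\ra\cC$ from Theorem \ref{ss1thm2}(a) is itself a homotopy equivalence of H-spaces. Since $\M=\Map_{\HSta_\C}(X,\Perf_\C)$ and $\cC=\Map_{C^0}(X^\ran,B\U\t\Z)$, this reduces to compatibility of topological realization with mapping stacks out of the smooth projective $\C$-scheme $X$, combined with Blanc's theorem $\Perf_\C^\top\simeq B\U\t\Z$ \cite[Th.~4.21]{Blan}. As $\cC$ is grouplike (Definition \ref{ss3def5}), to prove $\De^\top$ is a homotopy-theoretic group completion it then suffices to prove that the composition $\Ga\ci\De^\top:\T^\ran\ra\cC$ is one.

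Second, I would identify this composition. Unwinding Definition \ref{ss3def12} and Lemma \ref{ss3lem4}, a $\C$-point $(F,(s_1,s_2,\ldots))\in\T$ corresponds to a classifying map $X\ra\Gr(\C^\iy)$, and passing to underlying complex-analytic topological spaces together with the classical homotopy equivalence $\Gr(\C^\iy)^\ran\simeq B\U\t\Z$ identifies $\Ga\ci\De^\top$ with the canonical comparison map
\begin{equation*}
\Map_{\Sch_\C}(X,\Gr(\C^\iy))^\ran\longra \Map_{C^0}(X^\ran,B\U\t\Z)=\cC,
\end{equation*}
which is an H-space morphism with respect to the multiplications $\Xi^\ran$ (induced by $\chi_\iy$) and $\Psi$ (induced by $\mu'$).

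Third, I would appeal to Friedlander and Walker \cite{FrWa}, who show this comparison map is a homotopy-theoretic group completion. They endow $\T^\ran$ with an $E_\iy$-space structure (Remark \ref{ss3rem2}), so by May \cite[Lem.~2.1]{May1} it has a group completion, and their main comparison theorem then identifies this group completion with $\cC$. Concretely, on $\pi_0$ the monoid of isomorphism classes of algebraic vector bundles on $X$ generated by global sections, under $\op$, group-completes to $\pi_0(\cC)=K^0(X^\ran)$; surjectivity uses Serre's theorem that any coherent sheaf becomes globally generated after twisting by a sufficiently ample line bundle, together with finite resolutions by vector bundles. On singular homology with arbitrary coefficient ring $R$ the induced map realizes the localization of the Pontryagin ring $H_*(\T^\ran,R)$ at $\pi_0(\T^\ran)$, matching Definition \ref{ss3def2}(ii).

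The main obstacle is the first step, namely verifying that $\Ga$ is a homotopy equivalence, since this requires using Blanc's topological realization machinery to establish the nontrivial compatibility $(\Map_{\HSta_\C}(X,\Perf_\C))^\top\simeq\Map_{C^0}(X^\ran,\Perf_\C^\top)$ for smooth projective $X$. An alternative that avoids this is to prove the statement directly on $\M^\top$: check $\pi_0(\De^\top)$ is a group completion via finite resolutions by globally generated vector bundles, and then run the Friedlander--Walker homological argument intrinsically on $\M^\top$, comparing Pontryagin rings via the H-map $\Ga$.
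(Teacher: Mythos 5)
Your first step contains a genuine error that the rest of the argument inherits. The map $\Ga:\M^\top\ra\cC$ is \emph{not} a homotopy equivalence, and the "compatibility" you hope to extract from Blanc, namely $\bigl(\Map_{\HSta_\C}(X,\Perf_\C)\bigr)^\top\simeq\Map_{C^0}(X^\ran,\Perf_\C^\top)$, is false in general: topological realization does not commute with mapping stacks out of a projective scheme. Indeed $\M^\top$ is (by Blanc \cite[Prop.~4.17]{Blan} together with Antieau--Heller \cite[Th.~2.3]{AnHe}) the \emph{semi-topological} K-theory space $\Om^\iy{\mathscr K}^{\rm st}(X)$ of Friedlander--Walker, whereas $\cC$ is the \emph{topological} K-theory space of $X^\ran$; these differ already on $\pi_0$, where the group completion of $\pi_0(\T^\ran)$ is $K_0$ of $X$ modulo algebraic equivalence of bundles (components of $\T^\ran$ identify algebraically connected families, not topologically isomorphic ones), which is in general neither injectively nor surjectively mapped onto $K^0(X^\ran)$. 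For the same reason your step 4 misquotes Friedlander--Walker: what they prove is that $\T^\ran\ra\Om^\iy{\mathscr K}^{\rm st}(X)$ is a homotopy-theoretic group completion, not that the comparison map $\T^\ran\ra\cC$ is one. If $\Ga$ were an equivalence, Theorem \ref{ss1thm2}(b),(c) would be nearly trivial; the whole point of Proposition \ref{ss3prop3} is to transfer $\Z_2$-bundle data up the genuine group completion $\De^\top$ via Proposition \ref{ss3prop1}, with $\Ga$ only an H-map.

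The correct route, which is the one the paper takes, keeps the target as $\M^\top$ throughout: (i) Friedlander--Walker define $\Om^\iy{\mathscr K}^{\rm st}(X)$ \emph{as} the homotopy-theoretic group completion of the $E_\iy$-space $\T^\ran$; (ii) Blanc identifies the topological realization of the moduli stack of objects of $\Perf(X)$ with his connective semi-topological K-theory space $\Om^\iy{\mathscr K}^{\rm cn,st}(\Perf(X))$; (iii) Antieau--Heller supply the H-space equivalence $\Om^\iy{\mathscr K}^{\rm st}(X)\ra\Om^\iy{\mathscr K}^{\rm cn,st}(\Perf(X))$, proving Blanc's conjecture; and (iv) one checks the composite $\T^\ran\ra\Om^\iy{\mathscr K}^{\rm st}(X)\ra\M^\top$ is homotopic to $\De^\top$. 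Step (iii) is a substantive theorem and cannot be replaced by the elementary "comparison of Pontryagin rings via $\Ga$" sketched in your fallback, since computing $H_*(\M^\top,R)$ and verifying Definition \ref{ss3def2}(ii) is exactly what that theorem accomplishes.
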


\begin{proof} Friedlander and Walker \cite[\S 2, Th.~3.4]{FrWa} define the {\it semi-topological K-theory space\/} $\Om^\iy{\mathscr K}^{\rm st}(X)$ of $X$ to be the homotopy-theoretic group completion of $\T^\ran$ (their
$\mathscr{M}\kern -.15em or(X,{\rm Grass})^\ran$), which exists as $E_\iy$-spaces have homotopy-theoretic group completions. Thus, there is an H-space morphism $\T^\ran\ra\Om^\iy{\mathscr K}^{\rm st}(X)$ which is a homotopy-theoretic group completion.

Blanc \cite{Blan} defines {\it connective semi-topological K-theory spaces\/} $\Om^\iy{\mathscr K}^{\rm cn,st}(\cD)$ for $\C$-dg-categories $\cD$. In \cite[Prop.~4.17]{Blan} he shows $\Om^\iy{\mathscr K}^{\rm cn,st}(\cD)$ is homotopy equivalent to the topological realization of the moduli stack of objects in $\cD$. This gives an H-space homotopy equivalence~$\Om^\iy {\mathscr K}^{\rm cn,st}(\Perf(X))\ra\M^\top$. Antieau--Heller \cite[Th.~2.3]{AnHe} construct an H-space homotopy equivalence $\Om^\iy{\mathscr K}^{\rm st}(X)\ra\Om^\iy {\mathscr K}^{\rm cn,st}(\Perf(X))$, proving a conjecture of Blanc. 

Thus, the composition $\T^\ran\ra\Om^\iy{\mathscr K}^{\rm st}(X)\ra\Om^\iy{\mathscr K}^{\rm cn,st}(\Perf(X))\ra\M^\top$ is a homotopy-theoretic group completion. From the definitions we can show this composition is homotopic to $\De^\top$. The proposition follows.
\end{proof}

\begin{dfn}
\label{ss3def13}
Considering the complex analytic topological space $\T^\ran$ as a topological stack, we will define a morphism in $\Ho(\TopSta)$:
\e
\La:\T^\ran\longra \coprod\nolimits_{\begin{subarray}{l}\text{iso. classes $[P]$ of  principal}\\ \text{$\U(n)$-bundles $P\ra X$, $n\ge 0$}\end{subarray}}\B_P.
\label{ss3eq37}
\e
Here as in \eq{ss3eq6}, we sum over isomorphism classes $[P]$ of principal $\U(n)$-bundles $P\ra X$, and we pick one representative $P$ in each isomorphism class $[P]$ to give the corresponding $\B_P^\cla$. If $P'\in[P]$ is an alternative choice then $\B_P,\B_{P'}$ are canonically isomorphic.

We also write $\T^\ran_{[P]}=\La^{-1}(\B_P)$, so that $\T^\ran=\coprod_{\text{iso. classes $[P]$}}\T^\ran_{[P]}$, and $\La_{[P]}=\La\vert_{\T^\ran_{[P]}}$, so that $\La_{[P]}:\T^\ran_{[P]}\ra\B_P$.

We first define $\La$ at the level of points. By Lemma \ref{ss3lem4} we may write points of $\T^\ran$, which are $\C$-points of $\T$, as $[F,(s_1,s_2,\ldots)]$, where $F\ra X$ is a rank $n$ algebraic vector bundle generated by global sections, and $(s_1,s_2,\ldots)$ is a generating series for $F$. Write $F^\ran\ra X^\ran$ for the corresponding smooth complex vector bundle. Choose $N\gg 0$ such that $s_i=0$ for $i>N$, so that $(s_1,\ldots,s_N):\ul\C^N \ra F$ is surjective.

Using the Hermitian metric $h_{\C^N}$ on $\C^N$, in vector bundles on $X^\ran$ we split
\e
\ul\C^N=F^\ran\op\Ker(s_1,\ldots,s_N)^\ran 
\label{ss3eq38}
\e
identifying $F^\ran$ with the orthogonal complement $\bigl(\Ker(s_1,\ldots,s_N)^\ran\bigr){}^\perp$. Then $h_{\C^N}$ restricts to a Hermitian metric $h_{F^\ran}$ on $F^\ran$, giving $F^\ran$ a $\U(n)$-structure. The trivial connection $\nabla_0$ on $\ul\C^N$ and the orthogonal splitting \eq{ss3eq38} induces a $\U(n)$-connection $\nabla_{F^\ran}$ on $F^\ran$. Here $h_{F^\ran},\nabla_{F^\ran}$ are independent of the choice of $N\gg 0$ with $s_i=0$ for $i>N$, as increasing $N$ to $N+M$ replaces \eq{ss3eq38} by
\begin{equation*}
\ul\C^N\op\ul\C^M=F^\ran\op\Ker(s_1,\ldots,s_{N+M})^\ran =F^\ran\op\Ker(s_1,\ldots,s_N)^\ran\op\ul\C^M,
\end{equation*}
and the additional $\ul\C^M$ factors do not change $h_{F^\ran},\nabla_{F^\ran}$. Let $P\ra X^\ran$ be the principal $\U(n)$-bundle associated to $F^\ran,h_{F^\ran}$, and $\nabla_P$ the connection on $P$ corresponding to $\nabla_{F^\ran}$. Then~$[\nabla_P]\in\B_P$.

On the right hand side of \eq{ss3eq37} the isomorphism class $[P]$ is represented by some $P'\ra X^\ran$, say. Then there is a non-canonical isomorphism $P\ra P'$ (i.e.\ the isomorphism is canonical up to an element of $\G_{P'}=\Aut(P')$) which induces a canonical isomorphism $\B_P\ra \B_{P'}$ (as the definition of $\B_{P'}$ divides out by $\G_{P'}$). Then $\La$ should map $[F,(s_1,s_2,\ldots)]$ to the image of $[\nabla_P]\in\B_P$ under this isomorphism $\B_P\ra \B_{P'}$. This definition of $\La([F,(s_1,s_2,\ldots)])$ may be extended from points to continuous families of points over a topological space, and hence induces a morphism $\La$ of topological stacks in~\eq{ss3eq37}.

By considering the action on points $\bigl([F,(s_1,s_2,\ldots)],[F',(s'_1,s'_2,\ldots)]\bigr)$ of $\T^\ran$, which are sent to the point corresponding to $(F^\ran\op F^{\prime\ran},\nabla_{F^\ran}\op\nabla_{F^{\prime\ran}})$ around both routes in \eq{ss3eq39}, we see the following commutes in~$\Ho(\TopSta)$:
\e
\begin{gathered}
\xymatrix@C=170pt@R=15pt{ *+[r]{\T^\ran\t\T^\ran} \ar[r]_(0.4){\La\t\La} \ar[d]^{\Xi^\ran} & *+[l]{\coprod_{\text{iso. classes $[P],[Q]$}}\B_P\t\B_Q}\ar[d]_{\coprod_{[P],[Q]}\Phi_{P,Q}} \\
*+[r]{\T^\ran} \ar[r]^(0.4)\La & *+[l]{\coprod_\text{iso. classes $[R]$}\B_R.\!}  }
\end{gathered}
\label{ss3eq39}
\e

Choose a morphism $\La^\cla$ to make the following commute in $\Ho(\TopSta)$:
\e
\begin{gathered}
\xymatrix@C=180pt@R=15pt{ *+[r]{\T^\ran} \ar@/_.5pc/[dr]_\La  \ar[r]_{\La^\cla} & *+[l]{\coprod\limits_{\text{iso. classes $[P]$}}\B_P^\cla} \ar[d]^{\coprod_{[P]}\pi^\cla} 
\\ 
& *+[l]{\coprod\limits_{\text{iso. classes $[P]$}}\B_P.} }	
\end{gathered}
\label{ss3eq40}
\e
This is possible as $\coprod_{[P]}\pi^\cla$ is a fibration with contractible fibre $E\G_P$ over $\B_P$, and $\La^\cla$ is a section over $\La$. We can choose $\La^\cla$ to be the outcome of applying the functor $(-)^\cla$ to \eq{ss3eq37}, taking $(\T^\ran)^\cla=\T^\ran$ as it is a topological space. As in Lemma \ref{ss3lem5} and \eq{ss3eq6}, both sides of the top line of \eq{ss3eq40} are H-spaces, and applying $(-)^\cla$ to \eq{ss3eq39} shows that $\La^\cla$ is an H-space morphism. As for $\La_{[P]}$ above we write~$\La_{[P]}^\cla=\La^\cla\vert_{\T^\ran_{[P]}}:\T^\ran_{[P]}\ra\B_P^\cla$.
\end{dfn}

The following diagram will be important in the rest of the proof:
\e
\begin{gathered}
\xymatrix@C=200pt@R=15pt{ *+[r]{\T^\ran} \ar@<-1ex>@{}[dr]^\simeq \ar[r]_{\La^\cla} \ar[d]^{\De^\top} & *+[l]{\coprod\limits_{\text{iso. classes $[P]$}}\B_P^\cla} \ar[d]_{\coprod_{[P]}\Si_P^\cC}  \\ 
*+[r]{\M^\top} \ar[r]^\Ga  & *+[l]{\cC.\!} }	
\end{gathered}
\label{ss3eq41}
\e
We have proved above that \eq{ss3eq41} is a diagram of H-spaces and H-space morphisms, and the columns are homotopy-theoretic group completions. 

We claim that \eq{ss3eq41} homotopy commutes. To prove this, note that $\De:\T\ra\M$ factors via $\coprod_{n\ge 0}\M^{\rm vb}_n\hookra\M$, for $\M^{\rm vb}_n$ the moduli stack of rank $n$ vector bundles, where $\M^{\rm vb}_n=\Map_{\HSta_\C}(X,[*/\GL(n,\C)])$. Thus we have a commutative diagram in $\Ho(\HSta_\C)$:
\e
\begin{gathered}
\xymatrix@C=80pt@R=15pt{ *+[r]{X\t\T} \ar[r]_{\id_X\t v} \ar@<1ex>@/^.5pc/[rr]^{\id_X\t\De} \ar[d]^{\tau_{X,\iy}} & X\t\coprod_{n\ge 0}\M^{\rm vb}_n \ar[d]_w \ar[r] & *+[l]{X\t\M} \ar[d]_u \\
*+[r]{\Gr(\C^\iy)} \ar[r]^x & \coprod_{n\ge 0}[*/\GL(n,\C)] \ar[r] & *+[l]{\Perf_\C,} }
\end{gathered}
\label{ss3eq42}
\e
with columns the universal morphisms from the mapping stacks. Applying $(-)^\top$ to \eq{ss3eq42}, and taking $X^\top=X^\ran$, $[*/\GL(n,\C)]^\top=B\GL(n,\C)=B\U(n)$ as $\U(n)\hookra\GL(n,\C)$ is a homotopy equivalence, and $\Perf_\C^\top=B\U\t\Z$, gives a homotopy-commutative diagram
\e
\begin{gathered}
\xymatrix@C=65pt@R=15pt{ *+[r]{X^\ran\t\T^\ran} \ar@{}@<-1ex>[dr]^\simeq \ar[r]_{\id_{X^\ran}\t v^\top} \ar@<1ex>@/^.5pc/[rr]^{\id_{X^\ran}\t\De^\top} \ar[d]^{\tau_{X,\iy}} & X^\ran\t\coprod_{n\ge 0}(\M^{\rm vb}_n)^\top \ar@{}@<-1ex>[dr]^\simeq  \ar[d]_(0.42){w^\top} \ar[r] & *+[l]{X^\ran\t\M^\top} \ar[d]_(0.45){u^\top} \\
*+[r]{\Gr(\C^\iy)^\ran} \ar[r]^{x^\top} & {\begin{subarray}{l}\ts\coprod_{n\ge 0}B\GL(n,\C) \\ \ts =\coprod_{n\ge 0}B\U(n)\end{subarray}} \ar[r]^{\coprod_{n\ge 0}\Pi_n} & *+[l]{\begin{subarray}{l}\ts \Perf^\top_\C= \\ \ts B\U\t\Z.\end{subarray}\;\>} }\!\!\!\!
\end{gathered}
\label{ss3eq43}
\e

Converting \eq{ss3eq43} into a diagram of mapping spaces $\Map_{C^0}(X^\ran,-)$ yields
\e
\begin{gathered}
\xymatrix@C=49pt@R=15pt{ *+[r]{\T^\ran} \ar@{}@<-1ex>[dr]^\simeq \ar[r]_{v^\top} \ar@<1ex>@/^.5pc/[rr]^{\De^\top} \ar[d]^(0.45){(\tau_{X,\iy})_*} & \coprod_{n\ge 0}(\M^{\rm vb}_n)^\top \ar@{}@<-1ex>[dr]^\simeq  \ar[d]_(0.43){w^\top_*} \ar[r] & *+[l]{\M^\top} \ar[d]_(0.45){\Ga} \\
*+[r]{\begin{subarray}{l}\ts \Map_{C^0}(X^\ran, \\ \ts \Gr(\C^\iy)^\ran)\end{subarray}} \ar[r]^(0.4){x^\top_*} & {\begin{subarray}{l}\ts\coprod_{n\ge 0}\Map_{C^0}(X^\ran,B\GL(n,\C)) \\ \ts =\coprod_{n\ge 0}\Map_{C^0}(X^\ran,B\U(n))\end{subarray}} \ar[r]^(0.75){\coprod_{n\ge 0}\Pi_n\ci} & *+[l]{\cC.}}\!\!\!\!
\end{gathered}
\label{ss3eq44}
\e
Now consider the diagram
\begin{gather}
\nonumber\\[-26pt]
\begin{gathered}
\xymatrix@C=67pt@R=15pt{ *+[r]{\T^\ran} \ar[dr]_(0.5){w_*^\top\ci v^\top} \ar[drr]^y \ar[rr]_(0.65){\La^\cla} \ar[d]^(0.55){(\tau_{X,\iy})_*} && *+[l]{\raisebox{-18pt}{$\coprod\limits_{\text{iso. classes $[P]$}}\B_P^\cla\quad$}} \ar[d]_(0.55){\coprod_{[P]}\Si_P} \\
*+[r]{\begin{subarray}{l}\ts \Map_{C^0}(X^\ran, \\ \ts \;\>\Gr(\C^\iy)^\ran)\end{subarray}} \ar[r]^{x^\top_*} & {\coprod\limits_{n\ge 0}\begin{aligned}[t]\ts \Map_{C^0}(X^\ran, \\[-3pt] \ts B\GL(n,\C))\end{aligned}} & *+[l]{\vphantom{(}\smash{\coprod\limits_{n\ge 0}\begin{aligned}[t]\ts \Map_{C^0}(X^\ran, \\[-3pt] B\U(n)). \end{aligned}}} \ar[l]_(0.6)=  }\!\!\!\!\!\!\!\!\end{gathered}
\label{ss3eq45}
\end{gather}
Here each map $\T^\ran\ra\cdots$ is induced by a structure on $X^\ran\t\T^\ran$:
\begin{itemize}
\setlength{\itemsep}{0pt}
\setlength{\parsep}{0pt}
\item[(i)] $(\tau_{X,\iy})_*$ corresponds to $\bigl((F_{X,\iy}^{\rm quot})^\ran,s_1^\ran,s_2^\ran,\ldots\bigr)$, for $(F_{X,\iy}^{\rm quot})^\ran\ra X^\ran\t\T^\ran$ the complex vector bundle analytifying $F_{X,\iy}^{\rm quot}\ra X\t\T$, and $s_1^\ran,\ldots$ the family of generating sections. 
\item[(ii)] $\La^\cla$ corresponds to $\bigl((F_{X,\iy}^{\rm quot})^\ran,\ab h_{F_{X,\iy}^{\rm quot}},\nabla_{F_{X,\iy}^{\rm quot}}\bigr)$, for $h_{F_{X,\iy}^{\rm quot}}$ the Hermitian metric and $\nabla_{F_{X,\iy}^{\rm quot}}$ the connection on $(F_{X,\iy}^{\rm quot})^\ran$ defined using $s_1^\ran,\ldots$ as in Definition~\ref{ss3def13}.
\item[(iii)] $y$ corresponds to the Hermitian vector bundle~$\bigl((F_{X,\iy}^{\rm quot})^\ran,h_{F_{X,\iy}^{\rm quot}}\bigr)$.
\item[(iv)] $w_*^\top\ci v^\top$ corresponds to the complex vector bundle~$(F_{X,\iy}^{\rm quot})^\ran$.
\end{itemize}
The other maps $x^\top_*,\coprod_{[P]}\Si_P,=$ in \eq{ss3eq45} correspond to forgetting parts of these structures in the obvious way. Therefore \eq{ss3eq45} homotopy commutes. Comparing \eq{ss3eq44} and \eq{ss3eq45} now shows \eq{ss3eq41} homotopy commutes.

\subsection{Proof of parts (b),(c)}
\label{ss35}

We first relate the positive Dirac operator $\slashed{D}_+$ on a Calabi--Yau $4m$-fold with an operator $D_\C=\db+\db^*$ in \eq{ss3eq49} coming from the complex geometry of $(X^\ran,J)$.

\begin{dfn}
\label{ss3def14}	
Let $(X^\ran,J,g,\th)$ be a differential-geometric Calabi--Yau $4m$-fold. Then $X^\ran$ has an $\SU(4m)$-structure, and as $\SU(4m)$ is simply-connected $(X^\ran,g)$ has a natural spin structure, with real spin bundle $S\ra X^\ran$ and Dirac operator $\slashed{D}:\Ga^\iy(S)\ra\Ga^\iy(S)$, which is self-adjoint. The volume form $\vol_g$ of $g$ acts on $S$ with $\vol_g^2=\id$, so we may split $S=S_+\op S_-$ for $S_\pm$ the $\pm 1$-eigenspaces of $\vol_g$. Also $\slashed{D}\ci\vol_g=-\vol_g\ci \slashed{D}$, so $\slashed{D}$ maps $\Ga^\iy(S_\pm)\ra\Ga^\iy(S_\mp)$. The {\it positive Dirac operator\/} is~$\slashed{D}_+=\slashed{D}\vert_{S^+}:\Ga^\iy(S_+)\ra\Ga^\iy(S_-)$.

For each $q=0,\ldots,4m$ there is a complex antilinear, isometric vector bundle isomorphism $\star_q$, using the same notation for its action on sections:
\begin{align*}
\star_q&:\La^{0,q}T^*X^\ran\longra \La^{0,4m-q}T^*X^\ran,\\
\star_q&:\Ga^\iy(\La^{0,q}T^*X^\ran)\longra \Ga^\iy(\La^{0,4m-q}T^*X^\ran).
\end{align*}
It is a Calabi--Yau version of the Hodge star, characterized by
\begin{equation*}
\alpha\w\star_q\be=\an{\al,\be}\bar\th \qquad\text{for all $\al,\be\in\Ga^\iy(\La^{0,q}T^*X^\ran),$}
\end{equation*}
where $\an{\al,\be}:X^\ran\ra\C$ is the pointwise Hermitian product defined using $g$ which is $\C$-linear in $\al$ and\/ $\C$-antilinear in $\be,$ and $\bar\th\in\Ga^\iy(\La^{0,4m}T^*X^\ran)$ is the complex conjugate of $\th$. It is easy to check that
\ea
\star_{4m-q}\ci\star_q=(-1)^q\id& \qquad\qquad \text{on $\Ga^\iy(\La^{0,q}T^*X^\ran)$,}
\label{ss3eq46}\\
\db^*\ci\star_q\!=\!(-1)^{q+1}\star_{q+1}{}\ci\db&:\Ga^\iy(\La^{0,q}T^*X^\ran)\ra \Ga^\iy(\La^{0,4m-q-1}T^*X^\ran),
\label{ss3eq47}\\
\db\ci\star_q\!=\!(-1)^q\star_{q-1}{}\ci\db^*&:\Ga^\iy(\La^{0,q}T^*X^\ran)\ra \Ga^\iy(\La^{0,4m-q+1}T^*X^\ran).
\label{ss3eq48}
\ea

Define a complex elliptic operator $D_\C$ on the compact manifold~$X^\ran$:
\e
D_\C=\db+\db^*:\Ga^\iy\raisebox{-4pt}{$\biggl[$}\mathop{\ts\bigop\limits_{q=0,\ldots,2m\!\!\!\!\!\!\!\!\!\!\!\!\!\!\!\!\!}\La^{0,2q}T^*X^\ran}\limits_{\llcorner\qquad E^\C_0\qquad\lrcorner}\raisebox{-4pt}{$\biggr]$}\longra\Ga^\iy\raisebox{-4pt}{$\biggl[$}\mathop{\ts\bigop\limits_{q-0,\ldots,2m-1\!\!\!\!\!\!\!\!\!\!\!\!\!\!\!\!\!\!\!\!\!\!\!\!\!\!\!\!}\La^{0,2q+1}T^*X^\ran}\limits_{\llcorner\qquad E^\C_1\qquad\lrcorner}\raisebox{-4pt}{$\biggr]$}.
\label{ss3eq49}
\e
Define complex antilinear, isometric vector bundle isomorphisms on $X^\ran$:
\begin{align*}
&\he_0:E^\C_0\longra E^\C_0, & &\he_1:E^\C_1\longra E^\C_1\qquad\text{by}\\
&\he_0\vert_{\La^{0,2q}T^*X^\ran}=(-1)^q\star_{2q}, & &\he_1\vert_{\La^{0,2q+1}T^*X^\ran}=(-1)^{q+1}\star_{2q+1},
\end{align*}
and also write $\he_a$ for the actions on $\Ga^\iy(E^\C_a)$. Then \eq{ss3eq46}--\eq{ss3eq48} yield 
\e
\he_a^2=\id,\quad a=0,1, \quad\text{and}\quad
D_\C\ci\he_0=\he_1\ci D_\C.
\label{ss3eq50}
\e 
Hence $\he_a$ is a {\it real structure\/} on the complex vector bundle $E^\C_a$. Write $E_a^\R$ for the real vector subbundle of $E^\C_a$ fixed by $\he_a$ for $a=0,1$. Then $E^\C_a\cong E_a^\R\ot_\R\C$. By \eq{ss3eq50} we may write
\e
D_\R=D_\C\vert_{E_0^\R}:\Ga^\iy(E_0^\R)\longra\Ga^\iy(E_1^\R),
\label{ss3eq51}
\e
so that $D_\C=D_\R\ot_\R\id_\C$. One can now show that there are isomorphisms $E_0^\R\cong S_+$, $E_1^\R\cong S_-$ with the positive and negative real spinor bundles of $(X^\ran,g)$, which identify $D_\R$ with the positive Dirac operator $\slashed{D}_+:\Ga^\iy(S_+)\ra \Ga^\iy(S_-)$.
\end{dfn}

Now as in Theorem \ref{ss1thm2}(b) let $(X,\th)$ be an algebraic Calabi--Yau $4m$-fold, and $(X^\ran,J,g,\th)$ a corresponding differential-geometric Calabi--Yau $4m$-fold as in Remark \ref{ss1rem3}(a). As in Definition \ref{ss3def14} the Calabi--Yau structure induces a spin structure on $(X^\ran,g)$. Write $E_\bu$ for the positive Dirac operator $\slashed{D}_+:\Ga^\iy(S_+)\ra\Ga^\iy(S_-)$. Use the notation of \S\ref{ss32} for the compact manifold $X^\ran$ and elliptic operator $E_\bu$, giving an H-space $\cC$ and principal $\Z_2$-bundle $O^{E_\bu}\ra\cC$. Use the notation of \S\ref{ss33}--\S\ref{ss34} for $X$ and $(X,\th)$, giving a moduli stack $\M$ and principal $\Z_2$-bundle~$O^\om\ra\M$.

\begin{prop}
\label{ss3prop4}	
For each isomorphism class $[P]$ in \eq{ss3eq40} we have a canonical isomorphism of principal\/ $\Z_2$-bundles on $\T^\ran_{[P]}\!:$
\e
\la_{[P]}:(\De^\top)^*(O^{\om,\top})\vert_{\T^\ran_{[P]}}\longra
(\La^\cla_{[P]})^*\bigl((\pi^\cla)^*(O^{E_\bu}_P)\bigr).
\label{ss3eq52}
\e
The following commutes on $\T^\ran_{[P]}\t\T^\ran_{[Q]}$ for all isomorphism classes $[P],[Q]\!:$
\e
\begin{gathered}
\!\!\xymatrix@C=142pt@R=25pt{ *+[r]{(\De^\top\!\t\!\De^\top)^*(O^{\om,\top}\!\bt\! O^{\om,\top})\vert_{\T^\ran_{[P]}\t\T^\ran_{[Q]}}} \ar[r]_(0.6){\raisebox{-12pt}{$\st(\cdots)^*(\phi^\top)$}} \ar[d]^{\la_{[P]}\bt\la_{[Q]}} & *+[l]{(\Xi^\ran)^*\!\ci\!(\De^\top)^*(O^{\om,\top})\vert_{\T^\ran_{[P\op Q]}} } \ar@<-3ex>[d]_{\Xi^*(\la_{[P\op Q]})} \\
*+[r]{(\La^\cla_{[P]}\!\t\!\La^\cla_{[Q]})^*\bigl((\pi^\cla)^*(O^{E_\bu}_P\!\bt\!O^{E_\bu}_Q)\bigr)} \ar[r]^(0.5){\raisebox{8pt}{$\st(\cdots)^*(\psi^\cla_{P,Q})$}} & *+[l]{(\Xi^\ran)^*\!\ci\!(\La^\cla_{[P\op Q]})^*\bigl((\pi^\cla)^*(O^{E_\bu}_{P\op Q})\bigr). }}\!\!\!\!\!\!\!\!\!\!\!\!\end{gathered}
\label{ss3eq53}
\e
Here $\psi_{P,Q},\phi,\Xi,\La_{[P]}^\cla$ are as in Definitions\/ {\rm \ref{ss3def6}, \ref{ss3def8}, \ref{ss3def12},} and\/ {\rm\ref{ss3def13}.}
\end{prop}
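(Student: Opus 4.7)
The plan is to construct $\la_{[P]}$ fibrewise over points $t\in\T^\ran_{[P]}$ and then verify the direct-sum compatibility \eq{ss3eq53}. By Lemma \ref{ss3lem4} we write $t$ as a pair $(F,(s_1,s_2,\ldots))$ with $F\ra X$ an algebraic vector bundle generated by global sections; its analytification $F^\ran$ is then a holomorphic vector bundle, and together with the Hermitian metric $h_{F^\ran}$ acquires the Chern connection $\nabla_{F^\ran}$ constructed in Definition \ref{ss3def13}. These data give $[\nabla_P]\in\B_P$ representing $\La^\cla_{[P]}(t)$, and $[F]\in\M^\top$ representing $\De^\top(t)$. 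The goal is to identify the fibres of \eq{ss3eq52} at $t$ with a single natural $\Z_2$-torsor of square roots of one and the same Serre-duality isomorphism.

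For the gauge-theoretic side, note that $\Ad(P)=\mathfrak{u}(F^\ran)$ has complexification $\End(F^\ran)$. Applying the construction of Definition \ref{ss3def14} with $(\End(F^\ran),\nabla_{F^\ran})$ in place of the trivial bundle identifies the complexified twisted Dirac operator $D^{\nabla_{\Ad(P)}}\ot_\R\C$ with $\db+\db^*$ on $\bigop_q\La^{0,q}T^*X^\ran\ot\End(F^\ran)$, whose kernel and cokernel are the even and odd parts of $\Ext^*(F,F)$ by Dolbeault's theorem. Hence $\det D^{\nabla_{\Ad(P)}}\ot_\R\C\cong\det R\Hom(F,F)$ canonically, and $\det D^{\nabla_{\Ad(P)}}$ is its real form under the real structure $\he\ot(\text{Hermitian conjugation})$; an orientation on it is therefore a $\Z_2$-torsor of square roots of the complex Serre-duality isomorphism $\det R\Hom(F,F)\cong\det R\Hom(F,F)^{-1}$ induced by $\th$. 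For the algebraic side, \eq{ss3eq22} gives $\bL_{\bs\M}\vert_{[F]}\simeq R\Hom(F,F)^\vee[-1]$, so $\det\bL_{\bs\M}\vert_{[F]}\cong\det R\Hom(F,F)$ up to a canonical shift, and tracing through \eq{ss3eq29}--\eq{ss3eq31}, the isomorphism $\io^\om$ of Definition \ref{ss1def6} is precisely this same Serre-duality isomorphism. By Definition \ref{ss1def6}, $O^\om\vert_{[F]}$ is thus a $\Z_2$-torsor of square roots of the identical isomorphism, yielding $\la_{[P]}$ pointwise. Continuity in $t$ follows because the family $(F^\ran,h_{F^\ran},\nabla_{F^\ran})$ varies continuously over $\T^\ran_{[P]}$ and every identification above is natural.

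For \eq{ss3eq53}, both $\psi_{P,Q}$ from \eq{ss3eq13} and $\phi$ from \eq{ss3eq32} come from the same splitting $R\Hom(F\op F',F\op F')\cong\bigop_{i,j\in\{1,2\}}R\Hom(F_i,F_j)$ (with $F_1=F,$ $F_2=F'$). On the gauge side this is precisely \eq{ss3eq12}, with the cross-term operator $D^{\smash{\nabla_{(P\t_{\U(n)}\ov\C^n)\ot_\C (Q\t_{\U(n')}\C^{n'})}}}$ being $\C$-linear and contributing a canonical complex orientation. On the algebraic side the same decomposition is \eq{ss3eq25}, and the two off-diagonal summands $R\Hom(F,F')$ and $R\Hom(F',F)$ are Serre-dual over $\C$, yielding the identical canonical trivialisation of their combined determinant. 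Hence both sides of \eq{ss3eq53} insert the same trivialisation on the cross-term lines, so the diagram commutes. The main obstacle will be sign bookkeeping: one must verify that $\he\ot\text{conj}$ on $\End(F^\ran)$-valued $(0,*)$-forms induces on $\Ext^*(F,F)$ precisely the anti-linear involution whose associated complex pairing is Serre duality, and that the resulting sign convention simultaneously matches \eq{ss3eq29}--\eq{ss3eq31} and \eq{ss3eq25}--\eq{ss3eq27}. This is in essence a twisted Calabi--Yau Hodge theorem; it is folklore, but the signs must be tracked explicitly in order to promote a torsor equivalence to a genuine isomorphism $\la_{[P]}$ of principal $\Z_2$-bundles.
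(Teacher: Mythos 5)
Your proposal is correct and follows essentially the same route as the paper: define $\la_{[P]}$ pointwise by identifying the twisted Dirac operator with the real form of $\db+\db^*$ on $\End(F^\ran)$-valued $(0,*)$-forms (Definition \ref{ss3def14}), matching $\io^\om$ with the real structure $\he$ coming from Serre duality, and deducing \eq{ss3eq53} from the fact that $\psi_{P,Q}$ and $\phi$ both arise from the same block decomposition with the cross-terms canonically trivialized (complex orientation on the gauge side, Serre duality on the algebraic side). The sign bookkeeping you flag is exactly the part the paper also leaves implicit.
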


\begin{proof} It is enough to define $\la_{[P]}$ at each point of $\T^\ran_{[P]}$. By Lemma \ref{ss3lem4} we may write points of $\T^\ran_{[P]}$ as $[F,(s_1,s_2,\ldots)]$, for $F\ra X$ an algebraic vector bundle and $s_1,s_2,\ldots$ generating sections, and set $n=\rank F$. Write $F^\ran\ra X^\ran$ for the associated complex vector bundle, which has the structure of a holomorphic vector bundle on $(X^\ran,J)$, and define $h_{F^\ran},\nabla_{F^\ran}$ as in Definition \ref{ss3def13}. Since $[F,(s_1,s_2,\ldots)]\in\T^\ran_{[P]}$, the principal $\U(n)$-bundle associated to $(F^\ran,h_{F^\ran})$ is isomorphic to $P$, and we identify it with $P$ as $F,F^\ran$ are only given up to isomorphism. Let $\nabla_P$ be the connection on $P$ corresponding to~$\nabla_{F^\ran}$.

By Definition \ref{ss3def13}, $\La_{[P]}:[F,(s_1,s_2,\ldots)]\mapsto[\nabla_P]$, so $\La_{[P]}^\cla:[F,(s_1,s_2,\ldots)]\mapsto(\nabla_P,e)\G_P$ for some $e\in E\G_P$. Thus by Definition \ref{ss1def2} we have
\e
(\La^\cla_{[P]})^*\bigl((\pi^\cla)^*(O^{E_\bu}_P)\bigr)\vert_{[F,(s_1,s_2,\ldots)]}\cong\Or(\det(\slashed{D}_+^{\nabla_P})).
\label{ss3eq54}
\e
Definitions \ref{ss1def6} and \ref{ss3def8} give
\e
\begin{split}
&(\De^\top)^*(O^{\om,\top})\vert_{[F,(s_1,s_2,\ldots)]}=O^\om\vert_{[F]}\\
&\quad =\bigl\{o_F:\det(\bL\vert_{\bs\M})\vert_{[F]}\,{\buildrel\cong\over\longra}\,\C \;\>\text{with}\;\> o_F^*\ci o_F=i^\om\vert_{[F]}\bigr\}.
\end{split}
\label{ss3eq55}
\e
Equation \eq{ss3eq21} implies that
\begin{equation*}
\det(\bL\vert_{\bs\M})\vert_{[F]}\cong\ts\bigot\limits_{k=0}^{4m}\det(\Ext^k(F,F))^{(-1)^k}\cong\ts\bigot\limits_{k=0}^{4m}\det(H^k(F^*\ot F))^{(-1)^k},
\end{equation*}
where $\Ext^k(F,F)\cong H^k(F^*\ot F)$ as $F$ is a vector bundle. Now $H^k(F^*\ot F)$ may be computed as the $k^{\rm th}$ cohomology of the Dolbeault-type complex
\e
\!\!\xymatrix@C=28pt{ \cdots \ar[r]^(0.3){\db^{\Ad(P)}} & \Ad(P)\!\ot_\R\!\La^{0,k}T^*X \ar[r]^(0.49){\db^{\Ad(P)}} & \Ad(P)\!\ot_\R\!\La^{0,k+1}T^*X \ar[r]^(0.7){\db^{\Ad(P)}} & \cdots, }\!\!
\label{ss3eq56}
\e
noting that $\Ad(P)\ot_\R\La^{0,k}T^*X\cong F^*\ot_\C F\ot_\C\La^{0,k}T^*X$. Using adjoints $(\db^{\Ad(P)})^*$ to roll \eq{ss3eq56} up into a single elliptic operator, we see that
\e
\det(\bL\vert_{\bs\M})\vert_{[F]}\cong \det_\C(D_\C^{\Ad(P)}),
\label{ss3eq57}
\e
where $D_\C^{\Ad(P)}$ is $D_\C$ in \eq{ss3eq49} twisted by $\Ad(P),\nabla_{\Ad(P)}$.

The map $\io^\om\vert_{[F]}:\det(\bL\vert_{\bs\M})\vert_{[F]}\ra \det(\bL\vert_{\bs\M})\vert_{[F]}^*$ is induced by Serre duality \eq{ss3eq28} when $F^\bu=G^\bu=F$. The analytic counterpart of this is that the real structure $\he_0,\he_1$ on $E_0^\C,E_1^\C,D_\C$ in Definition \ref{ss3def14} induces an isomorphism
$\he:\det_\C(D_\C^{\Ad(P)})\ra\ov{\det_\C(D_\C^{\Ad(P)})}$, and hence by \eq{ss3eq57} an isomorphism $\he:\det(\bL\vert_{\bs\M})\vert_{[F]}\ra \ov{\det(\bL\vert_{\bs\M})\vert_{[F]}}$, where $\ov{\vphantom{(}\cdots}$ is the complex conjugate. 

To define \eq{ss3eq57} we took adjoints with respect to the $L^2$ Hermitian metrics, which induce a Hermitian metric on $\det(\bL\vert_{\bs\M})\vert_{[F]}$. Then one can show
\begin{equation*}
\he:\det(\bL\vert_{\bs\M})\vert_{[F]}\ra \ov{\det(\bL\vert_{\bs\M})\vert_{[F]}},\quad \io^\om\vert_{[F]}:\det(\bL\vert_{\bs\M})\vert_{[F]}\ra \det(\bL\vert_{\bs\M})\vert_{[F]}^* 
\end{equation*}
are related by composition with the isomorphism $\ov{\det(\bL\vert_{\bs\M})\vert_{[F]}}\ra \det(\bL\vert_{\bs\M})\vert_{[F]}^*$ induced by this Hermitian metric. Thus we have isomorphisms of $\Z_2$-torsors
\e
\begin{split}
&(\De^\top)^*(O^{\om,\top})\vert_{[F,(s_1,s_2,\ldots)]}{\buildrel\text{\eq{ss3eq55} and above}\over\cong} \Or\bigl[(\det(\bL\vert_{\bs\M})\vert_{[F]})^\R\bigr]\\
&\quad{\buildrel\eq{ss3eq57}\over\cong}
\Or\bigl[(\det_\C(D_\C^{\Ad(P)}))^\R\bigr]{\buildrel\eq{ss3eq51}\over\cong}\Or\bigl[\det_\R(D_\R^{\Ad(P)})\bigr]\\
&\quad{\buildrel\text{Def.~\ref{ss3def14}}\over\cong}\Or\bigl[\det_\R(\slashed{D}_+^{\Ad(P)})\bigr]{\buildrel\eq{ss3eq54}\over\cong}(\La^\cla_{[P]})^*\bigl((\pi^\cla)^*(O^{E_\bu}_P)\bigr)\vert_{[F,(s_1,s_2,\ldots)]},
\end{split}
\label{ss3eq58}
\e
taking real parts using $\he$. Define $\la_{[P]}\vert_{[F,(s_1,s_2,\ldots)]}$ to be the composition of the isomorphisms \eq{ss3eq58}. As all the operations we used to define it depend continuously on the base point, $\la_{[P]}$ is continuous, and so defines an isomorphism of principal $\Z_2$-bundles as in~\eq{ss3eq52}.

To prove \eq{ss3eq53}, we restrict to 
$\bigl([F,(s_1,s_2,\ldots)],[G,(t_1,t_2,\ldots)]\bigr)\in\T^\ran_{[P]}\t\T^\ran_{[Q]}$, and use a similar method to the above, but involving the sheaves $F,G$ and $F\op G$, and bundles with connections $(P,\nabla_P),(Q,\nabla_Q)$ and $(P\op Q,\nabla_P\op\nabla_Q)$. The important point is that  the definition of $\psi_{P,Q}\vert_{([\nabla_P],[\nabla_Q])}$ in \eq{ss3eq12} involves taking the real determinant of a complex operator $D^{\smash{\nabla_{(P\t_{\U(n)}\ov\C^n)\ot_\C (Q\t_{\U(n')}\C^{n'})}}}$, and noticing this is naturally oriented. The definition of $\phi\vert_{([F],[G])}$ in Definition \ref{ss3def8} involves noticing by \eq{ss3eq28} and \eq{ss3eq30} that $\det(\Ext^*(F,G))\ot\det(\Ext^*(G,F))$ is naturally trivial. These two ideas correspond under the construction of $\la_{[P]}$ above. We leave the details to the reader.
\end{proof}

For $\La^\cla$ as in \eq{ss3eq40}, define an isomorphism
\begin{equation*}
\la:(\De^\top)^*(O^{\om,\top})\longra
(\La^\cla)^*\bigl(\ts\coprod_{\text{iso. classes $[P]$}}(\pi^\cla)^*(O^{E_\bu}_P)\bigr)
\end{equation*}
of principal $\Z_2$-bundles on $\T^\ran$ by $\la\vert_{\T^\ran_{[P]}}=\la_{[P]}$ in \eq{ss3eq52} for all $[P]$. 

By Lemmas \ref{ss3lem1} and \ref{ss3lem2} we have weak H-principal bundles $O^{\om,\top}\ra\M^\top$ and $O^{E_\bu}\ra\cC$. Thus, $O^{\om,\top}$ and $\Ga^*(O^{E_\bu})$ are both weak H-principal-bundles on $\M^\top$. We will show they are isomorphic. We have a diagram of isomorphisms of principal $\Z_2$-bundles on~$\T^\ran$:
\e
\begin{gathered}
\xymatrix@C=170pt@R=15pt{
*+[r]{(\De^\top)^*(O^{\om,\top})} \ar[r]_(0.45)\la \ar@{.>}[d]^(0.45)\de &
*+[l]{(\La^\cla)^*\bigl(\ts\coprod\limits_{[P]}(\pi^\cla)^*(O^{E_\bu}_P)\bigr)} \ar[d]_{(\La^\cla)^*(\coprod_{[P]}\si_P^\cC)} \\
*+[r]{(\De^\top)^*\ci\Ga^*(O^{E_\bu})} & *+[l]{(\La^\cla)^*\ci(\coprod_{[P]}\Si_P^\cC)^*(O^{E_\bu}),\!} \ar[l]_(0.55){\text{$\cong$ by \eq{ss3eq41}}}
}
\end{gathered}
\label{ss3eq59}
\e
where the bottom line is by parallel translation along the homotopy in \eq{ss3eq41}. Thus composing gives an isomorphism $\de:(\De^\top)^*(O^{\om,\top})\ra (\De^\top)^*\ci\Ga^*(O^{E_\bu})$ of weak principal H-bundles on $\T^\ran$. Since $\De^\top:\T^\ran\ra\M^\top$ is a homotopy-theoretic group completion by Proposition \ref{ss3prop3}, uniqueness in Proposition \ref{ss3prop1}(a) shows there exists an isomorphism $\ga:O^{\om,\top}\ra\Ga^*(O^{E_\bu})$ on $\M^\top$, as in \eq{ss1eq7}. This proves Theorem~\ref{ss1thm2}(b). 

For Theorem \ref{ss1thm2}(c), suppose $O^{E_\bu}\ra\cC$ is trivializable. Then $O^{\om,\top}\ra\M^\top$ is trivializable by Theorem \ref{ss1thm2}(b), so both are strong H-principal $\Z_2$-bundles by Lemmas \ref{ss3lem1} and \ref{ss3lem2}. All $(\pi^\cla)^*(O^{E_\bu}_P)$ are trivializable by~\eq{ss3eq11}. 

We upgrade \eq{ss3eq59} to a diagram of strong H-principal $\Z_2$-bundles on $\T^\ran$. Here $(\De^\top)^*(O^{\om,\top}),(\De^\top)^*\ci\Ga^*(O^{E_\bu}),(\La^\cla)^*\ci(\coprod_{[P]}\Si_P^\cC)^*(O^{E_\bu})$ are strong H-principal $\Z_2$-bundles as $O^{E_\bu},O^{\om,\top}$ are. We make $(\La^\cla)^*(\coprod_{[P]}(\pi^\cla)^*(O^{E_\bu}_P))$ a strong H-principal $\Z_2$-bundle with product $(\La^\cla\t\La^\cla)^*(\coprod_{[P],[Q]}(\pi^\cla)^*(\psi_{P,Q}))$, for $\psi_{P,Q}$ as in \eq{ss3eq13}, where \eq{ss3eq1} holds by \cite[(2.12)]{JTU}. Also $\la$ is an isomorphism of strong H-principal $\Z_2$-bundles by \eq{ss3eq53}, and $(\La^\cla)^*(\coprod_{[P]}\si_P^\cC)$ is by \eq{ss3eq14}. Note too that as $O^{E_\bu}$ is trivializable, the bottom line of \eq{ss3eq59} is independent of the choice of homotopy in \eq{ss3eq41}, as in Remark~\ref{ss1rem5}.

Thus, composing in \eq{ss3eq59} gives an isomorphism $\de:(\De^\top)^*(O^{\om,\top})\ra(\De^\top)^*\ci\Ga^*(O^{E_\bu})$ of strong H-principal $\Z_2$-bundles on $\T^\ran$. Hence uniqueness up to canonical isomorphism in Proposition \ref{ss3prop1}(b) gives a canonical isomorphism $\ga:O^{\om,\top}\ra \Ga^*(O^{E_\bu})$ of strong H-principal $\Z_2$-bundles on $\M^\top$, as in \eq{ss1eq7}, with $(\De^\top)^*(\ga)=\de$. Equation \eq{ss1eq9} follows as $\ga$ is a morphism of strong H-principal $\Z_2$-bundles. The rest of Theorem \ref{ss1thm2}(c) is immediate.

\medskip

\noindent{\small\sc The Mathematical Institute, Radcliffe
Observatory Quarter, Woodstock Road, Oxford, OX2 6GG, U.K.

\noindent E-mails: {\tt yalong.cao@maths.ox.ac.uk, Jacob.Gross@maths.ox.ac.uk, 

\noindent joyce@maths.ox.ac.uk.}}

\end{document}